\newtheorem*{theorem_nonumber}{Theorem}
\newtheorem{theorem}{Theorem}[section]
\newtheorem{proposition}{Proposition}[section]
\newtheorem{corollary}[theorem]{Corollary}
\newtheorem{lemma}[theorem]{Lemma}
\newtheorem*{claim}{Claim}
\theoremstyle{definition}
\theoremstyle{remark}
\newfont{\valami}{ptmr8r scaled 1200} 
\newfont{\kisvalami}{ptmr8r scaled 1000}
\newcommand{\dto}{\backslash.}
\newcommand{\dof}{\backslash}
\newcommand{\cto}{/.}
\newcommand{\cof}{/}
\begin{document}

\author{
Konstantinos Papalamprou \\
London School of Economics
\and
Leonidas Pitsoulis  \\
Aristotle University of Thessaloniki 
}
\title{Decomposition of Binary Signed-Graphic Matroids}
\maketitle

\begin{abstract}
In this paper we employ Tutte's theory of bridges to derive a decomposition theorem for binary matroids arising from 
signed graphs. The proposed decomposition differs from previous decomposition results on matroids that have 
appeared in the literature in the sense that it is not based on $k$-sums, but rather on the operation of deletion of a cocircuit. 
Specifically, it is shown that certain minors resulting from the deletion of a cocircuit of a binary matroid will be graphic matroids
apart from exactly one that will be signed-graphic, if and only if the matroid is signed-graphic. 
\end{abstract}

\section{Introduction} \label{sec_intro}
The theory of bridges was developed by Tutte in~\cite{Tutte:1959} in order to answer fundamental questions regarding graphs
and their matroids, such as when a binary matroid is graphic. Moreover, in his latest book~\cite{Tutte:98} he expressed the belief that 
this theory is rich enough to  enjoy more theoretical applications. In this work we use the theory of bridges to derive a 
decomposition result for binary signed-graphic matroids. 
The main result is the following theorem 
which states that deletion of a cocircuit naturally decomposes a binary signed-graphic matroid into minors 
which are all graphic apart from one which is signed-graphic, while these conditions are also sufficient for a binary 
matroid to be signed-graphic. 
\begin{theorem_nonumber}[Decomposition]
Let $M$ be a connected binary matroid and $Y\in \mathcal{C}^{*}(M)$ be a non-graphic cocircuit. Then $M$ is signed-graphic if and only if:
\begin{itemize}
 \item [(i)] $Y$ is bridge-separable, and
 \item [(ii)] the $Y$-components of $M$ are all graphic apart from one which is signed-graphic.
\end{itemize}
\end{theorem_nonumber}
\noindent
This decomposition follows the lines of an analogous result for graphic matroids by Tutte 
in~\cite{Tutte:1959, Tutte:1965}, however it differs in many ways mainly due to the more complex nature of cocircuits 
in signed-graphic matroids with respect to cocircuits in graphic matroids.

Signed-graphic matroids have attracted the attention of many researchers over the past 
years (see~\cite{Pagano:1998,Slilaty:2005b,Slilaty:06,SliQin:07,Zaslavsky:1990,Zaslavsky:1991a} among others), while
recently it has also been conjectured that they may be the building blocks of a $k$-sum decomposition of dyadic and near-regular 
matroids~\cite{Whittle:05}. 
An overview of previous decomposition results regarding  signed-graphic matroids and signed graphs can be found in~\cite{SliQin:07}. 
However, the majority of the results presented in that work are mainly decomposition results for signed graphs rather than for 
signed-graphic matroids. Specifically, based on previous results of Pagano~\cite{Pagano:1998} and Gerards~\cite{Gerards:90}, 
the authors of~\cite{SliQin:07} provide two main decomposition theorems for a signed graph $\Sigma$; one theorem concerning 
the case in which the associated signed-graphic matroid $M(\Sigma)$ is binary and one theorem concerning the case
 in which $M(\Sigma)$ is quaternary. The notion of $k$-sums of signed graphs is introduced by Pagano in~\cite{Pagano:1998} 
while Gerards introduces the similar notion of $k$-splits $(k=1,2,3)$ in order to provide decomposition results for signed graphs 
whose complete lift matroids are regular (see~\cite{Zaslavsky:1991a} for a definition of the complete lift matroid of a signed graph). 
In~\cite{SliQin:07}, these notions are slightly altered and extended so that the signed-graphic matroid of the $k$-sum of two signed 
graphs $\Sigma_1$ and $\Sigma_2$ will be equal to the matroidal $k$-sum of the associated signed-graphic matroid $M(\Sigma_1)$ 
and $M(\Sigma_2)$. By using these $k$-sum operations of signed graphs and the results of~\cite{Gerards:90,Pagano:1998}, the 
above mentioned decomposition theorems regarding the class of signed graphs with binary or quaternary matroids are   proved in~\cite{SliQin:07}.

The paper is structured in the following way. Section~\ref{sec_preliminaries} presents all the necessary theory about graphs and 
matroids. Bonds in signed graphs, which play a central role in this work, are classified in this section and the connection
with the cocircuits in the corresponding signed-graphic matroid is made. 
In section~\ref{subsec_tangled_graphs} we restrict ourselves to binary signed-graphic matroids and their graphical representations, 
tangled signed graphs. 
Section~\ref{sec_decomposition} is the main section of this paper, where the necessary structural theorems which provide
the connection between a tangled signed graph and its corresponding matroid are presented. These theorems eventually
lead to the decomposition Theorem~\ref{thrm_decomposition}  at the end of this section.

\section{Preliminaries} \label{sec_preliminaries}

The main references for graphs and signed graphs are~\cite{Diestel:05,Tutte:01} and~\cite{Zaslavsky:1982,Zaslavsky:1991a}
respectively, while for matroid theory is the book of Oxley~\cite{Oxley:92}. In this section we will mention some not so  basic
operations that will be frequently used in the paper.

\subsection{Graphs and Signed Graphs} \label{subsec_signed_graphs}
By a graph $G:=(V,E)$ we mean a finite set of vertices $V$, and a multiset of
edges $E$. 
Given two distinct vertices $v,u\in V$ we  have four types of edges: $e=\{u, v \}$ is called
a \emph{link}, $e=\{v, v \}$ a \emph{loop}, $e=\{v\}$ a \emph{half edge}, 
while $e=\emptyset$ is a \emph{loose edge}. Whenever applicable, the vertices
that define an edge are called its \emph{end-vertices}. We say that an edge 
$e$ is \emph{incident} to a vertex $v$ if $v\in e$. 
Observe that the above is the ordinary definition of a graph, except that we
also allow half edges and loose edges. We will denote the set of vertices and the set of edges
of a graph $G$ by $V(G)$ and $E(G)$, respectively. 
The \emph{deletion of an edge} $e$ from $G$ is the subgraph defined as 
$G\dof e := (V(G), E(G)-e)$. \emph{Identifying}  two vertices $u$ and $v$ is the operation 
where we replace $u$ and $v$ with a new vertex $v'$ in both $V(G)$ and $E(G)$. 
The \emph{contraction of a link} $e=\{u,v\}$ is the subgraph denoted by
$G/e$ which results from $G$ by identifying $u,v$ in $G\dof e$.
The \emph{contraction of a half edge} $e=\{v\}$ or a \emph{loop} $e=\{v\}$ is the subgraph denoted by
$G/e$ which results from the removal of $\{v\}$ and all half edges and loops incident to it, while all other links
incident to $v$ become half edges at their other end vertex. Contraction of a loose edge is the same as deletion. 
The \emph{deletion of a vertex} $v$ of $G$ is defined as the deletion of all edges incident to $v$ and 
the deletion of $v$ from $V(G)$.  
A graph $G'$ is called a \emph{minor} of $G$ if it is obtained from a sequence of deletions and contractions 
of edges and deletions of vertices of $G$. For $S\subseteq E(G)$, we say that the subgraph $H$ of $G$ is the 
\emph{deletion} of $G$ \emph{to} $S$, denoted by $H=G\dto{S}$, if  $E(H)=S$ and $V(H)$ is the set of end-vertices of 
all edges in $S$. Clearly for set $S\subseteq E(G)$, $G\dto{S}$ is the graph obtained from $G\dof{E(G)-S}$ by 
deleting the isolated vertices (if any). Moreover, for $S\subseteq E(G)$, a subgraph $K$ of $G$ is 
the \emph{contraction} of $G$ \emph{to} $S$, denoted by $K=G\cto{S}$, if $K$ is the graph obtained from $G/(E(G)-S)$ 
by deleting the isolated vertices (if any).
Any partition $\{T,U\}$ of $V(G)$ for nonempty $T$ and $U$, defines a \emph{cut} of $G$ denoted by $E(T,U)\subseteq{E(G)}$
as the set of links incident to a vertex in $T$ and a vertex in $U$. A cut of the form
$E(v,V(G)-v)$ is called the \emph{star} of vertex $v$. 
There are several definitions of  connectivity in graphs that have appeared in the literature.  
In this paper we will employ the  \emph{Tutte $k$-connectivity} which we will refer to as $k$-connectivity, due to the fact that the
connectivity of a graph and its corresponding graphic matroid coincide under this definition.  
For $k\geq 1$, a \emph{$k$-separation} of a connected graph $G$ is a partition $\{A,B\}$ of the edges 
such that $\min\{|A|,|B|\}\geq{k}$ and $|V({G\!\!:\!\!A}) \cap V({G\!\!:\!\!B})|=k$, where $G\!\!:\!\!A$ is the subgraph
of $G$ induced by $A$. 
For $k\geq 2$, we say that $G$ is \emph{$k$-connected}  
if $G$ does not have an $l$-separation for $l=1,\ldots,k-1$. 
A \emph{block} is defined as a maximally 2-connected subgraph of $G$. 
Loops and half-edges are always blocks in a graph, since they are 2-connected (actually they are
infinitely connected) and they cannot be part of a 2-connected component because they induce
a 1-separation. 
Finally we define the operation of \emph{reversing}, which is also known  as twisting (see
\cite{Oxley:92}), as follows. Let $G_1$ and
$G_2$ be two disjoint graphs with at least two vertices $(u_1,v_1)$ and
$(u_2,v_2)$, respectively. Let $G$ be the graph obtained from $G_1$ and $G_2$  
by identifying $u_1$ with $u_2$ to a vertex $u\in{V(G)}$ and $v_1$ with $v_2$
to a vertex  $v\in{V(G)}$. If we identify, instead, $u_1$ with $v_2$ and $v_1$ with $u_2$ then we obtain
a graph $G'$ which is called  a \emph{reversed} graph of $G$ \emph{about}
$\{u,v\}$. The subgraphs $G_1$ and $G_2$ of $G$ and $G'$ are called the \emph{reversing parts} of the reversing.

A \emph{signed graph} is defined as $\Sigma := (G,\sigma)$ where $G$ is a graph called the \emph{underlying graph} and
$\sigma$ is a sign function $\sigma:E(G)\rightarrow \{\pm 1\}$, where 
$\sigma(e) =-1$ if $e$ is a half edge and  $\sigma(e) =+1$ if $e$ is a loose edge. 
Therefore a signed graph is a graph where the  edges are labelled as  positive or  negative, while all the half edges
are negative and all the loose edges are positive. 
We denote by $V(\Sigma)$ and $E(\Sigma)$ the vertex set and edge set of a signed graph $\Sigma$, respectively. 
All operations on signed graphs are defined through a corresponding operation on the underlying graph and
the sign function. In the following definitions assume that we have a signed graph $\Sigma=(G,\sigma)$. 
The operation of \emph{switching} at a vertex $v$ results
in a new signed graph $(G,\bar{\sigma})$  where $\bar{\sigma}(e) := - \sigma (e)$ for each link $e$ incident
to $v$, while $\bar{\sigma}(e) := \sigma (e)$ for all other edges. \emph{Deletion} of an edge $e$ is defined as
$\Sigma \dof e := (G\dof e, \sigma)$. The \emph{contraction} of an edge $e$ consists of three cases: 
\begin{enumerate}
\item if $e$ is a half edge, positive loop  or a positive link, then $\Sigma / e := (G/e, \sigma)$. 
\item if $e$ is a negative loop, then $\Sigma / e := (G'/e', \sigma)$ where $G'$ is the graph obtained from
      $G$ by replacing the loop $e$ with a half edge $e'$. 
\item if $e$ is a negative link, then $\Sigma / e := (G/e, \bar{\sigma})$  where $\bar{\sigma}$ is a 
      switching at either one of the end vertices of $e$.
\end{enumerate} 
 The \emph{sign of a cycle} is the product of the signs of its edges, so we have a 
\emph{positive cycle} if the number of negative edges in the cycle is  even, otherwise the cycle is a \emph{negative cycle}.
Both negative loops and  half-edges are negative cycles. A signed graph is 
called \emph{balanced}  if it contains no negative  cycles.
A vertex $v\in V(\Sigma)$ is called a \emph{balancing vertex} if $\Sigma \dof v$ is balanced.

\subsection{Signed-Graphic Matroids} \label{subsec_sign_graph_matrds}
We assume that the reader is familiar with matroid theory as in~\cite{Oxley:92}, and in particular with the
circuit axiomatic definition of a matroid and the notions of duality, connectivity, 
representability and minors. Given a matrix $A$ and a graph $G$, $M[A]$ and
$M(G)$ denote the vector and graphic matroids respectively. For a matroid $M$
we denote by  $E(M)$ be the ground set, $\mathcal{C}(M)$ the family of circuits 
while $M^*$ is the dual matroid of $M$. The prefix `co-' dualizes the term mentioned and the asterisk dualizes the symbol
used. 

The following definition  for the matroid of a signed graph or \emph{signed-graphic matroid} is used in this work. 
\begin{theorem}[Zaslavsky~\cite{Zaslavsky:1982}] \label{th_sgg2}
Given a signed graph $\Sigma$ let $\mathcal{C} \subseteq 2^{E(\Sigma)}$ be the family of edge sets 
inducing a subgraph in $\Sigma$ which is either:
\begin{itemize}
\item[(i)] a positive cycle, or
\item[(ii)]  two vertex-disjoint negative cycles connected by a path which has no common vertex with the cycles apart from 
its end-vertices, or 
\item[(iii)] two negative cycles which have exactly one common vertex.
\end{itemize}
Then $M(\Sigma)=(\mathcal{C}, E(\Sigma))$ is a matroid on $E(\Sigma)$ with circuit family $\mathcal{C}$. 
\end{theorem}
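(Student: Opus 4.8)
The plan is to realize $M(\Sigma)$ as the vector matroid of an explicit matrix, so that the matroid axioms come for free and only the identification of the circuit family remains. Working over $\mathbb{Q}$, let $A=A(\Sigma)$ be the $V(\Sigma)\times E(\Sigma)$ matrix whose column for a positive link $\{u,v\}$ is $\mathbf{e}_u-\mathbf{e}_v$, for a negative link $\{u,v\}$ is $\mathbf{e}_u+\mathbf{e}_v$, for a half edge or a negative loop at $v$ is $\mathbf{e}_v$, and for a positive loop or a loose edge is the zero column. I claim $M(\Sigma)=M[A]$; as $M[A]$ is a matroid this proves $M(\Sigma)$ is one and has circuit family $\mathcal{C}$.

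The key step is the rank formula: for every $S\subseteq E(\Sigma)$, $r_{M[A]}(S)=|V(S)|-b(S)$, where $V(S)$ is the set of end-vertices of the edges in $S$ and $b(S)$ is the number of balanced connected components of $\Sigma\dto{S}$. It suffices to prove this one component at a time. Switching at a vertex negates a row of $A$, hence changes neither $M[A]$ nor the balance of a component, and a balanced connected signed graph is switching-equivalent to an all-positive one; so a balanced component may be taken all-positive, in which case the relevant columns form an ordinary graph incidence matrix of rank $|V|-1$. For an unbalanced connected component, a left-kernel vector $y$ must satisfy $y_u=y_v$ across every positive link, $y_u=-y_v$ across every negative link, and $y_v=0$ at every half edge or negative loop; traversing a negative cycle forces $y_v=-y_v$, so $y$ vanishes on that cycle and, by connectedness, everywhere, and the rank is $|V|$.

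From the rank formula, $S$ is independent in $M[A]$ iff every connected component $H$ of $\Sigma\dto{S}$ satisfies $|E(H)|=|V(H)|$ when $H$ is unbalanced and $|E(H)|=|V(H)|-1$ when $H$ is balanced; equivalently, every component has at most one cycle and any cycle present is negative. Hence the circuits are the edge-minimal subgraphs violating this. Such a subgraph $\Sigma\dto{S}$ is connected, since rank and cardinality are both additive over the components of a vertex-disjoint union. If $|E|\le|V|$ then $|E|=|V|$, $\Sigma\dto{S}$ is unicyclic with a positive cycle, and minimality (which forbids degree-$1$ vertices) forces $\Sigma\dto{S}$ to be that cycle — case (i). Otherwise $|E|=|V|+1$ and, again with no degree-$1$ vertices, $\Sigma\dto{S}$ is (by the standard description of connected graphs of cycle rank $2$ with minimum degree $\ge 2$) a theta graph or a handcuff. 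A theta graph with internally disjoint $u$--$v$ paths $P_1,P_2,P_3$ is impossible: the signs of its three cycles $P_i\cup P_j$ multiply to $+1$, so one of them, say $P_2\cup P_3$, is positive, and then $S$ minus any edge of $P_1$ still contains this positive cycle, hence is dependent, contradicting minimality. So $\Sigma\dto{S}$ is a handcuff — two cycles $C_1,C_2$ joined by a possibly trivial path — and deleting an edge of $C_i$ leaves $C_{3-i}$ with a pendant path, dependent unless $C_{3-i}$ is negative; thus both $C_1,C_2$ are negative, which is case (ii) when the joining path is nontrivial and case (iii) when it is trivial. Conversely each configuration in (i)--(iii) is dependent by the rank formula, and a short check shows all of its proper subsets independent, so it is a circuit; this gives $M(\Sigma)=M[A]$.

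The main obstacle is the rank lemma, and inside it the unbalanced case, where a negative cycle must be shown to supply the extra unit of rank — the one point at which signed (rather than plain graphic) structure enters — via the left-kernel argument above. Everything after is the combinatorial classification of edge-minimal dependent subgraphs, whose only non-routine ingredient is the parity identity killing theta graphs. An alternative is to verify the circuit axioms for $\mathcal{C}$ directly: (C1) and (C2) are straightforward, but the circuit-elimination axiom (C3) needs a fairly long case analysis over the types of two circuits meeting in a common edge, which the matrix argument circumvents.
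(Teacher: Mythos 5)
The paper offers no proof of this statement at all: it is quoted verbatim as a theorem of Zaslavsky~\cite{Zaslavsky:1982}, so there is nothing internal to compare your argument against. On its own merits your proof is correct and is essentially the standard one: represent $M(\Sigma)$ as the vector matroid of the signed incidence matrix over $\mathbb{Q}$, establish the rank formula $r(S)=|V(S)|-b(S)$ component by component (switching a balanced component to all-positive, and using the left-kernel/negative-cycle argument for the unbalanced case), and then classify the edge-minimal dependent subgraphs, with the parity identity on the three cycles of a theta graph doing the one piece of real work in excluding thetas. This buys more than the bare statement (representability over any field of characteristic $\neq 2$, hence in particular the rank function), whereas a direct verification of the circuit axioms for $\mathcal{C}$ would be self-contained but, as you note, requires a long elimination-axiom case analysis. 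Three small points worth a sentence each if you write this up: justify $|S|\le |V(S)|+1$ for a circuit $S$ (immediate from $|S|=r(S)+1\le |V(S)|+1$, which you implicitly use when jumping to ``cycle rank $2$''); make explicit that half edges and negative loops are to be counted as negative cycles and positive loops as positive cycles throughout the graph-theoretic classification, since the ``connected, minimum degree $\ge 2$, cycle rank $2$'' trichotomy has degenerate instances there; and either cite or prove the switching lemma that a balanced connected signed graph is switching-equivalent to an all-positive one. None of these is a gap; the proof stands.
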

\noindent
The subgraphs of $\Sigma$ induced by the edges corresponding to a circuit of $M(\Sigma)$ are called the 
\emph{circuits} of $\Sigma$. The circuits 
of $\Sigma$ described by (ii) and (iii) of Theorem~\ref{th_sgg2} are also called {\em handcuffs} of Type I and Type II, 
respectively (see Figure~\ref{fig_circuits_signed_graphs}). 
\begin{figure}[hbtp]
\begin{center}
\mbox{
\subfigure[positive cycle]
{
\includegraphics*[scale=0.23]{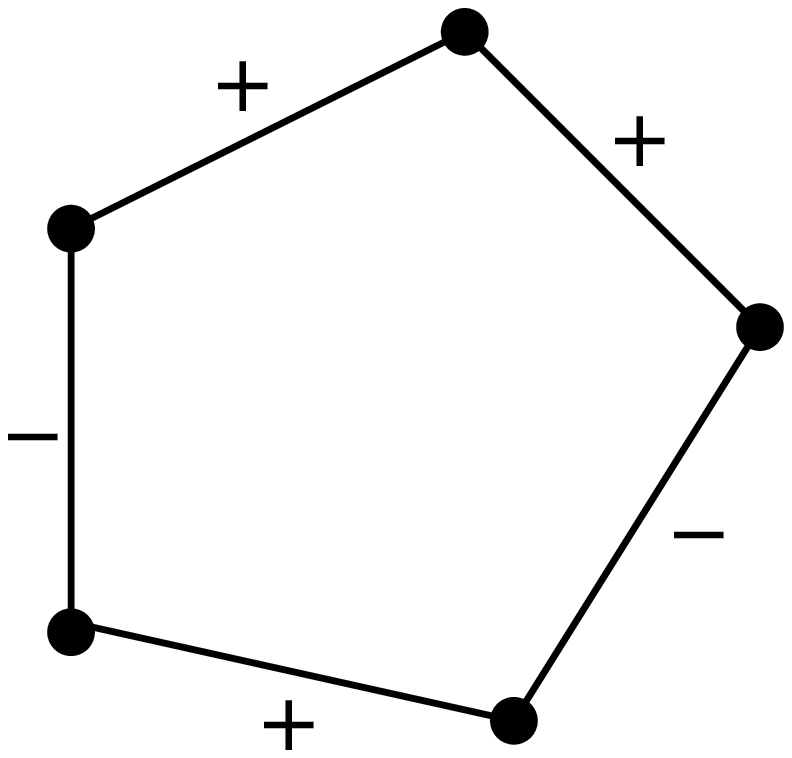}
}\quad
\subfigure[Type I handcuff]
{
\includegraphics*[scale=0.23]{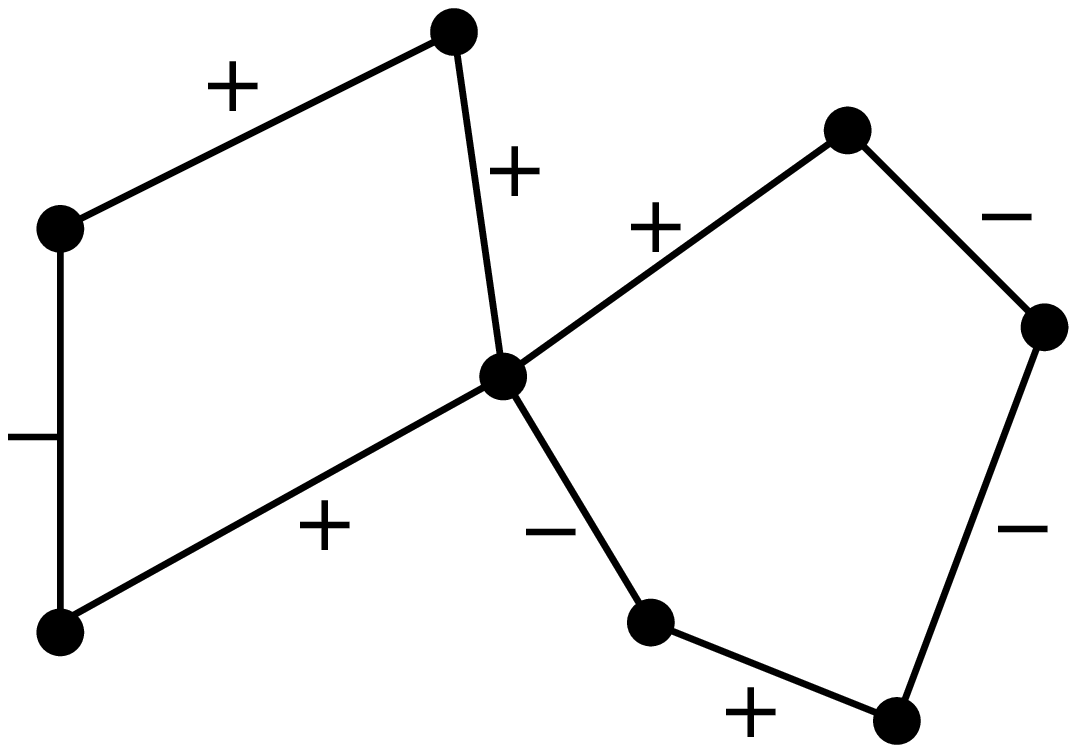}
}\quad
\subfigure[Type II handcuff]
{
\includegraphics*[scale=0.23]{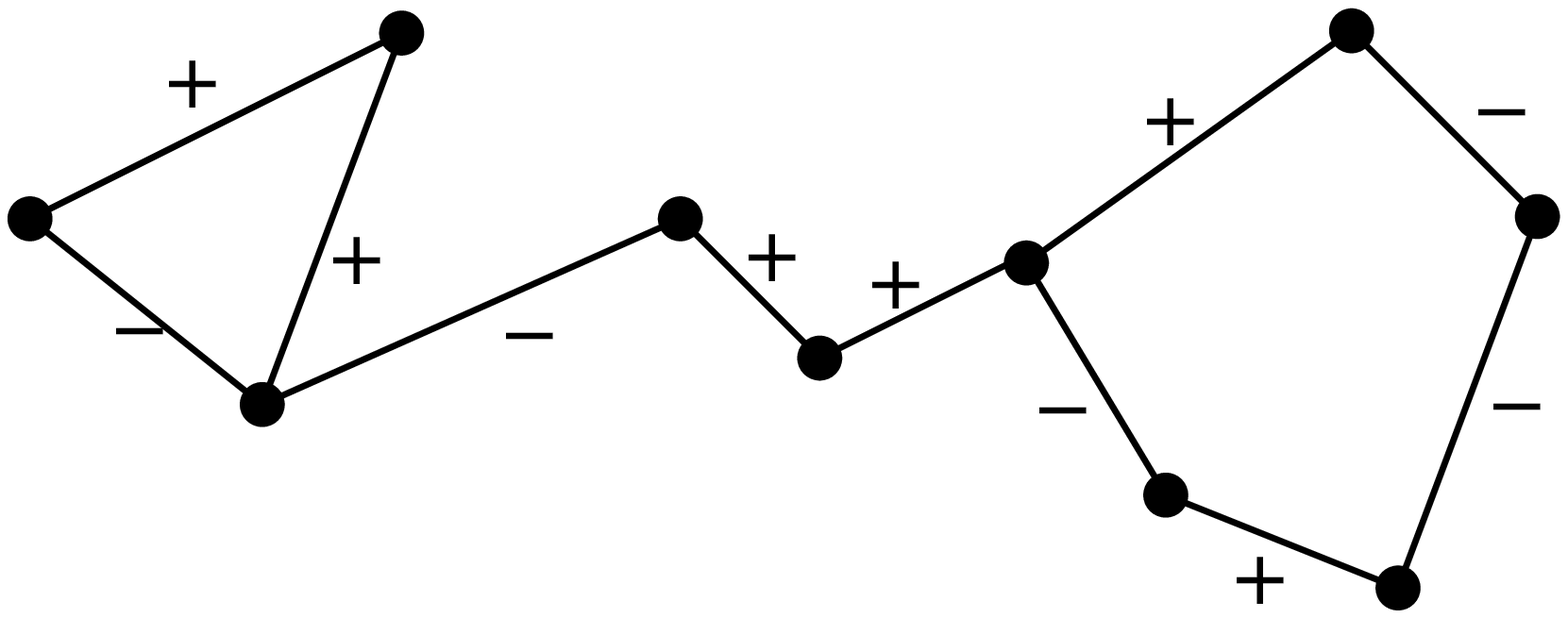}
}}
\end{center}
\caption{Circuits in a signed graph $\Sigma$}
\label{fig_circuits_signed_graphs}
\end{figure}
With the following
theorem we  characterize the sets of edges in a signed graph $\Sigma$ which correspond to circuits
of $M^{*}(\Sigma)$. 
\begin{theorem}[Zaslavsky~\cite{Zaslavsky:1982}] \label{thrm_bonds}
Given a signed graph $\Sigma$ and its corresponding matroid $M(\Sigma)$, $Y\subseteq E(\Sigma)$ is
a cocircuit of $M(\Sigma)$ if and only if $Y$ is a minimal set of edges 
whose deletion increases the number of balanced components of $\Sigma$.
\end{theorem}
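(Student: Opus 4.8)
The plan is to translate the condition on balanced components into the language of the rank function of $M(\Sigma)$ and then invoke the standard description of cocircuits. Recall the elementary matroid fact that, for any matroid $M$ on ground set $E$, a set $Y\subseteq E$ is a cocircuit if and only if $Y$ is minimal subject to $r_M(E-Y)<r(M)$: the complements of cocircuits are exactly the hyperplanes, i.e.\ the flats of rank $r(M)-1$, and one checks directly that $E-Y$ is a hyperplane precisely when $Y$ is minimal with $r_M(E-Y)<r(M)$. Thus it suffices to show that, for every $Y\subseteq E(\Sigma)$, the rank drops, $r_{M(\Sigma)}(E(\Sigma)-Y)<r(M(\Sigma))$, if and only if the deletion of $Y$ strictly increases the number of balanced components, after which the two minimality conditions will coincide.

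The crux of the argument is the rank formula $r(M(\Sigma))=|V(\Sigma)|-b(\Sigma)$, where $b(\Sigma)$ is the number of balanced components of $\Sigma$, an isolated vertex being counted as a trivially balanced component. I would derive this from the circuit characterisation of Theorem~\ref{th_sgg2}. A set $F\subseteq E(\Sigma)$ is independent exactly when it contains none of the three circuit types; equivalently, every connected component of $\Sigma\dto F$ carries at most one cycle, and any such cycle is negative. Hence, within a single connected component $\Sigma_i$ on $n_i$ vertices, a maximal independent set is a spanning tree when $\Sigma_i$ is balanced (no cycle may be used, as every cycle there is positive), giving rank $n_i-1$, and a spanning tree together with one further edge closing a negative cycle when $\Sigma_i$ is unbalanced, giving rank $n_i$. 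Since circuits lie within a single component, $M(\Sigma)$ is the direct sum of the $M(\Sigma_i)$, and summing over components yields $r(M(\Sigma))=\sum_i n_i-b(\Sigma)=|V(\Sigma)|-b(\Sigma)$, the same formula applying verbatim to any spanning subgraph.

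With the rank formula in hand, I would use that deletion in the matroid matches deletion in the signed graph, $M(\Sigma)\dof Y=M(\Sigma\dof Y)$, which follows immediately from Theorem~\ref{th_sgg2}, the surviving circuits being precisely the subgraphs of the three types contained in $E(\Sigma)-Y$. As deletion of edges leaves the vertex set unchanged, this gives $r_{M(\Sigma)}(E(\Sigma)-Y)=|V(\Sigma)|-b(\Sigma\dof Y)$. Comparing with $r(M(\Sigma))=|V(\Sigma)|-b(\Sigma)$ shows that $r_{M(\Sigma)}(E(\Sigma)-Y)<r(M(\Sigma))$ holds if and only if $b(\Sigma\dof Y)>b(\Sigma)$, that is, if and only if deleting $Y$ increases the number of balanced components. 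Because this equivalence holds for every $Y$ and is phrased purely through set inclusion, $Y$ is minimal subject to dropping the rank exactly when it is minimal subject to raising the number of balanced components; combined with the cocircuit description of the first paragraph, this closes the proof.

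The main obstacle is the independent-set analysis underpinning the rank formula: one must verify that no independent set can carry two independent cycles in one component, which is exactly where the handcuff circuits of types (ii) and (iii) of Theorem~\ref{th_sgg2} enter. A secondary subtlety is the bookkeeping convention that isolated vertices count as balanced components, which is needed so that the rank formula stays valid for the subgraphs $\Sigma\dof Y$, in which vertices may become isolated. Once these points are settled, the monotonicity $b(\Sigma\dof Y)\ge b(\Sigma)$ is automatic from $r_{M(\Sigma)}(E(\Sigma)-Y)\le r(M(\Sigma))$, and the matching of the two minimality notions is immediate.
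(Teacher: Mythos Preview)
The paper does not prove this theorem; it is stated with attribution to Zaslavsky~\cite{Zaslavsky:1982} and used as a black box throughout. So there is no ``paper's own proof'' to compare against.

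Your argument is correct and is essentially the standard route to this result. The key input is Zaslavsky's rank formula $r(M(\Sigma))=|V(\Sigma)|-b(\Sigma)$, and your derivation of it from the circuit description in Theorem~\ref{th_sgg2} is sound. The one place where a reader might want an extra sentence is your claim that an unbalanced component $\Sigma_i$ on $n_i$ vertices admits an independent set of size $n_i$: you assert one can take a spanning tree together with an edge closing a negative cycle, but it is worth noting why such an edge exists. After switching so that all tree edges are positive, the fundamental cycle of a non-tree edge $e$ has sign equal to the sign of $e$; if every fundamental cycle were positive then every cycle (being a symmetric difference of fundamental cycles) would be positive, contradicting unbalancedness. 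The case of two negative cycles sharing more than one vertex, which you do not mention explicitly, is handled by observing that their union contains a theta subgraph whose three cycles have signs multiplying to $+1$, forcing a positive cycle and hence a circuit. With these small additions the independent-set analysis is complete, and the rest of your argument (matching the two minimality conditions via the rank drop) goes through without issue.
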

\noindent
The sets of edges defined in Theorem~\ref{thrm_bonds} are called \emph{bonds} of 
a signed graph.
In analogy with the different types of circuits a signed-graphic matroid has, 
bonds can also be classified into different types according to the signed
graph obtained upon their deletion. Specifically for a 
given connected and unbalanced signed graph $\Sigma$, the deletion of a bond $Y$ 
results to a signed graph $\Sigma\dof{Y}$  with exactly one
balanced component due to the minimality of $Y$. 
Thus,  $\Sigma\dof{Y}$ may be a balanced connected graph
in which case we call $Y$ a \emph{balancing  bond} or it may consist of one balanced component and some unbalanced
components. In the latter case, if the balanced component is a vertex,
i.e. the balanced component is empty of edges, then we say that
$Y$ is a \emph{star bond}, while in the case that the balanced component is not empty of edges  $Y$
can be either an \emph{unbalanced bond} or a \emph{double bond}.
Specifically, if the balanced component is not empty of edges and there is no edge in $Y$ such that both of its end-vertices are vertices of the balanced component, then $Y$ is an unbalanced bond. On the other hand, if there exists at least one edge of $Y$ whose both end-vertices are vertices of the balanced component then $Y$ is a double bond.
A further classification of bonds is based on whether the  matroid
$M(\Sigma)\dof{Y}$ is connected or not for some $Y\in\mathcal{C}(M^{*}(\Sigma))$. 
In the case that $M(\Sigma)\dof{Y}$ is disconnected we call $Y$ as \emph{separating bond}
of $\Sigma$, otherwise we say that $Y$ is a \emph{nonseparating bond}. 

In \cite{Zaslavsky:1982,Zaslavsky:1991a}
the edge sets of a signed graph which correspond to elementary separators in the associated signed-graphic
matroid are determined. 
Before we present this result in Theorem~\ref{th_Zasl11} we have to provide some necessary definitions.    
An \emph{inner block}  of $\Sigma$ is a block that 
is unbalanced or lies on the path between two unbalanced 
blocks. Any other block is called \emph{outer}. 
The \emph{core} of $\Sigma$  is the union of all inner blocks. 
A \emph{B-necklace} is a special type of 2-connected unbalanced signed graph, 
which is composed of maximally 2-connected balanced 
subgraphs $\Sigma_i$ joined in a cyclic fashion as illustrated in Figure~\ref{fig_necklace}. Note that in 
Figure~\ref{fig_necklace} as well as other figures that follow, a circle depicts a connected graph while two homocentric circles depict a block, 
where in each case a positive (negative) sign is used to indicate whether the connected or 2-connected
component is balanced (unbalanced). 
Observe that any
negative cycle in a B-necklace has to contain at least one edge from each $\Sigma_i$. 
\begin{figure}[h] 
\begin{center}
\centering
\psfrag{S1}{\footnotesize $\Sigma_1$}
\psfrag{S2}{\footnotesize $\Sigma_2$}
\psfrag{S3}{\footnotesize $\Sigma_3$}
\psfrag{S4}{\footnotesize $\Sigma_4$}
\psfrag{S5}{\footnotesize $\Sigma_5$}
\psfrag{Si}{\footnotesize $\Sigma_i$}
\psfrag{Sn}{\footnotesize $\Sigma_n$}
\includegraphics*[scale=0.3]{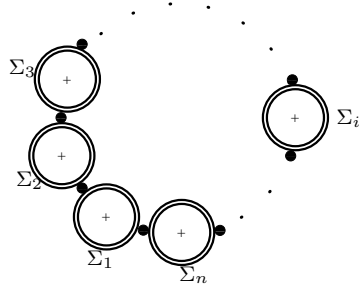}
\end{center}
\caption{A B-necklace}
\label{fig_necklace}
\end{figure}
\begin{theorem}[Zaslavsky~\cite{Zaslavsky:1991a}] \label{th_Zasl11}
Let $\Sigma$ be a connected signed graph. The elementary separators of 
$M(\Sigma)$ are the edge sets of each outer block and the core,  
except that when the core is a B-necklace  each block in the
B-necklace is also an elementary separator. 
\end{theorem}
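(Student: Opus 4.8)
The plan is to read off the connected components of $M(\Sigma)$ --- whose ground sets are exactly the elementary separators --- from the circuit description in Theorem~\ref{th_sgg2}, using the standard fact that two elements lie in the same component precisely when they are joined by a chain of circuits $D_1,\dots,D_k$ with $D_j\cap D_{j+1}\neq\emptyset$. The first step is a \emph{localisation of circuits}. Each of the three circuit types induces a connected subgraph of $\Sigma$: a positive cycle, being $2$-connected, lies inside a single block of $\Sigma$; and a handcuff of either type is built from two negative cycles $N_1,N_2$, each $2$-connected and hence contained in a single block, with that block unbalanced because it contains $N_i$. In a Type~I handcuff the connecting path runs through the blocks forming the path between the blocks of $N_1$ and $N_2$ in the block--cut tree of $\Sigma$, all of which are therefore inner blocks; in a Type~II handcuff $N_1$ and $N_2$ already lie in a common block. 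Hence every handcuff is contained in the core, and every circuit of $M(\Sigma)$ lies either inside a single block or inside the core.

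Since every edge lies in exactly one block, no circuit meets both an outer block $B$ and an edge outside $B$ (a circuit inside a block stays in that block, a handcuff stays in the core, and the core is disjoint from $B$), and likewise no circuit meets both the core and its complement. Therefore $E(B)$ for each outer block $B$, and the edge set of the core, are unions of elementary separators, so the elementary separators of $M(\Sigma)$ are those of $M(\Sigma)|_{E(B)}$ over the outer blocks $B$ together with those of $M(\Sigma)|_{E(\mathrm{core})}$. An outer block $B$ is balanced (every unbalanced block is inner), so its only circuits are positive cycles, i.e. cycles of the underlying graph; thus $M(\Sigma)|_{E(B)}$ is the cycle matroid of the $2$-connected graph $B$, which is connected, and $E(B)$ is a single elementary separator (when $|E(B)|=1$ the edge is a coloop or a loop, again one separator). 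It remains to split the edge set of the core.

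Suppose first that the core is a B-necklace with constituent balanced blocks $\Sigma_1,\dots,\Sigma_n$, and let $W$ be the set of vertices at which consecutive $\Sigma_i$ are joined, so $|W|\geq 2$ and each $\Sigma_i$ contains exactly two vertices of $W$. Since each $\Sigma_i$ is balanced, all paths inside $\Sigma_i$ between its two vertices of $W$ have the same sign, so any cycle not contained in a single $\Sigma_i$ runs once around the necklace; as the necklace is unbalanced, the common sign of these around-cycles is negative. Hence every negative cycle of the core contains $W$, so any two of them are neither vertex-disjoint nor meet in exactly one vertex, and the core contains no handcuff. Every circuit of $M(\Sigma)$ inside the core is therefore a positive cycle lying in a single $\Sigma_i$, and since each $\Sigma_i$ is a $2$-connected balanced graph, $M(\Sigma)|_{E(\Sigma_i)}$ is the connected cycle matroid of $\Sigma_i$; thus the elementary separators inside the core are exactly $E(\Sigma_1),\dots,E(\Sigma_n)$. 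Suppose instead that the core is not a B-necklace; the claim is that its whole edge set is a single separator. If the core is a single block it is unbalanced and not a B-necklace, and we invoke the fact that such a $2$-connected signed graph has connected matroid. Otherwise the core has at least two unbalanced inner blocks, and one uses, for each pair $A,A'$ of unbalanced inner blocks, a handcuff consisting of a negative cycle in $A$, a negative cycle in $A'$, and a connecting path through all the inner blocks between them; each such handcuff is a circuit, so it lies in one separator meeting both $E(A)$ and $E(A')$, whence all unbalanced inner blocks share a common separator $s$. Within any inner block all its edges lie in a single separator --- for a balanced block by the outer-block argument, for an unbalanced block that is not a B-necklace by the fact just cited, and for an unbalanced B-necklace block because its around-cycles (which appear in the handcuffs above and meet every constituent $\Sigma_i$) force all the $E(\Sigma_i)$ into one separator. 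Since every inner block is unbalanced or lies on a path between two unbalanced ones, every inner block is met by one of the handcuffs above, so its single separator coincides with $s$; hence the edge set of the core equals $s$, a single elementary separator, completing the proof.

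The main obstacle is this last case: establishing that ``the core is not a B-necklace'' genuinely provides the handcuffs used above, and in particular the companion fact that a $2$-connected unbalanced signed graph that is not a B-necklace has connected matroid. One must exploit that the B-necklace is the \emph{unique} obstruction: in any other $2$-connected unbalanced signed graph one can find two negative cycles in sufficiently general position --- vertex-disjoint, or meeting in at most one vertex --- to serve as the two ends of a handcuff, onto which an arbitrary edge can then be hooked through positive cycles. I would prove this by induction along an ear decomposition of the underlying $2$-connected graph, or by a minimal-counterexample argument on its $2$-separations, showing that the only configuration in which every negative cycle is forced through a common pair of vertices is the rigid cyclic chain of balanced pieces defining a B-necklace.
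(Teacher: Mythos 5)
The paper offers no proof of this statement (it is quoted from Zaslavsky), so there is no internal argument to compare against; judging your proposal on its own terms, the reduction framework is correct. The localisation of circuits (a positive cycle lies in a single block; a handcuff lies in the core because its two negative cycles sit in unbalanced blocks and the connecting path, or the shared vertex, only traverses blocks on the path between them in the block--cut tree), the treatment of outer blocks as connected cycle matroids, and the B-necklace case (any cycle not confined to one bead traverses the whole necklace and is negative, so any two negative cycles share all junction vertices and no handcuff exists) are all sound. This part is the routine half of the theorem.

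The genuine gap is exactly the step you defer to the final paragraph: that a $2$-connected unbalanced signed graph which is not a B-necklace has a connected frame matroid. This is not a ``companion fact'' to be invoked --- once the block reduction is done, it \emph{is} the theorem, and ``induction along an ear decomposition'' or ``minimal counterexample on $2$-separations'' are names for a proof rather than a proof. The claim is also more delicate than your heuristic (``find two negative cycles meeting in at most one vertex'') suggests. Consider $K_4$ with a single negative edge $e=uv$: every negative cycle contains $e$, so any two negative cycles share both $u$ and $v$, there are no handcuffs, $e$ lies in no circuit and is a coloop, and the matroid is disconnected --- yet the graph only qualifies as a B-necklace under the degenerate reading in which one bead is the single edge $e$ (equivalently, $u$ is a balancing vertex). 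So the ``necklace is the unique obstruction'' statement is only true once necklaces with single-edge and otherwise degenerate beads are admitted, and any ear-decomposition induction must carry precisely this case analysis. Until that lemma is actually established, the hard direction of the theorem remains unproved.
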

\noindent
Let $B$ be an elementary separator of $M(\Sigma)$. The subgraph $\Sigma\dto{B}$ of $\Sigma$ is called a \emph{separate} of $\Sigma$.

Given a matroid $M$ and some set $X\subseteq E(M)$ the \emph{deletion} and \emph{contraction} of $X$ from $M$ will be denoted
by $M\dof X$ and $M/X$ respectively.  If $N$ is a minor of $M$, that is  $N=M\dof X / Y$ for disjoint $X,Y\subseteq E(M)$, we will write $M \succeq N$. 
For a matter of convenience in the analysis that will follow
we also employ the complement notions of deletion and contraction, 
that is the \emph{deletion to} a set $X\subseteq E(M)$ is defined as 
\[
M\dto X := M\dof (E(M) - X), 
\]
while the \emph{contraction to} a set $X\subseteq E(M)$ is defined as 
\[
M\cto X := M/(E(M) - X).
\]
There is an equivalence of the aforementioned matroid operations with respect to the associated 
signed-graphic operations of deletion and contraction defined in Section~\ref{subsec_signed_graphs}, 
as indicated by Theorem~\ref{th_minsig}. 
\begin{theorem}[Zaslavsky~\cite{Zaslavsky:1982}] \label{th_minsig}
Let $\Sigma$ be a signed graph and $S\subseteq E(\Sigma)$. Then 
$M(\Sigma\dof{S})=M(\Sigma)\dof{S}$ and $M(\Sigma/S)=M(\Sigma)/S$.
\end{theorem}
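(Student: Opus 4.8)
The plan is to verify both identities at the level of circuit families, using the description of $\mathcal{C}(M(\Sigma))$ supplied by Theorem~\ref{th_sgg2}. The deletion identity is essentially immediate: for $C\subseteq E(\Sigma)-S$, whether the subgraph of $\Sigma$ induced by $C$ is a positive cycle, a Type~I handcuff or a Type~II handcuff depends only on the edges in $C$, their end-vertices and their signs, and all of these data are identical in $\Sigma$ and in $\Sigma\setminus S$. Hence $\mathcal{C}(M(\Sigma\setminus S))=\{C\in\mathcal{C}(M(\Sigma)):C\cap S=\emptyset\}=\mathcal{C}(M(\Sigma)\setminus S)$, so $M(\Sigma\setminus S)=M(\Sigma)\setminus S$.

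For the contraction identity I would first reduce to a single edge: contraction of a set of edges in a signed graph is carried out one edge at a time, and $(M/e)/X=M/(\{e\}\cup X)$ holds for matroids, so an induction on $|S|$ reduces the claim to proving $M(\Sigma/e)=M(\Sigma)/e$ for every $e\in E(\Sigma)$. Before the case split I would record the auxiliary fact that \emph{switching leaves $M(\Sigma)$ unchanged}: switching at a vertex $v$ flips the signs of exactly the links incident to $v$, any cycle meets $v$ in zero or two edges and so keeps its sign, and the combinatorial shape of the two handcuff types is unaffected; thus switching is the identity map on $\mathcal{C}(M(\Sigma))$. This turns a negative link into a positive one, collapsing case~(3) of the contraction definition into case~(1), so only three genuinely distinct edge types remain: a positive loop, an unbalanced edge (a negative loop or a half edge), and a positive link.

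If $e$ is a positive loop, then $\{e\}$ is a positive cycle, so $e$ is a loop of $M(\Sigma)$ and $M(\Sigma)/e=M(\Sigma)\setminus e$; since contracting a positive loop in $\Sigma$ amounts to deleting it, both sides equal $M(\Sigma\setminus e)$. If $e$ is an unbalanced edge, then $\{e\}$ is not a circuit, so $e$ is a non-loop element of $M(\Sigma)$, and a direct comparison — using that the circuits of $M(\Sigma)/e$ are the minimal nonempty members of $\{C-e:C\in\mathcal{C}(M(\Sigma))\}$, which here arise from the handcuffs of $\Sigma$ one of whose negative cycles passes through the end-vertex of $e$ — shows that these coincide with the positive cycles and Type~II handcuffs that Theorem~\ref{th_sgg2} produces on $\Sigma/e$. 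The substantive case is the positive link $e=\{u,v\}$, where $\Sigma/e$ identifies $u$ with $v$: I would run through each circuit type of $M(\Sigma)$ and track its image after deleting $e$ and identifying $u$ and $v$ — a positive cycle through $e$ shortens (its other edge becoming a loop if the cycle had length two); a Type~I handcuff whose connecting path contains $e$ shortens, or has the path collapse entirely to yield a Type~II handcuff; a handcuff with $e$ on one of its negative cycles has that cycle shrink (possibly turning the whole configuration into a positive cycle); and conversely every circuit of $\Sigma/e$ lifts back, up to the edge $e$, to a circuit of $\Sigma$. Checking that these correspondences are inclusion-minimal and exhaustive on both sides gives $\mathcal{C}(M(\Sigma/e))=\mathcal{C}(M(\Sigma)/e)$, and composition handles arbitrary $S$.

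I expect the exhaustive bookkeeping in the positive-link case (and the negative-link case that reduces to it) to be the main obstacle: one must account for every way $e$ can sit inside a positive cycle versus inside the connecting path or one of the cycles of a handcuff, and for all the degenerate outcomes — edges turning into loops, a Type~I handcuff degenerating into a Type~II handcuff, a negative cycle becoming positive after the identification. A cleaner route that sidesteps most of this is to work with the signed incidence representation of $M(\Sigma)$ over $GF(3)$ and to verify that the matrix of $\Sigma/e$ is obtained from that of $\Sigma$ by the pivot-and-delete operation that represents matroid contraction; but since Theorem~\ref{th_sgg2} is phrased through circuits, the circuit-chasing argument above is the more self-contained one.
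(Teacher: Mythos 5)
The paper does not actually prove this statement: Theorem~\ref{th_minsig} is imported verbatim from Zaslavsky's paper with a citation, so there is no in-house argument to compare yours against. Your circuit-chasing plan is the natural self-contained route, and the parts you carry out are fine: the deletion identity, the reduction of $S$ to a single edge, and the switching-invariance observation (which is Proposition~\ref{prop_samematroid}(ii) here) are all correct and essentially complete.

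That said, two things keep this from being a proof rather than a plan. First, the substantive content --- the two-way correspondence between circuits of $M(\Sigma/e)$ and minimal nonempty sets $C-e$ for $C\in\mathcal{C}(M(\Sigma))$ when $e$ is a link --- is exactly the part you defer as ``exhaustive bookkeeping.'' The delicate direction is the lift: identifying $u$ and $v$ creates cycles and handcuffs in $\Sigma/e$ that do not come from circuits of $\Sigma$ avoiding $e$ (e.g.\ a $u$--$v$ path of $\Sigma$ closing into a cycle, or two cycles through $u$ and through $v$ respectively merging into a Type~II handcuff), and one must check both that each such configuration is $C-e$ for some circuit $C$ of $M(\Sigma)$ and that no strictly smaller set of that form sits inside a surviving old circuit; until that case analysis is written out, the theorem is not established. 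Second, your positive-loop step assumes that contracting a positive loop amounts to deleting it, and your half-edge step assumes other unbalanced edges at the contracted vertex survive as matroid loops. Both are true under Zaslavsky's original definition of contraction, but they are \emph{not} what the paper's Section~\ref{subsec_signed_graphs} definition literally says: there, contracting a loop or half edge at $v$ removes $v$ together with \emph{all} loops and half edges incident to it, which would delete parallel unbalanced elements from the ground set and falsify the identity (take a single vertex carrying two half edges). So you should either flag that you are working with Zaslavsky's convention, or restate the contraction definition before invoking it; as the definitions stand in this paper, the positive-loop and half-edge cases of your argument do not go through as claimed.
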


The following two propositions provide necessary conditions under which certain operations on a signed graph
do not alter its matroid, and under which a signed-graphic matroid is graphic. Proofs  can be found in, or easily derived from the 
results in~\cite{SliQin:07,Zaslavsky:1982, Zaslavsky:1991a}.
\begin{proposition} \label{prop_samematroid}
Let $\Sigma$ be a signed graph. If $\Sigma'$: 
\begin{itemize}
\item[(i)] is obtained from $\Sigma$ by replacing any number of negative loops by half-edges and vice versa, or
\item[(ii)] is obtained from $\Sigma$ by switchings, or
\item[(iii)] is the reversed graph of $\Sigma$ about $(u,v)$ with $\Sigma_1, \Sigma_2$  the 
reversing parts of $\Sigma$,  where $\Sigma_1$ (or $\Sigma_2$) is balanced or all of its negative cycles contain $u$ and $v$, 
\end{itemize}
then $M(\Sigma) = M(\Sigma')$. 
\end{proposition}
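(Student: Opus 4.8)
The plan is to prove in all three cases that $M(\Sigma)$ and $M(\Sigma')$ have the same ground set --- which is clear, since the loop/half-edge exchange, switching and reversing all leave the edge set untouched --- and the same family of circuits; by Theorem~\ref{th_sgg2} this reduces to showing that a set $D\subseteq E(\Sigma)$ induces in $\Sigma$ a positive cycle, or a handcuff of Type~I or of Type~II, exactly when it does so in $\Sigma'$.

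Cases (i) and (ii) I would dispatch quickly. For (i), work one edge at a time: a negative loop at a vertex $w$ and a half-edge at $w$ are both single-edge negative cycles, both incident with $w$ alone, and neither is a proper part of a longer cycle, so the cycles of the underlying graph and their signs --- and hence the handcuffs in which the edge can occur --- are unaffected. For (ii), treat one switching, at a vertex $v$: a cycle either avoids $v$, and then contains no link incident with $v$, or passes through $v$, and then contains exactly two links incident with $v$ (loops and half-edges are not links and are never recoloured by switching); in either case the product of the signs of its edges is preserved, so $\Sigma$ and $(G,\bar\sigma)$ have the same positive and the same negative cycles, hence the same circuits.

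Case (iii) is the real work. Write $\Sigma=\Sigma_1\cup\Sigma_2$ with $V(\Sigma_1)\cap V(\Sigma_2)=\{u,v\}$ and assume, without loss of generality, that $\Sigma_1$ is balanced or that every negative cycle of $\Sigma_1$ contains both $u$ and $v$. First I would record two invariances: reversing does not change the sign function, and a set of edges is a cycle of the underlying graph of $\Sigma$ exactly when it is one of $\Sigma'$ --- clear for cycles inside one of the two parts, and for a cycle meeting both parts because such a cycle is the union of a $u$-$v$ path in $\Sigma_1$ and a $u$-$v$ path in $\Sigma_2$, a decomposition the reversing preserves (this is also an instance of Whitney's $2$-isomorphism theorem, see~\cite{Oxley:92}). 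Hence $\Sigma$ and $\Sigma'$ have the same cycles with the same signs, so in particular the same positive cycles, and only the handcuffs remain to be dealt with. Moreover $\Sigma$ is itself a reversing of $\Sigma'$ about $\{u,v\}$ with the same reversing parts and the same hypothesis on $\Sigma_1$, so it is enough to prove that every handcuff of $\Sigma$ is a handcuff of $\Sigma'$.

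So I would take a handcuff $D$ of $\Sigma$: two negative cycles $C_1,C_2$ either joined by a path $P$ that meets them only at its two end-vertices (Type~I), or sharing exactly one vertex (Type~II). The hypothesis forces every negative cycle of $\Sigma$ that uses an edge of $\Sigma_1$ to pass through both $u$ and $v$ (by the hypothesis if it lies in $\Sigma_1$, and by the path decomposition above if it meets both parts), so $C_1$ and $C_2$ cannot both meet $\Sigma_1$, since they would then share $\{u,v\}$, whereas distinct cycles of a handcuff meet in at most one vertex. Thus at least one of them, say $C_2$, lies in $\Sigma_2$; and since the reversing is the identity on $\Sigma_1$ and, within $\Sigma_2$, interchanges the vertex glued to $u$ with the vertex glued to $v$, in either case --- $C_1\subseteq\Sigma_2$, or $C_1$ meeting $\Sigma_1$ and hence containing both $u$ and $v$ --- the intersection $V(C_1)\cap V(C_2)$ keeps its cardinality, both cycles stay negative, and the connecting path $P$ (whose $\Sigma_1$- and $\Sigma_2$-parts are reglued along the same two vertices $u,v$) stays a path meeting $C_1$ and $C_2$ only at its ends; negative loops and half-edges are the one-vertex instance of this. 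Hence $D$ is again a handcuff of $\Sigma'$, and with the invariance of positive cycles and the symmetry remark this yields $\mathcal C(M(\Sigma))=\mathcal C(M(\Sigma'))$, i.e.\ $M(\Sigma)=M(\Sigma')$. The step I expect to be the main obstacle is making this last argument airtight: one must run through the sub-cases according to how $C_1$, $C_2$ and especially the connecting path $P$ are distributed between the two reversing parts and check in each that the reversing neither creates nor destroys an incidence between the pieces, and it is exactly here --- and only here --- that the hypothesis on $\Sigma_1$ is indispensable.
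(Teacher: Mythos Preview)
The paper does not actually prove this proposition: immediately before the statement it says that proofs ``can be found in, or easily derived from the results in~\cite{SliQin:07,Zaslavsky:1982,Zaslavsky:1991a}'', and then moves on. So there is no proof in the paper to compare against; what you have written is a genuine, self-contained argument where the paper simply appeals to the literature.

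Your approach---show that the two signed graphs have the same ground set and then verify, via Theorem~\ref{th_sgg2}, that positive cycles and handcuffs coincide---is the natural direct route, and your handling of (i) and (ii) is clean and correct. For (iii) your key observations are the right ones: Whitney $2$-isomorphism gives that the underlying cycles and their signs are unchanged, so only handcuffs need attention; and the hypothesis on $\Sigma_1$ forces every negative cycle using an edge of $\Sigma_1$ to pass through both $u$ and $v$, so at most one of the two handcuff cycles can meet $\Sigma_1$. You are also right that the symmetry $\Sigma\leftrightarrow\Sigma'$ reduces the problem to one implication. The remaining case analysis---tracking how $C_1$, $C_2$ and the connecting path $P$ sit relative to the $2$-separation, and checking that after swapping the two attachment vertices in $\Sigma_2$ the pieces still glue to a handcuff---does go through: the essential point is that whenever a piece of the handcuff touches $\{u,v\}$, the cycle $C_1$ (if it meets $\Sigma_1$) contains \emph{both} of these vertices, so swapping them cannot disconnect the path from $C_1$ or create a forbidden extra incidence. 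You have correctly identified this as the place where the hypothesis on $\Sigma_1$ is used and where the write-up needs the most care; once that case analysis is spelled out, the proof is complete.
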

\begin{proposition} \label{prop_sgn_graphic}
Let $\Sigma$ be a signed graph. If $\Sigma$: 
\begin{itemize}
\item[(i)] consists of only positive edges, or
\item[(ii)] is balanced, or 
\item[(iii)] has no negative cycles other than negative loops and half-edges, or
\item[(iv)] has a balancing vertex,
\end{itemize}
then $M(\Sigma)$ is graphic. 
\end{proposition}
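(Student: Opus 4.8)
The overall plan is to exhibit, in each of the four cases, a graph $G'$ with $M(\Sigma)\cong M(G')$, so that $M(\Sigma)$ is graphic by definition. For (i) this is immediate from Theorem~\ref{th_sgg2}: a signed graph with no negative edge has no negative cycle and hence no handcuff, so the circuits of $M(\Sigma)$ are exactly the positive cycles of $\Sigma$, that is, the cycles of the underlying graph $G$, whence $M(\Sigma)=M(G)$. For (ii) I would reduce to (i): a signed graph is balanced if and only if it can be switched to an all-positive one (see~\cite{Zaslavsky:1982}), and switching leaves the matroid unchanged by Proposition~\ref{prop_samematroid}(ii) (and in any case (iii) and (iv) each also imply (ii)). In the remaining two cases it suffices to treat connected $\Sigma$, since $M(\Sigma)$ is the direct sum of the matroids of the components of $\Sigma$ and each hypothesis restricts to every component.

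For (iii) the plan is: by Proposition~\ref{prop_samematroid}(i) replace every negative loop by a half-edge, so that every negative cycle of $\Sigma$ is a half-edge; let $H$ be the set of half-edges. Then $\Sigma\dof H$ has no negative cycle, hence is balanced, so by Proposition~\ref{prop_samematroid}(ii) we may switch it to be all-positive; call its underlying graph $G_0$. Build $\tilde G$ from $G_0$ by adjoining one new vertex $z$ and, for each half-edge of $\Sigma$ at a vertex $w$, a link $wz$. I would then check, via Theorem~\ref{th_sgg2}, that the edge map fixing $E(\Sigma\dof H)$ and sending a half-edge at $w$ to the link $wz$ is a bijection between the circuit families of $M(\Sigma)$ and $M(\tilde G)$: a circuit of $M(\Sigma)$ avoiding $H$ is a positive cycle of $G_0$, hence a cycle of $\tilde G$; a circuit meeting $H$ is a handcuff formed by half-edges at vertices $w$ and $w'$ together with a $w$--$w'$ path of $G_0$ when $w\neq w'$, and with nothing when $w=w'$, matching in $\tilde G$ the cycle through $w$, $z$, $w'$ built from that path and the links $wz,w'z$, respectively the pair of parallel links $\{wz,w'z\}$; and every cycle of $\tilde G$ arises in exactly one of these ways. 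Hence $M(\Sigma)\cong M(\tilde G)$.

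For (iv), let $v$ be a balancing vertex, so $\Sigma\dof v$ is balanced. The plan is to switch (Proposition~\ref{prop_samematroid}(ii)) so that $\Sigma\dof v$ is all-positive and to replace each negative loop at $v$ by a half-edge (Proposition~\ref{prop_samematroid}(i)); then, since any negative edge not incident to $v$ would lie in $\Sigma\dof v$, every negative edge is incident to $v$. Next form $\hat G$ by splitting $v$ into vertices $v_1,v_2$: reattach each positive link at $v$ to $v_1$, each negative link at $v$ to $v_2$, replace each half-edge at $v$ by a link $v_1v_2$, keep each positive loop at $v$ as a loop at $v_1$, and leave $\Sigma\dof v$ unchanged. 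Because $v$ meets every negative cycle of $\Sigma$, no handcuff of $M(\Sigma)$ is of Type I, and the two negative cycles of a Type II handcuff meet exactly at $v$. I would verify that under the splitting a negative cycle through $v$ becomes a $v_1$--$v_2$ path of $\hat G$ (a half-edge at $v$ giving the link $v_1v_2$), so two such cycles meeting only at $v$ become internally disjoint $v_1$--$v_2$ paths, i.e. a cycle of $\hat G$; a positive cycle through $v$ is a positive loop at $v$ or uses two positive or two negative edges at $v$, hence maps to a cycle of $\hat G$ through $v_1$ or through $v_2$; and a circuit avoiding $v$ is a cycle of $\Sigma\dof v$. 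Since conversely every cycle of $\hat G$ comes from such a circuit, Theorem~\ref{th_sgg2} gives a circuit-preserving bijection and $M(\Sigma)\cong M(\hat G)$.

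The step I expect to require the most care is the circuit-by-circuit verification in (iii) and (iv); nothing there is deep, but the degenerate configurations must be checked — parallel edges and loops, several half-edges at a common vertex, a vertex of $G_0$ joined to $v$ by both a positive and a negative link, and edgeless components — all of which are covered uniformly by the construction rules for $\tilde G$ and $\hat G$. This essentially recovers the known fact (see~\cite{Zaslavsky:1982,Zaslavsky:1991a}) that a signed graph with a balancing vertex, or with no negative cycle other than loops and half-edges, has graphic frame matroid.
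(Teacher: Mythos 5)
Your proof is correct and follows exactly the route the paper intends: the paper gives no proof of Proposition~\ref{prop_sgn_graphic}, delegating it to~\cite{SliQin:07,Zaslavsky:1982,Zaslavsky:1991a}, but the remark immediately after it describes precisely your construction for case (iii) (new vertex $z$ with half-edges turned into links to it), and your vertex-splitting of a balancing vertex in case (iv) is the standard construction from those references. You have simply supplied the circuit-by-circuit verifications that the paper leaves to the citations, and they check out, including the degenerate configurations.
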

\noindent
In the first two cases of Proposition~\ref{prop_sgn_graphic} we also have $M(\Sigma) = M(G)$. For the
third case, there exists a graph $G'$ obtained from $G$ by adding a new vertex $v$ and replacing any
negative loop or half-edge by a link joining its end-vertex with $v$ such that $M(\Sigma) = M(G')$. 
Also a straightforward result which is a  direct consequence of  Proposition~\ref{prop_sgn_graphic} is 
that if $\Sigma$ is a B-necklace then $M(\Sigma)$ is graphic. 
The deletion of any vertex which is common to two balanced blocks in a B-necklace, results in the
elimination of all negative cycles, thereby all such vertices are balancing vertices.

\subsection{Tangled Signed  Graphs} \label{subsec_tangled_graphs}

A connected signed graph is called \emph{tangled}  if it has no balancing vertex and no two vertex disjoint negative cycles. 
For our purposes, the importance of tangled signed graphs stems mainly from
Theorem~\ref{th_sliii} according to which if a binary matroid
is  signed-graphic but not graphic then it  has a tangled graphical
representation. In this section we will provide some preliminary results regarding tangled signed graphs and their matroids. 
\begin{theorem}[Slilaty~\cite{Slilaty:07}]  \label{thrm_tangled1}
If $\Sigma$ is a tangled signed graph then: 
\begin{itemize}
\item[(i)] it contains exactly one unbalanced block, and
\item[(ii)] it does not have a double bond. 
\end{itemize}
\end{theorem}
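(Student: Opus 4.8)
For part (i), the plan is to argue that a tangled signed graph $\Sigma$ cannot contain two vertex-disjoint unbalanced blocks. First I would recall that a block is unbalanced precisely when it contains a negative cycle (every block of size $\geq 2$ is $2$-connected, and an unbalanced $2$-connected signed graph contains a negative cycle through any prescribed vertex; a single negative loop or half-edge is itself an unbalanced block). If $\Sigma$ had two distinct unbalanced blocks $B_1$ and $B_2$, then since blocks share at most one vertex, I can pick negative cycles $C_1 \subseteq B_1$ and $C_2 \subseteq B_2$ that are vertex-disjoint: if $B_1$ and $B_2$ are already vertex-disjoint this is immediate, and if they meet in a single cut-vertex $w$, I would use $2$-connectivity of each block to route a negative cycle in $B_i$ avoiding $w$ (an unbalanced $2$-connected signed graph has a negative cycle missing any single given vertex — this is a standard fact, following from the ear-decomposition or from the existence of a negative cycle through any two prescribed vertices). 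Two vertex-disjoint negative cycles contradict tangledness, so $\Sigma$ has at most one unbalanced block; and since $\Sigma$ is connected but has no balancing vertex it cannot be balanced, so it has exactly one unbalanced block.

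For part (ii), the plan is to use Theorem~\ref{thrm_bonds} together with the definition of a double bond. Suppose for contradiction that $\Sigma$ has a double bond $Y$. By definition, $\Sigma \dof Y$ has exactly one balanced component $\Sigma_0$ which is nonempty of edges, together with some unbalanced components, and there is an edge $e \in Y$ both of whose end-vertices lie in $\Sigma_0$. I would first observe that since $\Sigma$ is tangled it has the unique unbalanced block $B$ guaranteed by part (i), and the core of $\Sigma$ is contained in $B$ — so all negative cycles of $\Sigma$ pass through $B$. The existence of a double bond forces $\Sigma \dof Y$ to have at least one unbalanced component $\Sigma_1$ disjoint from the nonempty balanced component $\Sigma_0$; I would then try to produce from this configuration two vertex-disjoint negative cycles in $\Sigma$ — one inside $\Sigma_1$, and one formed using the edge $e$ (whose endpoints are in $\Sigma_0$) together with a path and a negative portion elsewhere — or alternatively produce a balancing vertex, in either case contradicting tangledness. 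The cleanest route is probably: the edge $e$ with both endpoints in $\Sigma_0$, together with a path in $\Sigma_0$ joining those endpoints, forms a cycle; were it negative we already have a negative cycle disjoint from a negative cycle in $\Sigma_1$, contradiction; were every such cycle positive, one analyzes how $e$ sits relative to the block structure to locate a balancing vertex.

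The main obstacle I anticipate is part (ii): turning the combinatorial definition of a double bond into an explicit pair of vertex-disjoint negative cycles (or a balancing vertex) requires careful bookkeeping of which component of $\Sigma \dof Y$ each relevant vertex and edge lands in, and using the minimality in Theorem~\ref{thrm_bonds} to control the interaction between $Y$, the balanced component $\Sigma_0$, and the unbalanced components. In particular one must rule out degenerate cases where the unbalanced components of $\Sigma \dof Y$ are single negative loops or half-edges; here the block structure from part (i) and Theorem~\ref{th_Zasl11} on elementary separators are the tools I would lean on. I expect part (i) to be comparatively routine once the "negative cycle avoiding a vertex" lemma is invoked, whereas part (ii) is where the real work lies and where one genuinely uses that $\Sigma$ has \emph{no} balancing vertex in addition to having no two disjoint negative cycles.
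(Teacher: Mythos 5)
The paper does not actually prove this theorem---it is quoted from Slilaty \cite{Slilaty:07}---so your proposal can only be judged on its own terms, and it has a genuine gap in each part. For (i), the ``standard fact'' you invoke, that an unbalanced $2$-connected signed graph contains a negative cycle avoiding any prescribed vertex, is false: a $4$-cycle with exactly one negative edge is $2$-connected and unbalanced, yet its only negative cycle is the whole cycle, so no vertex can be avoided; more generally any $2$-connected unbalanced signed graph with a balancing vertex is a counterexample. Your argument therefore does not close the case of two unbalanced blocks $B_1,B_2$ meeting in a cut vertex $w$. The correct move there is the opposite one: since $B_1\cap B_2=\{w\}$, a negative cycle of $B_1$ avoiding $w$ would be vertex-disjoint from any negative cycle of $B_2$, so \emph{every} negative cycle of $B_1$ (and of $B_2$) must pass through $w$; since any two unbalanced blocks must intersect (else disjoint negative cycles) and blocks pairwise meet in at most one vertex of the block tree, all unbalanced blocks share the single vertex $w$, every negative cycle of $\Sigma$ passes through $w$, and $w$ is a balancing vertex---contradicting tangledness. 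So the ``no balancing vertex'' hypothesis is needed inside this case, not merely at the end to rule out the balanced graph.

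For (ii), your first branch is sound: a path $P$ in the balanced component $\Sigma_0$ joining the ends of $e\in Y$ yields a cycle $P+e$ vertex-disjoint from a negative cycle of an unbalanced component of $\Sigma\backslash{Y}$, so if $P+e$ is negative you are done. But your second branch (``locate a balancing vertex by analyzing how $e$ sits relative to the block structure'') is not a proof and aims at the wrong target. The case where $\Sigma_0+e$ is balanced is excluded outright by the minimality in Theorem~\ref{thrm_bonds}: since both ends of $e$ lie in $\Sigma_0$, the components of $\Sigma\backslash(Y-\{e\})$ are exactly those of $\Sigma\backslash{Y}$ with $\Sigma_0$ replaced by $\Sigma_0+e$, so if $\Sigma_0+e$ were balanced then the proper subset $Y-\{e\}$ would already increase the number of balanced components, contradicting the minimality of the bond $Y$. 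Hence $\Sigma_0+e$ is unbalanced, its negative cycle uses $e$, lies on $V(\Sigma_0)$, and is vertex-disjoint from a negative cycle of an unbalanced component---the desired contradiction. You list minimality among your tools but never apply it at the one step where it decides the argument, leaving that step as an unexecuted and misdirected plan.
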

\noindent
Therefore, if $Y$ is a bond of a tangled signed graph $\Sigma$ then $Y$ is either a star-bond, a balancing bond or an unbalancing bond. 
Clearly if $Y$ is a balancing bond then, provided that $\Sigma$ is connected, the graph $\Sigma\dof{Y}$ consists of one component. 
The next theorem whose proof is omitted shows that if $Y$ is not a balancing bond then $\Sigma\dof{Y}$ consists of exactly two components.
\begin{theorem} \label{thrm_tangled2}
If $\Sigma$ is a tangled signed graph and $Y$ is a star bond or an unbalancing bond, then $\Sigma \dof Y$ consists of
exactly two components and has exactly one unbalancing block. 
\end{theorem}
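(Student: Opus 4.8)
The plan is to prove the two claims in the statement separately, relying throughout on the two defining properties of a tangled signed graph---no balancing vertex and no two vertex-disjoint negative cycles---and on the structure results of Section~\ref{subsec_tangled_graphs}. I begin with the number of components. Being tangled, $\Sigma$ is connected and, by Theorem~\ref{thrm_tangled1}(i), unbalanced, so by Theorem~\ref{thrm_bonds} and the minimality of the bond $Y$ the graph $\Sigma\dof Y$ has exactly one balanced component. It cannot have two unbalanced components, for each such component would contain a negative cycle, the two would be vertex-disjoint, and both would also be negative cycles of $\Sigma$, contradicting tangledness. On the other hand, since $Y$ is a star bond or an unbalancing bond it is not a balancing bond, so $\Sigma\dof Y$ is not balanced and therefore has at least one unbalanced component. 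Hence $\Sigma\dof Y$ has precisely two components: one balanced (a single vertex if $Y$ is a star bond) and one unbalanced, which I call $D$.

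I then bound the number of unbalanced blocks. As $D$ is unbalanced it has at least one unbalanced block, and it remains to rule out a second. The main device is that every block of $\Sigma\dof Y$, being a $2$-connected subgraph of $\Sigma$, is contained in a single block of $\Sigma$, and an unbalanced such block must be the unique unbalanced block $B_0$ of $\Sigma$ furnished by Theorem~\ref{thrm_tangled1}(i); so all unbalanced blocks of $D$ sit inside $B_0$. Suppose $D$ had two distinct unbalanced blocks $B_1$ and $B_2$. They are not vertex-disjoint, as that would yield two vertex-disjoint negative cycles of $\Sigma$, so they meet in a single cut vertex $w$ of $D$; moreover no negative cycle of $B_1$ can avoid $w$, since it would then be vertex-disjoint from a negative cycle of $B_2$. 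Thus every negative cycle of $D$ passes through $w$, i.e. $w$ is a balancing vertex of $D$. The final step is to push this back to $\Sigma$: using that every negative cycle of $\Sigma$ lies in $B_0$ together with the shape of star and unbalancing bonds (for a star bond the balanced component is a single vertex, and for an unbalancing bond no link of $Y$ joins two vertices of the balanced component) and the absence of double bonds in $\Sigma$ (Theorem~\ref{thrm_tangled1}(ii)), one argues that a negative cycle of $\Sigma$ avoiding $w$ and using edges of $Y$ would either contradict the minimality of $Y$ or produce a negative cycle vertex-disjoint from one of $D\dof w$. Hence $w$ is a balancing vertex of $\Sigma$, contradicting tangledness, and $D$ has exactly one unbalanced block.

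The step I expect to be the main obstacle is the last one above. Counting components, and reducing the block count to the situation of two unbalanced blocks sharing a cut vertex $w$, are essentially bookkeeping with the definitions; the delicate point---and a place where the argument genuinely uses that $Y$ is a \emph{bond} and that $\Sigma$ has no double bond---is showing that deleting a star or unbalancing bond of a tangled signed graph cannot split its unique unbalanced block into two unbalanced pieces. I would settle this by a case analysis on a hypothetical negative cycle of $\Sigma$ that avoids $w$ but uses one or more edges of $Y$, tracking how its edges in $Y$ and its subpaths outside $Y$ are distributed between the balanced component and $D$, and extracting in each case a contradiction either with the minimality of $Y$ or with the tangled hypothesis.
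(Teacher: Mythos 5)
The paper itself omits the proof of Theorem~\ref{thrm_tangled2}, so there is nothing to compare against line by line; judged on its own terms, your argument is complete and correct for the count of components and for the reduction to the configuration of two unbalanced blocks $B_1,B_2$ of $D$ meeting in a cut vertex $w$ through which every negative cycle of $D$ must pass. The star-bond case also closes exactly as you indicate: a negative cycle avoiding $w$ must pass through the isolated balanced vertex $u$, hence uses exactly two edges of $Y$ and one path in $D-w$, which lies in a single component of $D-w$ and is therefore disjoint from a negative cycle of $B_1$ or of $B_2$.

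The genuine gap is in the unbalancing-bond case, precisely the step you flag. There the balanced component $A$ has edges, every edge of $Y$ joins $A$ to $D$ (by minimality of $Y$ and the absence of double bonds), and a negative cycle $C$ avoiding $w$ may use $2n\geq 4$ edges of $Y$, alternating between paths in $A$ and paths in $D-w$. Such a $C$ can perfectly well meet \emph{both} components of $D-w$ containing $B_1-w$ and $B_2-w$ --- indeed it is forced to, since otherwise it is already disjoint from a negative cycle of $B_1$ or $B_2$ --- and then $C$ itself is disjoint from no negative cycle of $D$, so the contradiction you announce (``produce a negative cycle vertex-disjoint from one of $D\setminus w$'') does not materialize, and minimality of $Y$ gives nothing further. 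The missing idea is a rerouting step: switch so that $A$ is all-positive; then the product of the signs of the $n$ segments $y_{2i-1}P_iy_{2i}$ of $C$ crossing $D$ equals the sign of $C$, so some single segment $S$ is negative; closing $S$ up by a path inside the connected balanced graph $A$ yields a negative cycle whose intersection with $D$ is one path $P_i$ lying in a single component of $D-w$, and \emph{that} cycle is vertex-disjoint from a negative cycle of $B_1$ or of $B_2$, contradicting tangledness. This is exactly the device the paper uses in the proof of Lemma~\ref{lem_un1} (the argument around Figure~\ref{fig_u14}); without it your case analysis cannot be completed as described. (A second, minor, point: your claim that every negative cycle of $D$ passes through $w$ silently assumes $D$ has only the two unbalanced blocks $B_1,B_2$; if there is a third, one must invoke the tree structure of the block decomposition to see that it also contains $w$, which is routine.)
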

We work mainly with connected matroids, therefore it would be
desirable to have a connection between the connectivity of a signed-graphic 
matroid $M(\Sigma)$ and the connectivity of $\Sigma$. 
\begin{theorem} \label{th_nero1}
Let $\Sigma$ be a tangled signed graph. Then $\Sigma$ is $2$-connected if and only if $M(\Sigma)$ is connected.
\end{theorem}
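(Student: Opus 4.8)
The plan is to prove both directions by relating the Tutte $2$-connectivity of $\Sigma$ to the separators of $M(\Sigma)$, using the structural description of elementary separators in Theorem~\ref{th_Zasl11} together with the fact (Theorem~\ref{thrm_tangled1}(i)) that a tangled signed graph has exactly one unbalanced block.

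For the forward direction, suppose $\Sigma$ is $2$-connected. Then $\Sigma$ is itself a single block, and since it is unbalanced (being tangled it has no balancing vertex, so in particular it is not balanced) this block is the unique unbalanced block; hence the core of $\Sigma$ is all of $\Sigma$ and there are no outer blocks. By Theorem~\ref{th_Zasl11} the only way $M(\Sigma)$ could fail to be connected is if the core is a B-necklace, in which case each of the balanced blocks $\Sigma_i$ in the necklace would be a separate elementary separator. But a B-necklace with $n\ge 2$ constituent blocks contains two vertex-disjoint negative cycles: any negative cycle must use an edge from each $\Sigma_i$, and by choosing two "arcs" of the necklace through disjoint sets of the $\Sigma_i$'s one produces two vertex-disjoint negative cycles, contradicting that $\Sigma$ is tangled. (If $n=1$ the "necklace" is a single block and the conclusion is immediate.) Therefore the core is not a B-necklace, so by Theorem~\ref{th_Zasl11} $M(\Sigma)$ has a single elementary separator, i.e. $M(\Sigma)$ is connected.

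For the converse, I argue by contrapositive: assume $\Sigma$ is tangled but not $2$-connected, and show $M(\Sigma)$ is disconnected. Not being $2$-connected, $\Sigma$ decomposes into at least two blocks meeting at cut-vertices (loops and half-edges also count as blocks). By Theorem~\ref{thrm_tangled1}(i) exactly one block $B_0$ is unbalanced, so the remaining blocks are balanced. If some balanced block is an outer block, then by Theorem~\ref{th_Zasl11} its edge set is an elementary separator distinct from the core, and $M(\Sigma)$ is disconnected. The remaining case is that every balanced block is an inner block, i.e. lies on a path of blocks between unbalanced blocks; but there is only one unbalanced block, so "the path between two unbalanced blocks" is degenerate and in fact no balanced block can be inner unless $\Sigma$ consists of the single block $B_0$ alone — contradicting our assumption that $\Sigma$ has at least two blocks. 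Hence some balanced block is outer, giving a second elementary separator and the desired disconnectedness of $M(\Sigma)$.

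The main obstacle is the B-necklace case in the forward direction: one must verify carefully that a genuine B-necklace (with at least two constituent balanced blocks) really does contain two vertex-disjoint negative cycles, which is exactly what is forbidden in a tangled graph. This rests on the observation recorded after Figure~\ref{fig_necklace} that every negative cycle in a B-necklace meets each $\Sigma_i$, combined with a routing argument showing that the cyclic arrangement admits two negative cycles traversing disjoint portions of the necklace; making this precise (and handling the small cases $n=1,2$ separately) is the only non-formal step. Everything else is bookkeeping with Theorem~\ref{th_Zasl11} and the definition of inner/outer blocks.
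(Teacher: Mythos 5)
Your converse direction is essentially the paper's argument: at least two blocks, exactly one unbalanced by Theorem~\ref{thrm_tangled1}(i), hence the balanced blocks are outer and give extra elementary separators via Theorem~\ref{th_Zasl11}. That part is fine.

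The forward direction has a genuine error. You correctly reduce to showing that a B-necklace cannot be tangled, but your justification --- that a B-necklace with $n\ge 2$ constituent blocks contains two vertex-disjoint negative cycles obtained by routing two negative cycles through ``disjoint sets of the $\Sigma_i$'s'' --- is false, and no routing argument can repair it. As the paper observes after Figure~\ref{fig_necklace}, \emph{every} negative cycle in a B-necklace must contain an edge of \emph{each} $\Sigma_i$: each $\Sigma_i$ is balanced, so a cycle confined to a proper subcollection of the blocks is positive, and the only cycles using more than one block traverse the entire necklace. Consequently every negative cycle passes through every joint vertex (the vertex shared by consecutive blocks $\Sigma_i$ and $\Sigma_{i+1}$), so any two negative cycles meet at all of these vertices and are never vertex-disjoint. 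The property of tangledness that a B-necklace actually violates is the \emph{other} one: any joint vertex is a balancing vertex, since deleting it kills all negative cycles (this is stated explicitly after Proposition~\ref{prop_sgn_graphic}). Replacing your two-disjoint-cycles claim with ``a B-necklace has a balancing vertex, hence is not tangled'' --- which is exactly what the paper does --- closes the gap; the rest of your structure then goes through unchanged.
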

\begin{proof}
For the  ``only if'' part, assume that for a $2$-connected tangled signed graph $\Sigma$ the 
matroid $M(\Sigma)$ is disconnected. By Theorem~\ref{th_Zasl11}, this is possible only if $\Sigma$ is a B-necklace. But then
$\Sigma$ contains a balancing vertex and thus, $\Sigma$ is not tangled which is
in contradiction with our assumption.

For the ``if'' part suppose that $M(\Sigma)$ is $2$-connected and it does have a tangled representation $\Sigma$ which 
is not $2$-connected. Therefore $\Sigma$ contains  at least two blocks, and by Theorem~\ref{thrm_tangled1}  exactly one
is unbalancing. By Theorem~\ref{th_Zasl11} $\Sigma$ has two separates, which implies that $M(\Sigma)$ has more than one elementary separators
contradicting our hypothesis about the connectivity of the matroid. 
\end{proof}
The following theorem  can be deduced from $\cite{Pagano:1998,SliQin:07}$. 
\begin{theorem} \label{th_sliii}
If $\Sigma$ is a connected signed graph then $M(\Sigma)$ is binary  if and only if
\begin{itemize}
\item[(i)] $\Sigma$ is tangled, or
\item[(ii)] $M(\Sigma)$ is graphic.
\end{itemize}
\end{theorem}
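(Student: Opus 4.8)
Theorem~\ref{th_sliii} is in essence a repackaging of Pagano's characterization of binary signed-graphic matroids~\cite{Pagano:1998}, stated in the matroidal language of~\cite{SliQin:07}, and my plan is to assemble it from those two sources while pinning down the one substantive ingredient. The logical skeleton has two directions. For the ``if'' direction, case~(ii) is trivial because every graphic matroid is regular, hence binary. For case~(i) one must show that the frame matroid of a tangled signed graph is binary; by Tutte's excluded-minor theorem this amounts to showing $M(\Sigma)$ has no $U_{2,4}$-minor, and since signed-graph deletion and contraction realize all matroid minors (Theorem~\ref{th_minsig}), it is enough to know that no signed graph $\Sigma'$ obtainable from a tangled $\Sigma$ by deletions and contractions can have $M(\Sigma')\cong U_{2,4}$. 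This is precisely what Pagano's structure theory for signed graphs with no two vertex-disjoint negative cycles delivers, and I would quote it here rather than reprove it.

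For the ``only if'' direction, assume $M(\Sigma)$ is binary; if it happens to be graphic we are in case~(ii), so suppose not. Since $\Sigma$ is connected, to prove $\Sigma$ tangled it remains to exclude a balancing vertex and two vertex-disjoint negative cycles. A balancing vertex would make $M(\Sigma)$ graphic by Proposition~\ref{prop_sgn_graphic}(iv), against our assumption. Two vertex-disjoint negative cycles would, again by Pagano's analysis, force $M(\Sigma)$ to be either graphic or to contain a $U_{2,4}$-minor and hence non-binary --- either outcome contradicts our hypotheses. Hence $\Sigma$ is tangled. It is worth noting that the two excluded configurations cannot coexist in a connected signed graph: a balancing vertex $z$ can lie in at most one of two vertex-disjoint negative cycles, and the other cycle would then survive in the balanced graph $\Sigma\dof z$, which is impossible; so a connected non-tangled signed graph exhibits exactly one of the two obstructions, which streamlines the case analysis.

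The crux, used in both directions, is thus the purely signed-graphic fact that for a \emph{connected} signed graph the presence of two vertex-disjoint negative cycles forces the frame matroid to be graphic or to have a $U_{2,4}$-minor, and --- dually --- that this obstruction propagates correctly when passing to signed-graph minors of a tangled graph. This is the part I expect to be genuinely hard and the part I would not attempt to prove from scratch: the cleanest route is to transcribe the relevant statements of~\cite{Pagano:1998} (and their reformulation in~\cite{SliQin:07}) into the terminology of Section~\ref{subsec_tangled_graphs} and verify that, so transcribed, they amount exactly to the equivalence asserted in Theorem~\ref{th_sliii}.
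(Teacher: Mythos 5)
Your proposal is consistent with the paper, which offers no proof of this theorem at all and simply states that it "can be deduced from" Pagano and Slilaty--Qin --- exactly the sources you defer to for the crux. Your added skeleton (graphic $\Rightarrow$ binary trivially; balancing vertex handled by Proposition~\ref{prop_sgn_graphic}(iv); the two-vertex-disjoint-negative-cycles obstruction and the $U_{2,4}$-minor analysis outsourced to Pagano) is a correct account of how that deduction goes, so there is nothing substantive to compare beyond the shared reliance on the cited results.
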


\section{Decomposition}\label{sec_decomposition}
In this section we will present a decomposition for binary signed-graphic matroids which utilizes the
theory of bridges by Tutte~\cite{Tutte:1959, Tutte:1965}. 
In section~\ref{subsec_bridges} we present some definitions and preliminary results regarding the theory of bridges, which will be needed
for the sections that follow. In section~\ref{subsec_corc_bonds} the cocircuits of binary signed-graphic matroids are further classified
into graphic and non-graphic, depending on whether or not their deletion produces a graphic matroid or not.
An excluded minor characterization  for signed-graphic matroids with all graphic cocircuits is given in section~\ref{subsubsec_graphic}, 
while the decomposition based on non-graphic cocircuits is presented in section~\ref{subsubsec_nongraphic}. The majority of  the results
in this section have to do with the structure of tangled signed graphs, and the relationship between cocircuits in a 
binary signed-graphic matroid and bonds in the corresponding signed graphic representation .

\subsection{Bridges} \label{subsec_bridges}   
Let $Y$ be a cocircuit of a binary matroid $M$. 
We define the \emph{bridges} of $Y$ in $M$ to be the elementary separators of $M\dof{Y}$. If $M\dof{Y}$ has more than one bridge 
then we say that $Y$ is a \emph{separating} cocircuit; otherwise it is \emph{non-separating}. Let $B$ be a bridge of $Y$ in $M$; the
 matroid $M \cto (B\cup{Y})$ is called a \emph{$Y$-component} of $M$. By a result of \cite{Tutte:1959} we know that if $M$ is
 connected then each $Y$-component of $M$ is connected. Furthermore, for any bridge $B$ of $Y$ in $M$, we denote by $\pi(M,B,Y)$ 
the family of all minimal non-null subsets of $Y$ which are intersections of cocircuits of $M\cto({B\cup{Y}})$.  
The following theorem and its corollary relate $\pi(M,B,Y)$ for binary matroids with the family of cocircuits of a given minor.
\begin{theorem}\label{th_ila}
Let $Y$ be a cocircuit of a matroid $M$.
Two elements $a$ and $b$ of $Y$ belong to the same members of $\pi(M,B,Y)$ if and only if they belong to the same cocircuits 
of $M\cto{(B\cup{Y})}\dto{Y}$. 
\end{theorem}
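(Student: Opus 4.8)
The plan is to route the equivalence through the matroid $N:=M\cto(B\cup Y)$ together with its deletion $N\dto Y$. Since a bridge of $Y$ is a subset of $E(M)\setminus Y$ we have $B\cap Y=\emptyset$, so $E(N)=B\cup Y$ and $N\dto Y=N\setminus B$. I will prove two claims: (A) for $a,b\in Y$, the elements $a,b$ lie in exactly the same cocircuits of $N$ iff they lie in exactly the same cocircuits of $N\setminus B$; and (B) the members of $\pi(M,B,Y)$ are exactly the blocks of the partition of $Y$ induced by the relation ``$a,b$ lie in the same cocircuits of $N$''. Together (A) and (B) give the theorem. A preliminary observation, used throughout, is that $Y$ is itself a cocircuit of $N$: the cocircuits of a contraction $M/T$ are precisely the cocircuits of $M$ disjoint from $T$ (dual to the description of circuits of a deletion, see~\cite{Oxley:92}), and $Y$ is a cocircuit of $M$ contained in $B\cup Y$. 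Hence no element of $Y$ is a loop of $N$, and for $a\in Y$ the set $\widehat C(a):=\bigcap\{C^{*}\in\mathcal{C}^{*}(N):a\in C^{*}\}$ is non-null, is an intersection of cocircuits of $N$, and — because the cocircuit $Y$ appears in this defining family — satisfies $\widehat C(a)\subseteq Y$ and is the smallest intersection of cocircuits of $N$ containing $a$. So for (B) it remains to check that each $\widehat C(a)$ is inclusion-minimal among \emph{all} non-null intersections of cocircuits of $N$, and that $a,b$ share such a minimal set iff $\widehat C(a)=\widehat C(b)$.

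The heart of (B) is the assertion that $c\in\widehat C(a)$ implies $\widehat C(c)=\widehat C(a)$. Suppose instead there is a cocircuit $C_0^{*}$ of $N$ with $c\in C_0^{*}$ and $a\notin C_0^{*}$; choose any cocircuit $C_1^{*}$ with $a\in C_1^{*}$, so $c\in C_1^{*}$ because $c\in\widehat C(a)$. Strong cocircuit elimination applied to $C_1^{*}$ and $C_0^{*}$, eliminating $c\in C_1^{*}\cap C_0^{*}$ and keeping $a\in C_1^{*}\setminus C_0^{*}$, produces a cocircuit $C_2^{*}\subseteq(C_0^{*}\cup C_1^{*})\setminus\{c\}$ with $a\in C_2^{*}$; but then $c\in C_2^{*}$ since $c\in\widehat C(a)$, contradicting $c\notin C_2^{*}$. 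Thus every cocircuit through $a$ passes through $c$ and vice versa, so $\widehat C(a)=\widehat C(c)$. It follows that distinct sets among $\{\widehat C(a):a\in Y\}$ are disjoint; that each is minimal (a non-null intersection of cocircuits lying strictly inside $\widehat C(a)$ would meet it in some $c$ and hence contain $\widehat C(c)=\widehat C(a)$); and that these sets exhaust $\pi(M,B,Y)$ (any member contains some $a$, hence contains $\widehat C(a)$, hence equals it by minimality). Since the $\widehat C(a)$ partition $Y$, elements $a,b$ lie in the same member(s) of $\pi(M,B,Y)$ iff $\widehat C(a)=\widehat C(b)$, which by the assertion above holds iff $a$ and $b$ lie in exactly the same cocircuits of $N$. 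This is (B).

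For (A) I would invoke two standard facts about deletion, each dual to a fact about contraction (\cite{Oxley:92}): the cocircuits of $N\setminus B$ are the inclusion-minimal non-null members of $\{C^{*}\cap Y:C^{*}\in\mathcal{C}^{*}(N)\}$, and every set $C^{*}\cap Y$ with $C^{*}\in\mathcal{C}^{*}(N)$ is a union of cocircuits of $N\setminus B$. If a cocircuit $C^{*}$ of $N$ separates $a,b\in Y$, say $a\in C^{*}$ and $b\notin C^{*}$, then $a$ lies in one of the cocircuits $D\subseteq C^{*}\cap Y$ of $N\setminus B$, and $b\notin D$ because $b\in Y\setminus C^{*}$, so $D$ separates $a,b$ in $N\setminus B$. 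Conversely, if a cocircuit $D$ of $N\setminus B$ separates $a,b$, then $D=C^{*}\cap Y$ for some $C^{*}\in\mathcal{C}^{*}(N)$, and since $b\in Y\setminus D$ we get $b\notin C^{*}\ni a$, so $C^{*}$ separates $a,b$ in $N$. Hence the two ``same cocircuits'' relations on $Y$ coincide, which is (A); together with (B) this yields the theorem.

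The step I expect to be the real obstacle is (B): recognising that $\pi(M,B,Y)$ is not merely \emph{some} family of subsets of $Y$ but is exactly the partition of $Y$ into cocircuit classes of $N$, and that pinning this down genuinely needs the \emph{strong} cocircuit elimination axiom rather than the weak form. The remark that $Y$ is a cocircuit of $N$ is what confines each $\widehat C(a)$ to $Y$ and removes any subtlety about how the requirement ``$\subseteq Y$'' in the definition of $\pi$ interacts with forming intersections of cocircuits; once (B) is in hand, (A) is routine manipulation with the deletion facts.
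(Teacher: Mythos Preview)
Your proof is correct and follows the same two–step route as the paper: pass from $\pi(M,B,Y)$ to the cocircuits of $N=M\cto(B\cup Y)$, and then from those to the cocircuits of $N\dto Y=N\setminus B$. The paper's version is considerably terser: for the ``only if'' direction it simply observes that minimality of $W\in\pi(M,B,Y)$ forces every cocircuit $X$ of $N$ to satisfy $X\cap W=\emptyset$ or $W\subseteq X$ (since otherwise $X\cap W$ would be a strictly smaller intersection of cocircuits inside $Y$), and for the passage between $N$ and $N\setminus B$ it just appeals to ``the definition of contraction and deletion''. Your claim~(A) makes that last passage explicit, and your claim~(B) goes further than the paper by proving, via strong cocircuit elimination, that the members of $\pi(M,B,Y)$ are exactly the classes $\widehat C(a)$ and hence partition $Y$. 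That is more than the theorem strictly requires, but it is not wasted effort: it makes the argument self-contained (the paper's ``only if'' tacitly starts from $a,b\in W\in\pi$, which presupposes that every element of $Y$ lies in some member of $\pi$), and it shows that the partition property the paper cites from Tutte for binary matroids in fact holds for arbitrary matroids.
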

\begin{proof}
For the ``only if'' part, suppose that $a,b\in{W}\in{\pi(M,B,Y)}$. Then for any cocircuit $X$ of $M\cto{(B\cup{Y})}$ either 
$X\cap{W}=\emptyset$ or $W \subseteq{X}$, since otherwise $W$ will not be minimal. This implies that $a$ and $b$ belong to exactly 
the same cocircuits of $M\cto{(B\cup{Y})}$. Thus, by the definition of the matroid operations of contraction and deletion we have 
that $a$ and $b$ belong to the same cocircuits of $M\cto{(B\cup{Y})}\dto{Y}$.

For the ``if'' part, since $a$ and $b$ belong to the same cocircuits of $M\cto{(B\cup{Y})}\dto{Y}$ then by the definition of 
matroid contraction and deletion we obtain that there is no cocircuit $Z$ of $M\cto{(B\cup{Y})}$  such that $Z\cap{\{a,b\}}=\{a\}$ 
or $Z\cap{\{a,b\}}=\{b\}$. Therefore, by the definition of the members of $\pi(M,B,Y)$, the result follows.
\end{proof}
In \cite{Tutte:1965}, Tutte proved that if  $M$ is binary the members  of $\pi(M,B,Y)$ are disjoint and their union is $Y$. 
We usually refer to $\pi(M,B,Y)$ as the partition of $Y$ determined by $B$. By this result, Theorem~\ref{th_ila} has the 
following useful Corollary~\ref{cor_corl1}.
\begin{corollary} \label{cor_corl1}
Let $Y$ be a cocircuit of a matroid $M$. If $M$ is binary then $\pi(M,B,Y)=\mathcal{C}^{*}(M\cto(B\cup{Y})\dto{Y})$.
\end{corollary}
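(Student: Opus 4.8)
Corollary \ref{cor_corl1} follows almost immediately from Theorem \ref{th_ila} once we invoke Tutte's result that, for binary $M$, the members of $\pi(M,B,Y)$ are pairwise disjoint and cover $Y$. The plan is as follows. Write $N := M\cto(B\cup Y)\dto Y$; this is a matroid on ground set $Y$. I want to show that the family of sets $\pi(M,B,Y)$ coincides with $\mathcal{C}^{*}(N)$.

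First I would set up the equivalence relation on $Y$ that Theorem \ref{th_ila} describes: declare $a \sim b$ iff $a$ and $b$ lie in exactly the same cocircuits of $N$. Theorem \ref{th_ila} says this is the same relation as ``$a$ and $b$ belong to the same members of $\pi(M,B,Y)$.'' Now, since $M$ is binary, so is every minor of $M$, in particular $N$; and in a binary matroid two elements that lie in precisely the same cocircuits are parallel (they form a circuit of size $\le 2$), so the $\sim$-classes of $Y$ are exactly the parallel classes of $N$ together with the loops of $N$ grouped appropriately — but more to the point, by Tutte's disjointness-and-covering result the $\sim$-classes are precisely the members of $\pi(M,B,Y)$, which therefore partition $Y$. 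So each member $W$ of $\pi(M,B,Y)$ is a maximal set of elements of $Y$ all lying in the same cocircuits of $N$.

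Next I would argue that each such class $W$ is in fact a cocircuit of $N$, and conversely that every cocircuit of $N$ is a single class $W$. For the first direction: pick any cocircuit $Z$ of $N$; by Theorem \ref{th_ila} (its ``only if'' content, rephrased via the definition of $\pi$), $Z$ is a union of members of $\pi(M,B,Y)$, since any two elements of $Z$ that lie in a common member $W$ force all of $W$ into $Z$ (otherwise $W$ would meet $Z$ without being contained in it, contradicting that the elements of $W$ share all cocircuits of $N$). But a cocircuit is a minimal nonempty set meeting every cocircuit's complement appropriately — more simply, cocircuits of a matroid are the minimal nonempty unions of no smaller cocircuit, hence a cocircuit $Z$ that is a disjoint union of members of $\pi$ must equal a single member $W$ (a proper nonempty subfamily would give a proper ``sub-cocircuit,'' impossible). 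Conversely, each member $W\in\pi(M,B,Y)$ is nonempty and, being one $\sim$-class, its elements lie in a common cocircuit of $N$; minimality of $W$ as a member of $\pi$ translates, again through Theorem \ref{th_ila}, into $W$ being exactly a cocircuit rather than a proper subset of one. Combining the two inclusions gives $\pi(M,B,Y) = \mathcal{C}^{*}(N)$.

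The only genuinely delicate point — and the step I would be most careful about — is making the passage ``same cocircuits of $N$ $\Leftrightarrow$ same member of $\pi$'' do double duty: Theorem \ref{th_ila} gives the equivalence of the two partitions-by-relation, but turning that into an equality of the two \emph{families of sets} $\pi(M,B,Y)$ and $\mathcal{C}^{*}(N)$ requires knowing (a) that $\pi(M,B,Y)$ is genuinely a partition, which is exactly Tutte's binary result that the paper cites, and (b) that in a matroid the cocircuits are recoverable from the relation ``lies in the same cocircuits as,'' i.e. that each cocircuit is a minimal class. Part (b) is where one must be slightly cautious, since in general a cocircuit need not be a single parallel class of the dual; however, for the purpose here it suffices that every cocircuit of $N$ is a union of $\sim$-classes and is minimal with that property, which is precisely the minimality built into the definition of the members of $\pi(M,B,Y)$ in the statement of Theorem \ref{th_ila}. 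I expect the author's proof to be just a two-line deduction quoting Theorem \ref{th_ila} and Tutte's partition result, and I would write mine the same way, flagging the partition fact explicitly as the ingredient that upgrades Theorem \ref{th_ila} from a statement about a relation to the claimed set equality.
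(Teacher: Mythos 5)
Your strategy is the same one the paper takes: the authors give Corollary~\ref{cor_corl1} no proof at all, presenting it as an immediate consequence of Theorem~\ref{th_ila} together with Tutte's result that for binary $M$ the members of $\pi(M,B,Y)$ are disjoint with union $Y$. The trouble is that when you try to spell that deduction out, it breaks at exactly the step you flag as delicate. Write $N=M\cto(B\cup Y)\dto Y$. From Theorem~\ref{th_ila} and the partition property you correctly obtain that the members of $\pi(M,B,Y)$ are the classes of the relation ``lies in the same cocircuits of $N$,'' hence that every cocircuit of $N$ is a union of such classes and every class is contained in some cocircuit of $N$. But these two facts alone do not force the two families to coincide. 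Your claim that a cocircuit $Z$ which is a disjoint union of classes must equal a single class ``because a proper nonempty subfamily would give a proper sub-cocircuit'' is a non sequitur: the union of a proper subfamily of classes is not known to be a cocircuit (that is precisely what is being proved), so no violation of cocircuit minimality arises. The abstract implication you are invoking is in fact false: in $U_{2,3}$ the classes of ``same cocircuits'' are the three singletons, each contained in a cocircuit, every cocircuit is a union of two classes, and yet the classes are not the cocircuits. The reverse inclusion is likewise asserted rather than proved; the minimality in the definition of $\pi(M,B,Y)$ is minimality among intersections of cocircuits of $M\cto(B\cup Y)$, which a priori makes members of $\pi$ \emph{smaller} than cocircuits of $N$, not equal to them.

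What is missing is an extra structural input specific to this situation, for instance that each member of $\pi(M,B,Y)$ \emph{contains} (hence, combined with your correct half, equals) a cocircuit of $N$; equivalently, that the cocircuits of $N$ are pairwise disjoint. Establishing this uses that $M$ is binary, so the trace $C\cap Y$ of any cocircuit $C$ of $M\cto(B\cup Y)$ is a disjoint union of cocircuits of $N$, together with Tutte's analysis of bridges --- it is not recoverable from Theorem~\ref{th_ila} plus the bare partition statement. Since the paper supplies no argument either, your proposal is no further from a proof than the text itself, but as written the derivation does not go through.
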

Let $B_1$ and $B_2$ be two bridges of $Y$ in $M$. The bridges $B_1$ and $B_2$ are said to \emph{avoid} each other if there 
exists $S\in{\pi(M,B_1,Y)}$ and $T\in{\pi(M,B_2,Y)}$ such that $S\cup{T}=Y$; otherwise we say that $B_1$ and $B_2$ \emph{overlap} 
one another. A cocircuit $Y$ is called \emph{bridge-separable} if its bridges can be classified into two classes $U$ and $V$ such 
that no two members of the same class overlap.   Tutte in~\cite{Tutte:1965} has shown that all
cocircuits of graphic matroids are bridge-separable while if a matroid has a cocircuit which is not bridge-separable, then it 
will contain a minor isomorphic to $M^{*}(K_5)$, $M^{*}(K_{3,3})$ or $F_7^{*}$. 
Recall that by definition there is one-to-one correspondence between the family of 
edge-sets of the separates of $\Sigma\dof{Y}$ and the  family of bridges of $Y$ in $M(\Sigma)$. Suppose now that $B$ 
is a bridge of $Y$ in $M(\Sigma)$  and let $\Sigma_i$ be the component of $\Sigma\dof{Y}$ such that 
$\Sigma\dto{B} \subseteq{\Sigma_i}$. Then, if $v$ is a vertex of $V(\Sigma\dto{B})$, we denote by $C(B,v)$ the 
component of $\Sigma_i\dof{B}$ having $v$ as a vertex. Moreover, we denote by $Y(B,v)$ the set of all 
$y\in{Y}$ such that one end of $y$ in $\Sigma$ is a vertex of $C(B,v)$. 
Two well known results which are a 
consequence of the theory of bridges, is Tutte's recognition algorithm for graphic matroids in~\cite{Tutte:1960} and 
Bixby and Cunningham's efficient algorithm for testing whether a matroid is 3-connected or not in~\cite{BixCunn:79}.

\subsection{Cocircuits and Bonds} \label{subsec_corc_bonds}
Let $Y$ be a cocircuit of a connected binary signed-graphic matroid $M(\Sigma)$. 
Clearly, $Y$ is a bond of $\Sigma$ and by Theorems~\ref{th_nero1} and~\ref{th_sliii} we have that $\Sigma$ is 
2-connected and tangled. By the classification of bonds based on the nature of $\Sigma\dof Y$  presented in section~\ref{subsec_tangled_graphs},
we know that $Y$ can be one of the following types of bonds in $\Sigma$ (see Figure~\ref{fig_bonds_tangled_signed_graphs}): 
\begin{itemize}
\item[(a)] balancing bond
\item[(b)] star or unbalancing bond such that the core of $\Sigma\dof Y$ is a B-necklace
\item[(c)] star bond such that the core of $\Sigma\dof Y$ is not a B-necklace
\item[(d)] unbalancing bond such that the core of $\Sigma\dof Y$ is not a B-necklace
\end{itemize}
\begin{figure}[hbtp]
\begin{center}
\mbox{
\subfigure[balancing bond]
{
\includegraphics*[scale=0.29]{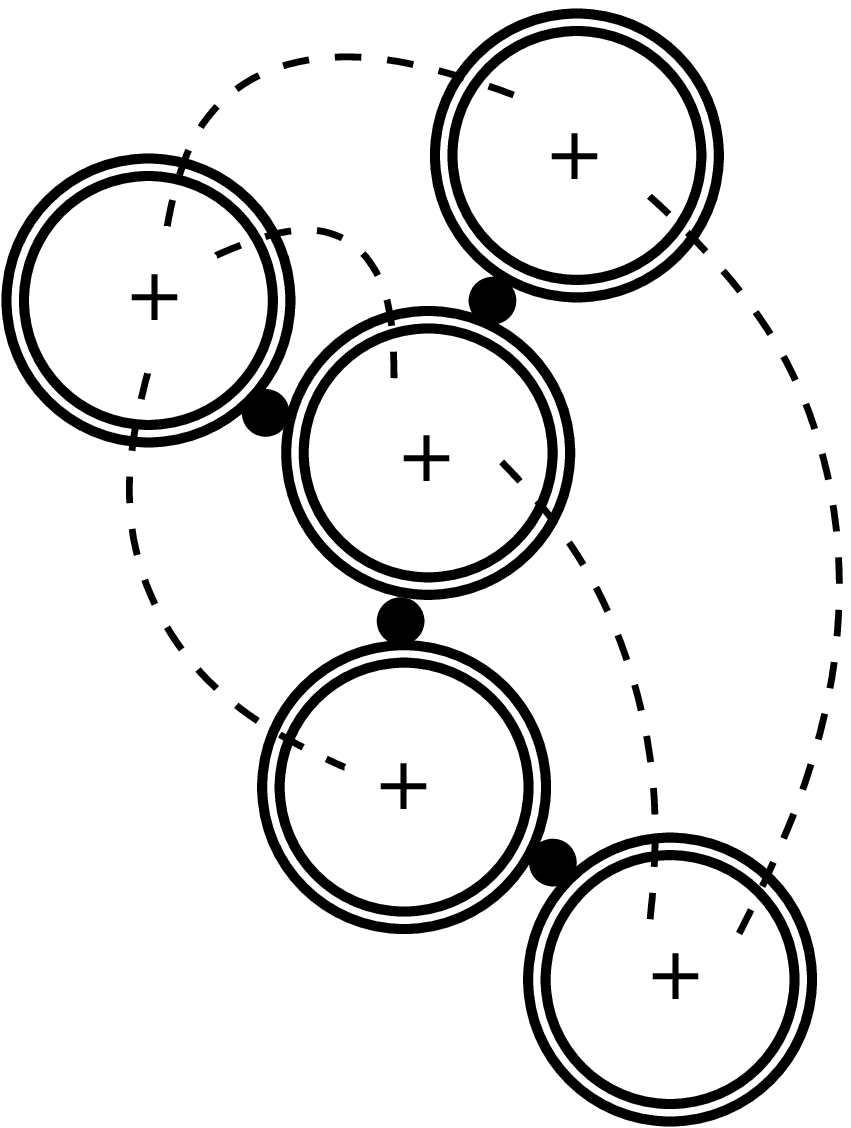}
}\quad
\subfigure[bond with a B-necklace]
{
\includegraphics*[scale=0.29]{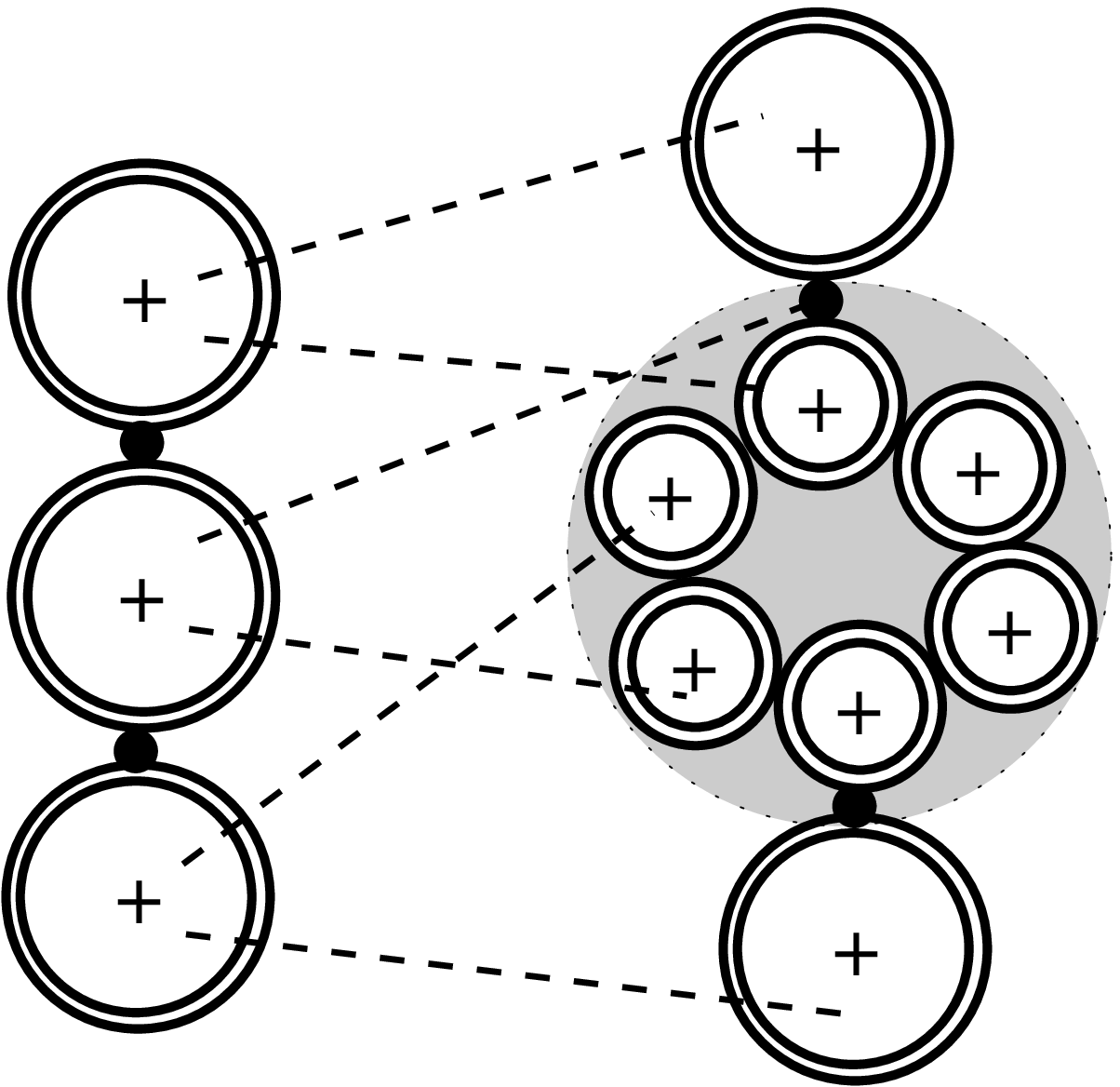}
}\quad
\subfigure[star bond]
{
\includegraphics*[scale=0.29]{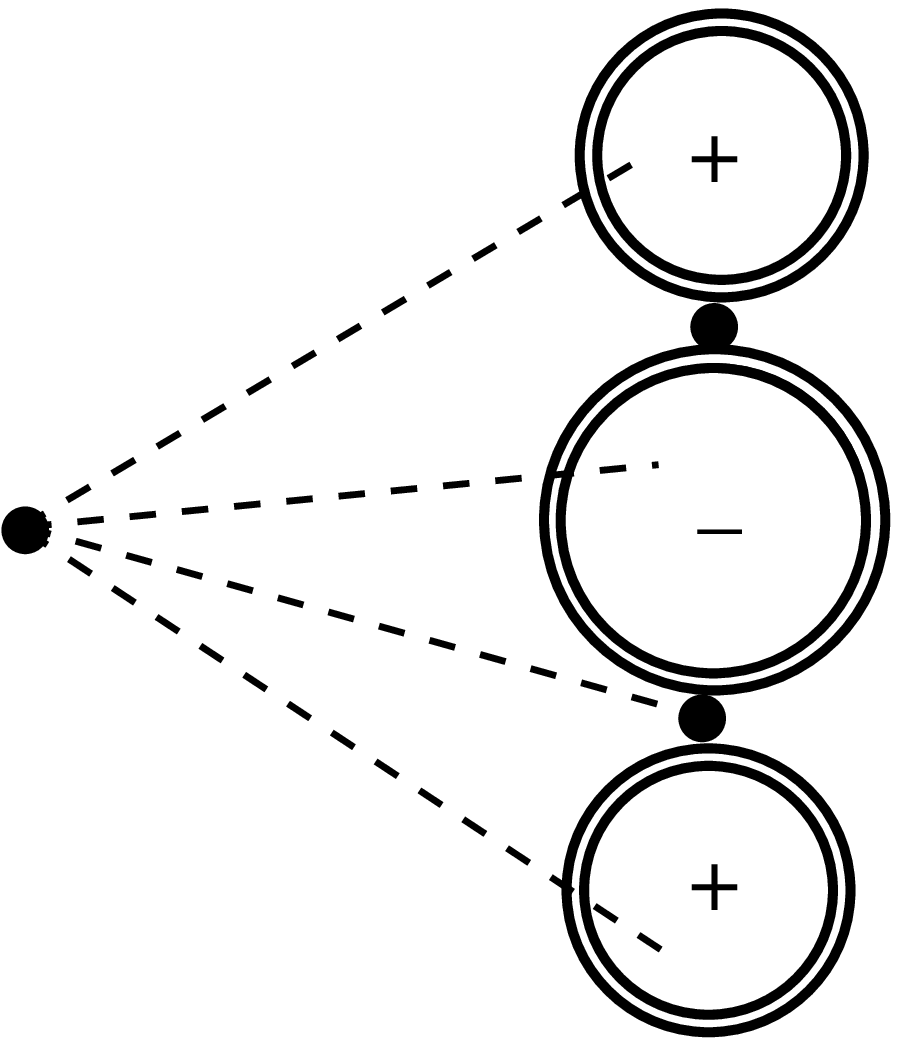}
}\quad
\subfigure[unbalancing bond]
{
\includegraphics*[scale=0.29]{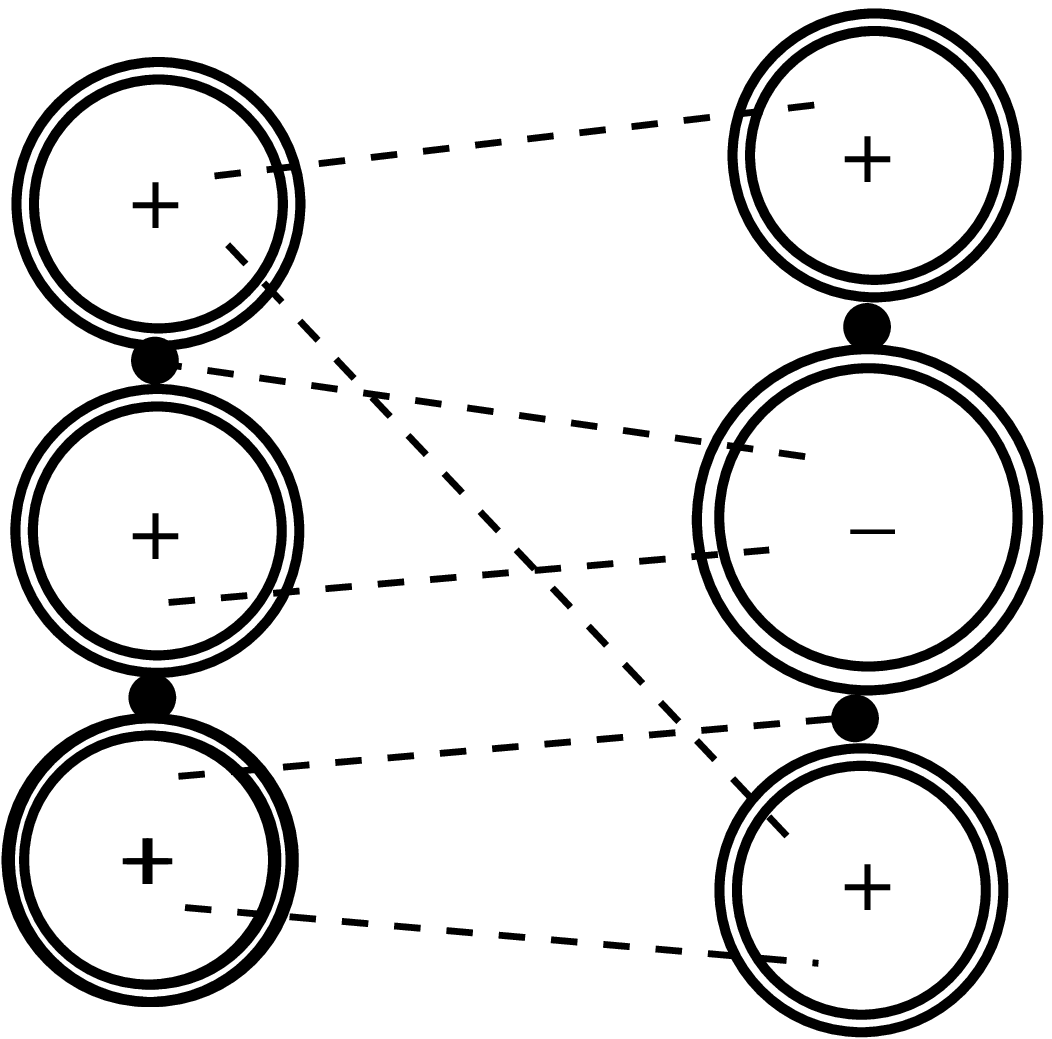}
}} \caption{Bonds in Tangled Signed Graphs}
\label{fig_bonds_tangled_signed_graphs}
\end{center}
\end{figure}
Any bridge $B$ of $Y$ in $M(\Sigma)$ will correspond to a 2-connected subgraph $\Sigma \dto B$ in $\Sigma$, 
which will be a block of $\Sigma \dof Y$. Moreover $\Sigma \dof Y$ will contain at most one unbalanced block.
Note that the only case in which a block of $\Sigma\dof Y$ does not correspond to a  separator of $M(\Sigma) \dof Y$, 
is when the block is unbalanced and a B-necklace (i.e. see (b) in Figure~\ref{fig_bonds_tangled_signed_graphs}). 
In this case the blocks within the B-necklace in the signed graph are the separators in the matroid.

We observe that if $Y$ is either of type (a) or (b), then $M(\Sigma) \dof Y$ is graphic since all of its separators 
have a balanced signed-graphic representation. Let us call {\em graphic} any cocircuit $Y$ of a binary matroid $M$ such that
$M\dof Y$ is a graphic matroid. Therefore if  $M$ is signed-graphic and $Y$ is a non-graphic cocircuit, we know that $Y$ 
will be a bond of type (c) or (d) only, in any signed graph $\Sigma$ such that $M=M(\Sigma)$.
As it turns out non-graphic cocircuits have similar structural characteristics to cocircuits of graphic matroids, and as 
it will be demonstrated in  section~\ref{subsubsec_nongraphic} they provide a means of decomposing binary signed-graphic matroids.

\subsubsection{Graphic Cocircuits}\label{subsubsec_graphic}
We know that all graphic matroids are signed-graphic. Two important theorems which associate signed-graphic matroids 
with cographic matroids and regular matroids in terms of excluded minors have been shown by Slilaty in~\cite{Slilaty:2005b}. 
Specifically, of the 35 forbidden minors for projective planar graphs 29 are non-separable. These 29 graphs, which we call 
$G_1,G_2,\ldots,G_{29}$, can be found in~\cite{Arch:81, MoTh:01}. The family of the cographic matroids of these 29 
non-separable graphs $\mathcal{M}=\{ M(G_1),M(G_2),\ldots,M(G_{29}) \}$ forms the complete list of the cographic 
excluded minors for signed-graphic matroids. 

\begin{theorem}[Slilaty~\cite{Slilaty:2005b}] \label{th_ree}
A cographic matroid $M$ is signed-graphic if and only if $M$ has no minor isomorphic to $M^{*}(G_1),\ldots,M^{*}(G_{29})$. 
\end{theorem}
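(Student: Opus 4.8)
The plan is to establish both directions separately. The ``only if'' direction is the easy half: if $M$ is a cographic matroid that is also signed-graphic, then since the class of signed-graphic matroids is minor-closed, every minor of $M$ is signed-graphic; in particular none of $M^{*}(G_1),\ldots,M^{*}(G_{29})$ can be a minor of $M$, because each $M^{*}(G_i)$ is by construction \emph{not} signed-graphic (this is precisely the content of the family $\mathcal{M}$ being a list of \emph{excluded} minors, and must be verified separately — see below). So the work is entirely in the ``if'' direction and in the preliminary verification that each $M^{*}(G_i)$ is a genuine non-signed-graphic cographic matroid.

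For the ``if'' direction I would argue contrapositively via the correspondence between cographic matroids and graphs: write $M = M^{*}(G)$ for a connected graph $G$ (we may assume $G$ is $2$-connected after splitting into blocks, since a matroid is signed-graphic iff each of its connected components is, and the cographic matroid of $G$ is the direct sum of the cographic matroids of its blocks). The key input is Slilaty's structural result identifying exactly which graphs $G$ have the property that $M^{*}(G)$ is signed-graphic: these are conjectured/known to be essentially the \emph{projective-planar} graphs. So the heart of the proof is the equivalence
\[
M^{*}(G) \text{ is signed-graphic} \iff G \text{ is projective-planar}.
\]
Granting this, the theorem reduces to the excluded-minor characterization of projective-planar graphs: a graph is projective-planar if and only if it contains none of the $35$ Archdeacon forbidden minors (topological minors), of which $29$ — the graphs $G_1,\ldots,G_{29}$ — are non-separable (the other six are disconnected or have cut vertices, hence are handled by the blockwise reduction and do not contribute new cographic excluded minors). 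One then translates ``$G$ has a $G_i$ minor'' into ``$M^{*}(G)$ has an $M^{*}(G_i)$ minor'' using the standard duality between graph minors and cographic matroid minors (contraction in $G$ $\leftrightarrow$ deletion in $M^{*}(G)$, and vice versa), together with the fact that for $3$-connected graphs the topological-minor and minor relations coincide on the relevant small graphs.

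The main obstacle is the equivalence ``$M^{*}(G)$ signed-graphic $\iff$ $G$ projective-planar.'' The forward implication is the substantive direction: one must show that if the bond matroid of $G$ arises from some signed graph $\Sigma$, then $G$ embeds in the projective plane. Here Slilaty's argument uses the structure theory of signed-graphic matroids — in particular Theorems~\ref{th_Zasl11} and~\ref{th_minsig} and the classification of bonds — to reconstruct, from a signed-graphic representation of $M^{*}(G)$, an actual embedding of $G$ in $\mathbb{R}P^2$ (roughly, a signed graph ``is'' a graph on the projective plane with the sign recording whether a cycle is orientation-preserving, and its lift/frame matroid dualizes to the bond matroid of the underlying projective-planar graph). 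The reverse implication — every projective-planar $G$ has $M^{*}(G)$ signed-graphic — is more combinatorial: one exhibits the signed graph directly from a projective-plane embedding. Since the full details of this equivalence are exactly the content of~\cite{Slilaty:2005b}, in this paper I would cite it and confine the proof here to the minor-translation bookkeeping and the two quick closure/non-representability checks described above. The one genuinely delicate point to flag is the non-separable reduction: one must check that the six \emph{separable} forbidden minors of projective-planar graphs never give rise to a \emph{new} minor-minimal non-signed-graphic cographic matroid — i.e. that any cographic matroid containing such a separable obstruction already contains one of the $M^{*}(G_i)$ as a minor — which follows from the block decomposition together with the observation that the $2$-connected ``pieces'' of each separable obstruction each contain one of $G_1,\ldots,G_{29}$.
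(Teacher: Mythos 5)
The paper offers no proof of this statement: it is imported verbatim from Slilaty~\cite{Slilaty:2005b}, so there is no in-paper argument to compare yours against. Your overall strategy---minor-closedness of signed-graphic matroids for the ``only if'' half, and for the ``if'' half the reduction of ``$M^{*}(G)$ is signed-graphic'' to ``$G$ is projective-planar'' followed by Archdeacon's excluded-minor list---is indeed the route the cited reference takes, and deferring the topological equivalence to that reference is reasonable, since that equivalence is precisely its main theorem.

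There is, however, one concretely wrong step: your treatment of the six \emph{separable} forbidden minors for the projective plane. You assert that ``the $2$-connected pieces of each separable obstruction each contain one of $G_1,\ldots,G_{29}$.'' They do not: the six separable obstructions are the disjoint unions and one-vertex joins of pairs drawn from $\{K_5, K_{3,3}\}$, whose blocks are $K_5$ and $K_{3,3}$---both projective-planar, hence with signed-graphic cographic matroids containing none of the $M^{*}(G_i)$. The correct reason these six contribute nothing is the opposite of what you wrote: the class of signed-graphic matroids is closed under direct sums, so $M^{*}$ of each separable obstruction is a direct sum of signed-graphic matroids and is itself signed-graphic; it is simply not an obstruction on the matroid side. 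This also shows that the equivalence ``$M^{*}(G)$ signed-graphic $\iff$ $G$ projective-planar,'' which you state for connected $G$, fails for separable $G$ (the one-vertex join of two copies of $K_5$ is a counterexample); it can only be asserted for $2$-connected $G$. Your completeness argument therefore needs an additional ingredient: after reducing to a $2$-connected graph $H$ with $M^{*}(H)$ non-signed-graphic (legitimate, since every excluded minor for a minor-closed, direct-sum-closed class is connected), you must know that a $2$-connected non-projective-planar graph necessarily contains one of the twenty-nine \emph{non-separable} obstructions as a minor, i.e.\ it cannot avoid all of them while containing only the separable ones. As written, your blockwise reduction silently assumes this graph-theoretic fact, and it is exactly the point your (incorrect) closing observation was meant to discharge.
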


Clearly, since cographic matroids are regular matroids we 
expect the list of regular excluded minors for signed-graphic matroids to contain the matroids in $\mathcal{M}$ 
and some other matroids. It is shown in~\cite{SliQinZh:09} that those other matroids are the $R_{15}$ and $R_{16}$ matroids whose binary compact representation matrices 
are the following
\begin{equation*}
A_{R_{15}}=
\left[ \begin{array}{rrrrrrrr}
1  & 0  & 1  &  0 & 0  & 0  & 0  & 1  \\
0  & 0  & 0  &  1 & 1  & 0  & 1  & 0  \\
1  & 1  & 0  &  0 & 1  & 1  & 0  & 0  \\ 
1  & 1  & 0  &  0 & 0  & 1  & 1  & 0  \\
0  & 1  & 1  &  1 & 1  & 0  & 0  & 0  \\ 
0  & 1  & 1  &  1 & 1  & 1  & 0  & 0  \\
0  & 1  & 1  &  1 & 1  & 1  & 0  & 1  
\end{array} \right], 
A_{R_{16}}=
\left[ \begin{array}{rrrrrrrr}
0  & 1  & 1  &  0 & 1  & 0  & 0  & 0  \\
0  & 0  & 0  &  0 & 1  & 1  & 1  & 0  \\
0  & 1  & 1  &  0 & 1  & 1  & 1  & 0  \\ 
0  & 0  & 0  &  1 & 1  & 1  & 0  & 0  \\
1  & 1  & 0  &  1 & 1  & 1  & 0  & 0  \\
1  & 0  & 1  &  1 & 0  & 0  & 0  & 0  \\
1  & 1  & 0  &  0 & 0  & 0  & 0  & 1  \\
1  & 0  & 1  &  0 & 0  & 0  & 0  & 1  
\end{array} \right].
\end{equation*}
Moreover, the binary excluded minors for signed-graphic matroids can be easily obtained by adding to the list of the 31 regular excluded minors of signed-graphic matroids the binary excluded minors for regular matroids (i.e. $F_7$ and $F_7^{*}$), since any binary signed-graphic matroid is also regular.
\begin{theorem}
A binary matroid $M$ is signed-graphic if and only if $M$ has no minor isomorphic to $M^{*}(G_1),\ldots,M^{*}(G_{29}), R_{15}$, $R_{16}$, $F_7$ or $F_7^{*}$. 
\end{theorem}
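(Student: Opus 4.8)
The plan is to derive this binary excluded-minor characterization from the regular one by inserting Tutte's classical description of the gap between binary and regular matroids. I would first record the facts the argument needs. Every binary signed-graphic matroid is regular, since it is representable over every field; the class of signed-graphic matroids is closed under taking minors, which is immediate from Theorem~\ref{th_minsig}, as that result gives $M(\Sigma\dof S)=M(\Sigma)\dof S$ and $M(\Sigma/S)=M(\Sigma)/S$; a binary matroid is regular if and only if it has no minor isomorphic to $F_7$ or $F_7^{*}$ (Tutte); and, by the regular excluded-minor characterization of~\cite{SliQinZh:09} built on Theorem~\ref{th_ree}, a regular matroid is signed-graphic if and only if it has no minor isomorphic to $M^{*}(G_1),\ldots,M^{*}(G_{29}),R_{15}$ or $R_{16}$. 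In particular, applying the last statement to each of these matroids (each of which is regular) shows that none of $M^{*}(G_1),\ldots,M^{*}(G_{29}),R_{15},R_{16}$ is itself signed-graphic.

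For the ``only if'' direction, suppose $M$ is binary and signed-graphic. Then $M$ is regular, so Tutte's theorem rules out an $F_7$- or $F_7^{*}$-minor; and since signed-graphic matroids are minor-closed while none of $M^{*}(G_1),\ldots,M^{*}(G_{29}),R_{15},R_{16}$ is signed-graphic, $M$ cannot have any of these as a minor either. Hence $M$ has no minor in the stated list.

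For the ``if'' direction, suppose $M$ is binary with no minor isomorphic to a matroid in the list. Having no $F_7$- or $F_7^{*}$-minor, $M$ is regular by Tutte's theorem; and having no minor isomorphic to $M^{*}(G_1),\ldots,M^{*}(G_{29}),R_{15}$ or $R_{16}$, $M$ is signed-graphic by the regular characterization. This completes the equivalence.

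I do not expect any genuine obstacle: the theorem is a routine amalgamation of the regular case with the binary-to-regular bridge, and the only point needing explicit mention is that one may legitimately pass to an excluded minor on the ``binary signed-graphic'' side, which is precisely the minor-closedness furnished by Theorem~\ref{th_minsig}. All of the substance is carried by Theorem~\ref{th_ree} and its regular strengthening in~\cite{SliQinZh:09}, together with Tutte's regularity criterion.
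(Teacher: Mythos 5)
Your proof is correct and follows essentially the same route the paper sketches: it combines the regular excluded-minor characterization from~\cite{SliQinZh:09} (Theorem~\ref{th_ree} plus $R_{15}$, $R_{16}$) with Tutte's $F_7$/$F_7^{*}$ criterion for regularity and the minor-closedness of signed-graphic matroids. The only blemish is your justification that a binary signed-graphic matroid is regular ``since it is representable over every field''---the standard argument is that signed-graphic matroids are ternary, and binary plus ternary implies regular---but the fact itself is exactly what the paper asserts.
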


The following two lemmas are essential for the proof of the main result of this section which characterizes the binary 
matroids with graphic cocircuits. 

\begin{lemma} \label{lem_ll1}
If a matroid  $M$ is isomorphic to $M^{*}(G_{17})$ or  $M^{*}(G_{19})$ then for any cocircuit 
$Y\in{\mathcal{C}(M^{*})}$, the matroid $M\dof{Y}$ is graphic. 
\end{lemma}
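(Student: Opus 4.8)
The plan is to verify the statement by a direct, computational argument on the two specific matroids $M^{*}(G_{17})$ and $M^{*}(G_{19})$, exploiting duality: a cocircuit $Y$ of $M = M^{*}(G_i)$ is a circuit of $M^{*} = M(G_i)$, i.e. a cycle of the graph $G_i$ (or more generally an element of the cycle space, but since we may take $Y$ to be a cocircuit it is a single cycle/bond in the dual sense). So I would first translate ``$M \dof Y$ is graphic'' into ``$M^{*}/Y$ is cographic'', since $(M\dof Y)^{*} = M^{*}/Y$ and a matroid is graphic iff its dual is cographic. Thus the claim becomes: for every cycle $Y$ of $G_{17}$ (resp. $G_{19}$), the matroid $M(G_i)/Y$ is cographic, equivalently $M(G_i / Y')$ for a suitable contraction is cographic — but more usefully, contraction in the cographic/graphic world: $M(G)/Y$ where $Y$ is a bond of $G$ corresponds to a graph contraction, so I want $M^{*}(G_i)\dof Y$ to be graphic, and by the standard correspondence $M^{*}(G)\dof Y = (M(G)/Y)^{*}$, this holds iff $M(G)/Y$ is cographic, iff $M(G)/Y$ has no $M(K_5)$ or $M(K_{3,3})$ minor.

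The key steps, in order: (1) Identify $G_{17}$ and $G_{19}$ explicitly from the references \cite{Arch:81, MoTh:01} (the $35$ forbidden minors for projective-planar graphs, the $29$ non-separable ones), recording their vertex/edge data. (2) Recall that the cocircuits of $M^{*}(G_i)$ are exactly the bonds of $G_i$, so ranging over $Y \in \mathcal{C}(M^{*}) = \mathcal{C}(M(G_i))$ means ranging over the cycles of $G_i$. (3) For each such cycle $Y$, show $M^{*}(G_i)\dof Y$ is graphic. Here I would use the contraction description: $M^{*}(G_i)\dof Y = M^{*}(G_i \dof Y)$ only when $Y$ is a coindependent... no — rather, I would directly use that deletion in $M^{*}(G)$ corresponds to contraction in $M(G)$: $M^{*}(G)\dof Y = (M(G)/Y)^{*}$. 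So $M^{*}(G_i)\dof Y$ graphic $\iff$ $M(G)/Y$ cographic. Now $M(G_i)/Y$ for a cycle $Y$: contracting all edges of a cycle in a graph identifies the cycle's vertices to one point and may create loops; the resulting matroid is $M(G_i')$ where $G_i'$ is $G_i$ with $Y$ contracted. So I need: $G_i / Y$ has cographic cycle matroid, i.e. $M(G_i/Y)$ is cographic, i.e. $G_i / Y$ is a planar graph (since $M(H)$ is cographic iff $H$ is planar), for every cycle $Y$. (4) Therefore the lemma reduces to the combinatorial claim: \emph{contracting any cycle of $G_{17}$ (resp. $G_{19}$) yields a planar graph.} This is plausible because $G_{17}, G_{19}$ are minor-minimal non-planar-in-the-projective-plane graphs, which are ``barely'' non-planar; contracting a whole cycle is a substantial operation that should destroy the obstruction. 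I would verify it by checking, for each cycle (up to the automorphism group of $G_i$, which cuts down the cases drastically), that $G_i/Y$ contains no $K_5$ or $K_{3,3}$ minor — e.g. by exhibiting a planar embedding or by a short Kuratowski-style argument.

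The main obstacle I anticipate is step (4): there could be many cycles to check, and organizing the case analysis cleanly requires knowing the automorphism groups of $G_{17}$ and $G_{19}$ and perhaps a structural observation (e.g. every cycle passes through a fixed small set of vertices, or $G_i$ has a vertex/edge whose deletion already planarizes it, so that after contracting a cycle one can find a planar embedding directly). A cleaner route, if available, would be to avoid enumerating cycles: show that $G_i$ has the property that $G_i / e$ is planar for \emph{every} edge $e$ on some cycle, or that $G_i$ minus any edge is outerplanar-like, so that any cycle contraction factors through planarizing contractions. Alternatively, one can argue at the matroid level: $M^{*}(G_i)$ has low rank/corank, so $M^{*}(G_i)\dof Y$ is a small matroid, and one checks directly it has no $M^{*}(K_5)$, $M^{*}(K_{3,3})$, or $F_7^{*}$ minor (graphicness of a binary matroid, by the excluded-minor characterization). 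The bookkeeping of which graphs $G_{17}, G_{19}$ are, and matching the indexing of \cite{MoTh:01}, is the one place where care is needed; everything downstream is a finite check.
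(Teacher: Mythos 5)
Your reduction is exactly the one the paper uses: dualize via $M\backslash Y=(M^{*}/Y)^{*}$, note that $Y$ ranges over the cycles of $G_{i}$, and reduce the lemma to showing that $M(G_{i})/Y=M(G_{i}/Y)$ is cographic, i.e.\ has no $M(K_5)$ or $M(K_{3,3})$ minor, for every cycle $Y$. Up to that point the proposal is sound.

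The gap is that the decisive step is never carried out: you declare it ``a finite check'' and defer both the identification of $G_{17}$ and $G_{19}$ and the verification itself, proposing to enumerate cycles up to automorphism or to exhibit planar embeddings case by case. The paper's proof is precisely the content you are missing, and it avoids any enumeration. One observes that $G_{17}\cong K_{3,5}$ and $G_{19}\cong K_{4,4}\backslash e$ (so it suffices to treat all circuits of $M(K_{3,5})$ and $M(K_{4,4})$), and that these graphs are bipartite and $3$-connected, hence every cycle $Y$ has at least $4$ edges. Consequently $K_{3,5}/Y$ and $K_{4,4}/Y$ have at most $5$ vertices, so their cycle matroids have rank at most $4<r(M(K_{3,3}))$, ruling out an $M(K_{3,3})$ minor outright; and for $M(K_5)$, when $|Y|=4$ the contractions are two explicit small multigraphs whose simple minors have at most $7$ or $8$ elements, fewer than the $10$ elements of the simple matroid $M(K_5)$, while for $|Y|>4$ the rank bound again suffices. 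Without the isomorphisms to complete bipartite graphs and this girth-plus-counting argument (or some executed substitute for them), your proposal establishes only the framework of the lemma, not the lemma itself.
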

\begin{proof}
Since $M\dof{Y}=(M^{*}/Y)^{*}$ we can equivalently show that for any circuit $Y$ of $M^{*}$, the matroid $M^{*}/Y$ is graphic. 
The matroid $M^{*}$ is cographic and thus regular. By a result of Tutte in~\cite{Tutte:1959}, a regular matroid is cographic if 
and only if it has no minor isomorphic to $M(K_5)$ or $M(K_{3,3})$. Therefore, it is enough to show that for any circuit $Y$ 
of $M(G_{17})$  ($M(G_{19})$) the matroid $M(G_{17})/Y$  ($M(G_{19})/Y$) has no minor isomorphic to $M(K_5)$ or $M(K_{3,3})$. 
Observe that $G_{17}$ is isomorphic to the graph $K_{3,5}$ and $G_{19}$ is isomorphic to $K_{4,4}\dof{e}$ where $e$ is any 
element of $K_{4,4}$. Since $M(G_{19})$ is a graphic matroid  we have that $M(G_{19})\cong{M(K_{4,4}\dof{e})}=M(K_{4,4})\dof{e}$; 
this implies that  $M(K_{4,4})$ has a minor isomorphic to $M(G_{19})$. Thus, it suffices to prove that for any circuit 
$Y_1$ ($Y_2$) of $M(K_{3,5})$  ($M(K_{4,4})$) the matroid  $M(K_{3,5})/Y_1=M(K_{3,5}/Y_1)$  ($M(K_{4,4})/Y_2=M(K_{4,4}/Y_2)$) has no 
minor isomorphic to $M(K_5)$ or $M(K_{3,3})$.

We  turn our attention to the graphs $K_{3,5}$ and $K_{4,4}$. We have that $Y_1 (Y_2)$ is a cycle of  $G_{17} (G_{19})$. By the
graphical representation of $K_{3,5}$ and  $K_{4,4}$ we can easily observe that $K_{3,5}$ and  $K_{4,4}$  have no cycle of 
cardinality less than four, since they are both 3-connected bipartite graphs. This means that $K_{3,5}/Y_{1}$ and $K_{4,4}/Y_{2}$ 
have at most five vertices each, which implies that  $M(K_{3,5}/{Y_1})$ and  $M(K_{4,4}/Y_2)$ have rank at most $4$ which is 
less than the rank of $M(K_{3,3})$. Therefore, $M(K_{3,5}/{Y_1})$ and  $M(K_{4,4}/Y_2)$ can not have a minor isomorphic to $M(K_{3,3})$. 

We now  show  that $M(K_{3,5}/{Y_1})$ and  $M(K_{4,4}/Y_2)$ can not have a minor isomorphic to $M(K_5)$. Let us suppose that 
$Y_1$ and $Y_2$ are circuits of cardinality four; thus $Y_1$ and $Y_2$ are cycles of cardinality four in $K_{3,5}$ and 
$K_{4,4}$ respectively. Observe now that  $K_{3,5}/Y_1$  and $K_{4,4}/Y_2$ are isomorphic to the 
graphs $\bar{G}$ and $\hat{G}$ respectively (see Figure~\ref{fig:g2}),  since $K_{3,5}$ and $K_{4,4}$ are complete bipartite graphs. 
Furthermore, parallel edges of a graph correspond to parallel elements in the associated graphic matroid. Therefore, any 
simple minor of $M(\bar{G})$ or $M(\hat{G})$ has at most seven or eight elements respectively. The matroid $M(K_5)$ is 
simple and has ten elements. Therefore  $M(K_5)$ can not be a minor of $M(\bar{G})=M(K_{3,5}/{Y_1})$ or $M(\hat{G})=M(K_{4,4}/Y_2)$. 
Assume now that $Y_1$ and $Y_2$ have more than four elements. Then as it was proved in the previous 
paragraph that  $M(K_{3,5}/{Y_1})$ and  $M(K_{4,4}/Y_2)$ can not have a minor isomorphic to $M(K_{3,3})$, 
we can easily show that these matroids have no minor isomorphic to $M(K_{5})$.
\begin{figure}[h] 
\begin{center}
\centering
\psfrag{G1}{\footnotesize $\bar{G}$}
\psfrag{G2}{\footnotesize $\hat{G}$}
\includegraphics*[scale=0.3]{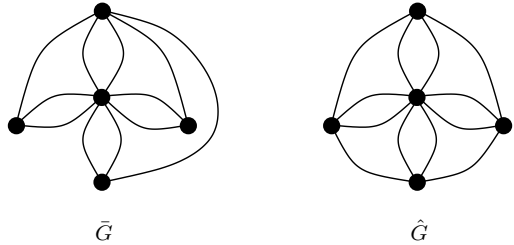}
\end{center}
\caption{The graphs $\bar{G}$ and  $\hat{G}$.}
\label{fig:g2}
\end{figure} 
\end{proof}

\begin{lemma}\label{lem_1}
If $N$ is a minor of the matroid $M$ then for any cocircuit $C_{N}\in\mathcal{C}(N^{*})$ there
exists a cocircuit $C_{M}\in\mathcal{C}(M^{*})$ such that $N\dof C_{N}$ is a minor of 
$M\dof C_{M}$. 
\end{lemma}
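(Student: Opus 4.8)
The plan is to reduce the statement to the two generators of the minor relation --- single-element deletion and single-element contraction --- and handle each case by a direct argument with cocircuits. Since $N$ being a minor of $M$ means $N = M \dof D / C$ for disjoint $D, C \subseteq E(M)$, and any such minor can be reached by a sequence of single-element deletions and contractions, it suffices by induction on $|D|+|C|$ to prove the claim when $N = M \dof e$ or $N = M / e$ for a single element $e \in E(M)$. The base case $N = M$ is trivial (take $C_M = C_N$). So the real content is the two one-step cases.

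First I would handle the deletion step: suppose $N = M \dof e$ and let $C_N \in \mathcal{C}(N^*) = \mathcal{C}^*(M \dof e)$. Cocircuits of $M \dof e$ are the minimal nonempty sets of the form $C \setminus e$ where $C$ is a cocircuit of $M$; equivalently, $C_N$ is a cocircuit of $M \dof e$ iff $C_N$ or $C_N \cup e$ is a cocircuit of $M$. In the first subcase take $C_M = C_N$; then $M \dof C_M$ has $e$ in its ground set and $N \dof C_N = M \dof C_N \dof e$, which is literally $M \dof C_M$ deleted by $e$, hence a minor of $M \dof C_M$. In the second subcase $C_M := C_N \cup e$ is a cocircuit of $M$, and $N \dof C_N = M \dof e \dof C_N = M \dof (C_N \cup e) = M \dof C_M$, so again (trivially) a minor.

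Next the contraction step: suppose $N = M / e$ and $C_N \in \mathcal{C}^*(M / e)$. Here it is cleaner to dualize, since cocircuits of $M/e$ are circuits of $(M/e)^* = M^* \dof e$; but I think it is just as quick to argue directly: a cocircuit of $M/e$ is exactly a cocircuit of $M$ that does not contain $e$ (contraction in $M$ is deletion in $M^*$, and deleting $e$ from $M^*$ keeps precisely the circuits of $M^*$ avoiding $e$). So $C_M := C_N$ is a cocircuit of $M$ with $e \notin C_M$, and since $e \notin C_M$ the operations of deleting $C_M$ and contracting $e$ commute: $N \dof C_N = (M/e) \dof C_M = (M \dof C_M)/e$, which is a minor of $M \dof C_M$. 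Assembling the induction: if $N = M \dof D / C$, peel off the elements of $D$ and $C$ one at a time, at each stage pulling the cocircuit back through one operation via the appropriate case above, until we land on a cocircuit $C_M$ of $M$ itself with $N \dof C_N$ a minor of $M \dof C_M$.

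The only genuinely delicate point --- and the place to be careful --- is bookkeeping in the induction: when we move from an intermediate minor $M'$ to $M' \dof e$ or $M'/e$, the cocircuit we carry must be a cocircuit of $M'$ (not just of the final $N$), and we must verify that $N \dof C_N$ remains a minor of $M' \dof C_{M'}$ at each stage, which follows because deletion and contraction of elements \emph{outside} the deleted cocircuit commute with deleting that cocircuit. One should also note that $C_N \ne \emptyset$ is preserved: in every case above $C_{M'} \supseteq C_N$ or $C_{M'} = C_N$, and a cocircuit is by definition nonempty, so no degeneracy arises. No appeal to binarity or to signed-graphic structure is needed here --- this is a general matroid fact --- which is worth remarking since the lemma will later be applied to the excluded minors $M^*(G_i)$, $R_{15}$, $R_{16}$ in combination with Lemma~\ref{lem_ll1}.
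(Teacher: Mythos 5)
Your proof is correct and rests on the same underlying fact as the paper's: every cocircuit $C_N$ of $N=M\backslash X/Y$ lifts to a cocircuit $C_M$ of $M$ with $C_N\subseteq C_M$ and $C_M-C_N$ contained in the deleted set $X$, whence $M\backslash C_M = M\backslash(C_M-C_N)\backslash C_N \succeq N\backslash C_N$. The paper obtains this in one step from $N^*=M^*/X\backslash Y$ and the standard description of circuits of a contraction, whereas you reassemble the same statement by induction on single-element deletions and contractions; the two are equivalent in content, and your case analysis (including the commuting of $\backslash C_M$ with $/e$ for $e\notin C_M$) is sound.
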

\begin{proof}
If $N=M\dof X / Y$ then by duality $N^{*}=M / X \dof Y$. Therefore by the definitions
of contraction and deletion of a set, we have
that for any cocircuit $C_{N}\in\mathcal{C}(N^{*})$ there exists a cocircuit $C_{M}\in\mathcal{C}(M^{*})$ such
that 
\begin{itemize}
\item[(i)] $C_{N} \subseteq C_{M}$,
\item[(ii)] $E(N) \cap C_{M} = C_{N}$,
\end{itemize}
which in turn imply that  $C_{M} - C_{N} \subseteq X$. So we have 
\[
M\dof C_{M}=  M \dof \{C_{M} - C_{N}\} \dof C_{N} \succeq N \dof C_{N}
\]
\end{proof}

We are now ready to prove the main result of this section.

\begin{theorem}\label{th_ll3}
Let $M$ be a binary matroid such that all its cocircuits are graphic. Then, $M$ is signed-graphic if and only if $M$ has no minor 
isomorphic to $M^{*}(G_{17})$, $M^{*}(G_{19})$, $F_7$ or $F_7^{*}$.
\end{theorem}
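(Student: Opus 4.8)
The plan is to prove the equivalence via the excluded-minor characterisation of binary signed-graphic matroids together with the two preceding lemmas. The forward direction is immediate: the four matroids $M^{*}(G_{17})$, $M^{*}(G_{19})$, $F_7$ and $F_7^{*}$ all appear in (or are implied by) the list of binary excluded minors for signed-graphic matroids, and the class of signed-graphic matroids is minor-closed, so a signed-graphic matroid can contain none of them. Hence I would spend essentially all the effort on the converse.

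For the converse, suppose $M$ is a binary matroid all of whose cocircuits are graphic, and suppose $M$ has no minor isomorphic to $M^{*}(G_{17})$, $M^{*}(G_{19})$, $F_7$ or $F_7^{*}$. The goal is to show $M$ is signed-graphic, and by the binary excluded-minor theorem it suffices to show $M$ has no minor isomorphic to any of $M^{*}(G_1),\ldots,M^{*}(G_{29})$, $R_{15}$, $R_{16}$, $F_7$ or $F_7^{*}$. Since $F_7$ and $F_7^{*}$ are already excluded by hypothesis, I must rule out the $27$ cographic excluded minors other than $M^{*}(G_{17})$ and $M^{*}(G_{19})$, plus $R_{15}$ and $R_{16}$. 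The key observation is that $M^{*}(G_{17})$ and $M^{*}(G_{19})$ are distinguished among the cographic excluded minors by Lemma~\ref{lem_ll1}: they have the property that deleting \emph{any} cocircuit yields a graphic matroid. The plan is to show that each of the other excluded minors $N$ in the list has at least one cocircuit $C_N$ such that $N\dof C_N$ is \emph{not} graphic. If that holds, then by Lemma~\ref{lem_1}, whenever such an $N$ is a minor of $M$, there is a cocircuit $C_M$ of $M$ with $N\dof C_N$ a minor of $M\dof C_M$; since $N\dof C_N$ is non-graphic and graphicness is minor-closed, $M\dof C_M$ would be non-graphic, contradicting the hypothesis that all cocircuits of $M$ are graphic. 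Therefore $M$ can contain none of those excluded minors, hence only possibly $M^{*}(G_{17})$, $M^{*}(G_{19})$, $F_7$, $F_7^{*}$, which are excluded by hypothesis; so $M$ is signed-graphic.

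Concretely, the computational heart of the argument is: for each of $M^{*}(G_i)$ with $i \notin \{17,19\}$ (and such that $M^{*}(G_i)$ is actually a candidate minor, i.e. not itself having all cocircuits graphic), and for each of $R_{15}$ and $R_{16}$, exhibit a single cocircuit whose deletion is non-graphic — for instance by exhibiting in the resulting matroid a minor isomorphic to $M(K_5)$ or $M(K_{3,3})$, or by checking it against Tutte's characterisation. For the cographic $M^{*}(G_i)$ this amounts to finding, in the graph $G_i$, a bond whose contraction-dual is non-graphic; equivalently, finding a circuit $Y$ of $M(G_i)$ such that $M(G_i)/Y$ is non-cographic, i.e. has an $M(K_5)$ or $M(K_{3,3})$ minor. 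One expects that only $G_{17}\cong K_{3,5}$ and $G_{19}\cong K_{4,4}\dof e$ fail this — consistent with Lemma~\ref{lem_ll1} — and that for all other $G_i$ such a circuit exists because these graphs are ``less symmetric'' and contain short cycles or denser substructure. For $R_{15}$ and $R_{16}$ one works directly with the given representation matrices, picks a cocircuit, deletes it, and locates an $M(K_5)$ or $M(K_{3,3})$ minor in the residual matroid.

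The main obstacle is precisely this case analysis: one must verify, for on the order of twenty-nine cographic excluded minors plus two sporadic regular ones, that a non-graphic-cocircuit witness exists, which is a finite but laborious check. The conceptual content is light — it is entirely carried by Lemmas~\ref{lem_ll1} and~\ref{lem_1} and the excluded-minor theorem — but making the verification rigorous requires either careful hand computation on each $G_i$ (using the explicit list from~\cite{Arch:81, MoTh:01}) or an appeal to a computer check. A secondary subtlety worth flagging is that one should double-check that every $G_i$ with $i\neq 17,19$ really does admit such a cocircuit as a genuine minor-obstruction and that none of them happens to share Lemma~\ref{lem_ll1}'s ``all cocircuits graphic'' property; if some additional $G_i$ did, the excluded-minor list in the theorem statement would have to include it, so part of the work is confirming that the statement as written is exactly right.
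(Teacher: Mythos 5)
Your proposal is correct and follows essentially the same route as the paper: the forward direction via the excluded-minor characterisation, and the converse by showing (via Lemma~\ref{lem_1}) that every binary excluded minor other than $M^{*}(G_{17})$, $M^{*}(G_{19})$, $F_7$, $F_7^{*}$ possesses a non-graphic cocircuit, which the paper likewise disposes of by a finite case analysis (checked with the MACEK software) rather than by hand. The subtlety you flag at the end --- confirming that no other excluded minor shares the ``all cocircuits graphic'' property --- is exactly the content of that computational check.
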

\begin{proof}
The ``only if'' part is clear because of Theorem~\ref{th_ree}.
For the ``if'' part, by way of contradiction, assume that $M$ is not signed-graphic. By Theorem~\ref{th_ree} and the straightforward fact that all cocircuits of $F_7$ and $F_7^{*}$ are graphic, $M$ must contain
a minor $N$ which is isomorphic to some matroid in the set
\[
\mathcal{M}=\{M^{*}(G_1),\ldots,M^{*}(G_{16}),M^{*}(G_{18}),
M^{*}(G_{20}),\ldots,M^{*}(G_{29}), R_{15}^{*}, R_{16}^{*}\}.
\]
By case analysis, verified also by the MACEK software~\cite{Hlileny:07},
it can be shown that for each  matroid  $M'\in \mathcal{M}$ there exists
a cocircuit $Y'\in \mathcal{C}(M'^{*})$ such that the matroid $M'\backslash{Y'}$ 
does contain an $M^{*}(K_{3,3})$ or an $M^{*}(K_{5})$ as a minor. Therefore there exists 
a cocircuit $Y_{N}\in\mathcal{C}(N^{*})$ such  that $N\backslash Y_{N}$ is not graphic. 
Therefore, by Lemma~\ref{lem_1}, there is a cocircuit $Y_{M}\in\mathcal{C}(M^{*})$ such that
$N\backslash Y_{N}$ is a minor of $M\backslash Y_{M}$.  Thus, $M\backslash Y_{M}$
is not graphic which is in contradiction with our assumption that $M$ has graphic cocircuits.
\end{proof}

\subsubsection{Non-Graphic Cocircuits}\label{subsubsec_nongraphic}
The following technical lemma is necessary for the proof of Theorem~\ref{thrm_sep81}.
\begin{lemma} \label{lem_un1}
Let $Y$ be an unbalancing bond or a star bond of a tangled signed graph $\Sigma$ such that the core of $\Sigma\backslash{Y}$ is 
not a B-necklace and let $\Sigma\dto{B}$ be an unbalanced separate of $\Sigma\backslash{Y}$. Then:
\begin{itemize}
\item [(i)] $M(\Sigma)$ is graphic, or
\item [(ii)] there exists a series of switchings on the vertices of $\Sigma$ such that all the edges of the separates 
other than $\Sigma\dto{B}$ become positive and for any $v_i\in{V(\Sigma\dto{B})}$ such that $Y(B,v_i)\neq{\emptyset}$, 
the edges of $Y(B,v_i)$ have the same sign.
\end{itemize}
\end{lemma}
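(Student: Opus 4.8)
\textbf{Proof proposal for Lemma~\ref{lem_un1}.}

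The plan is to analyze the structure of $\Sigma\backslash Y$ using Theorem~\ref{thrm_tangled2}, which tells us that $\Sigma\backslash Y$ has exactly two components, exactly one of which is unbalanced, and that the unbalanced component contains exactly one unbalanced block. Since $\Sigma\dto{B}$ is assumed to be an \emph{unbalanced} separate, $\Sigma\dto{B}$ must be that unique unbalanced block sitting inside the unbalanced component of $\Sigma\backslash Y$; all other separates of $\Sigma\backslash Y$ are balanced blocks. First I would switch within each balanced separate so that every edge of every separate other than $\Sigma\dto{B}$ is positive; this is possible by Proposition~\ref{prop_samematroid}(ii) applied block-by-block (a balanced block can always be switched to be all-positive, and switchings on vertices not in $\Sigma\dto{B}$ do not disturb already-processed blocks or the signs inside $\Sigma\dto{B}$, up to careful ordering along the block-tree of $\Sigma\backslash Y$). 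So the real content is the second clause: after these switchings, for each $v_i\in V(\Sigma\dto{B})$ with $Y(B,v_i)\neq\emptyset$, all edges of $Y(B,v_i)$ carry the same sign — \emph{or} $M(\Sigma)$ is graphic.

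The key idea for the sign-uniformity is to exploit the fact that $C(B,v_i)$ is a \emph{balanced} connected subgraph of $\Sigma\backslash Y$ (it lies in one of the balanced separates, or is a union of balanced blocks hanging off $v_i$), and that it has been switched to be entirely positive. Suppose $y_1,y_2\in Y(B,v_i)$ have opposite signs. By definition of $Y(B,v_i)$, each $y_j$ has an end-vertex in $C(B,v_i)$; let these be $w_1,w_2$. Since $C(B,v_i)\cup\{v_i\}$ is connected and all-positive, there is a positive path from $w_1$ to $w_2$ inside it, and also positive paths from $v_i$ to each $w_j$. Concatenating $y_1$, a positive $w_1$–$w_2$ path, and $y_2$ back to the other ends produces a closed walk whose sign is $\sigma(y_1)\sigma(y_2)=-1$; tracing it carefully yields a negative cycle (or a pair of negative cycles / handcuff) using the two edges of $Y$ together with positive material. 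I would then argue that such a configuration, combined with the unbalanced block $\Sigma\dto{B}$ lying elsewhere in $\Sigma$, either contradicts tangledness (two vertex-disjoint negative cycles, or a balancing vertex appearing) — which is impossible — or forces all negative cycles of $\Sigma$ to pass through a single vertex or a fixed pair of vertices, in which case Proposition~\ref{prop_sgn_graphic}(iii)–(iv) (possibly after a reversing justified by Proposition~\ref{prop_samematroid}(iii)) gives that $M(\Sigma)$ is graphic, landing in alternative (i).

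The main obstacle I anticipate is the bookkeeping in the last step: showing that the negative cycle built from $y_1,y_2$ and the positive path cannot coexist with the unbalanced block $\Sigma\dto{B}$ without either violating tangledness or collapsing the negative-cycle structure to something graphic. One has to handle the two cases of Theorem~\ref{th_sgg2}(ii)–(iii) for where the ``second'' negative cycle (inside $\Sigma\dto{B}$) sits relative to $v_i$ and the path, and rule out or absorb the case where $\Sigma\dto{B}$ shares vertices with $C(B,v_i)$ in a degenerate way. I expect the clean formulation is: either the two negative cycles can be made vertex-disjoint (contradiction with ``no two vertex-disjoint negative cycles'') or every negative cycle of $\Sigma$ meets $\Sigma\dto{B}$ in a controlled way that exhibits a balancing vertex of $\Sigma$ after the appropriate reversing — and ``no balancing vertex'' is again part of the definition of tangled, so this too forces graphicness via the structural propositions. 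A secondary, more routine obstacle is making the simultaneous switchings precise: one should root the block-cut tree of $\Sigma\backslash Y$ at (a vertex of) $\Sigma\dto{B}$ and process balanced blocks in order of increasing distance, so that each switching used to make a block all-positive is performed at a cut-vertex strictly on the far side from $\Sigma\dto{B}$, leaving earlier blocks and all of $\Sigma\dto{B}$ untouched.
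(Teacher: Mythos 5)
Your setup (two components of $\Sigma\backslash Y$, one unbalanced block, switching the balanced separates to all-positive via the block-tree) matches the paper and is fine. The gap is in the sign-uniformity step. Your tangledness argument only works in the sub-case where the two oppositely signed edges $y_1,y_2\in Y(B,v_i)$ both \emph{avoid} $v_i$: then the negative cycle you build from $y_1$, $y_2$ and positive paths in $C(B,v_i)$ and $\Sigma_2$ misses $V(\Sigma\dto{B})$ entirely and is vertex-disjoint from a negative cycle of the unbalanced block, contradicting tangledness. But if one (or both) of $y_1,y_2$ is incident to $v_i$ itself, the negative cycle you construct passes through $v_i\in V(\Sigma\dto{B})$ and therefore meets every negative cycle of the unbalanced block that goes through $v_i$; no vertex-disjointness contradiction arises, and no balancing vertex need appear. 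A configuration with two distinct vertices $v_1,v_2$ of the unbalanced block, each carrying a positive and a negative edge of $Y$ incident to it, evades both obstructions in your plan ("two vertex-disjoint negative cycles" and "balancing vertex") at the purely graph-theoretic level.

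The paper kills exactly this configuration with a matroidal, not graph-theoretic, argument: if $V_B=\{v_i : Y^{+}(B,v_i)\neq\emptyset\neq Y^{-}(B,v_i)\}$ has at least two elements, then the sets $\bigcup_i Y^{-}(B,v_i)$, $\bigcup_i Y^{+}(B,v_i)$ and the individual $Y(B,v_i)$ are all bonds of $(\Sigma\cto(B\cup Y))\dto Y$, hence by Corollary~\ref{cor_corl1} all members of $\pi(M(\Sigma),B,Y)$ --- but they are not pairwise disjoint, contradicting Tutte's result that $\pi$ is a partition of $Y$ when $M$ is binary (and $\Sigma$ tangled forces $M(\Sigma)$ binary by Theorem~\ref{th_sliii}). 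The remaining case $|V_B|=1$ is then reduced, by a separate and fairly long argument (arranging the edges of a negative cycle $C$ not through $v$, contracting a $v_1$--$v_2$ path inside the unbalanced block), to the same non-disjointness contradiction, whence $v$ is a balancing vertex and alternative (i) holds. Your proposal never invokes the binary hypothesis or the structure of $\pi(M,B,Y)$, and without it the central case cannot be closed; this is a missing idea rather than mere bookkeeping.
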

\begin{proof}
Let $\bar{V}_B :=\{v_i\in{V(\Sigma\backslash.{B}})\; : \; Y(B,v_i)\neq{\emptyset}\}$. By Theorem~\ref{thrm_tangled2}, 
we have that $\Sigma\backslash{Y}$ consists of two components $\Sigma_1$ and $\Sigma_2$ and that 
there is exactly one unbalanced block in $\Sigma\backslash{Y}$, which without loss of generality we assume to be contained 
in $\Sigma_1$. Since this unbalanced block is not a B-necklace, its edge-set is a bridge $B$ of $Y$ in $M(\Sigma)$ and
therefore, $\Sigma\dto{B}$ is a separate of $\Sigma\backslash{Y}$. By Proposition~\ref{prop_sgn_graphic}, there exists a series of 
switchings on the vertices of the balanced subgraphs $C(B,v_i)$, for all $v_i\in{V(\Sigma\dto{B})}$, and at the vertices of $\Sigma_2$ such 
that all the edges in $\Sigma_1\backslash{B}$ and $\Sigma_2$ become positive. We call $\Sigma'$, $\Sigma_1'$ and $\Sigma_2'$ 
the signed graphs so-obtained from $\Sigma$, $\Sigma_1$ and $\Sigma_2$, respectively, by applying these switchings. 

For each $v_i\in{V(\Sigma'\dto{B})}$ let $Y^{+}(B,v_i)$ and  $Y^{-}(B,v_i)$ be the positive and negative edges of  
$Y(B,v_i)$, respectively, and 
\[
V_B:=\{v_i\in{\bar{V}_B}\; : \; Y^{+}(B,v_i), Y^{-}(B,v_i)\neq{\emptyset}\}. 
\]

Suppose that $|V_B|\geq{2}$, and let $V_B=\{v_1,\ldots, v_k\}$ for positive integer $k\geq 2$. In $\Sigma'\cto (B\cup{Y})$, 
since $\Sigma_1'\backslash{B}$ and $\Sigma_2'$ consists of only positive edges, every component $C(B,v_i)$ of 
$\Sigma_1'\backslash{B}$ will contract to $v_i$ and $\Sigma_2'$ will contract to a single vertex $u$. For example, 
Figure~\ref{fig_u11} depicts the graph $\Sigma'\cto ({B\cup{Y}})$ obtained from $\Sigma'$  where the dashed lines indicate
the edges of $Y$. Therefore, in $(\Sigma'\cto({B\cup{Y}}))\dto{Y}$, each $Y(B,v_i)$ with $v_i\in{V_B}$ will become a class of 
parallel edges and all the edges in $Y$ will have $u$ as an end-vertex (see Figure~\ref{fig_u11}). Thus, the following 
set $\mathcal{L}$ is a set of bonds of $(\Sigma'\cto (B\cup{Y}))\dto{Y}$:
\[
\mathcal{L}=\{\{Y^{-}(B,v_1),\ldots,Y^{-}(B,v_k)\},\{Y^{+}(B,v_1),\ldots,Y^{+}(B,v_k)\},\{Y(B,v_1)\},\ldots,\{Y(B,v_k)\}\}.
\]
\begin{figure}[h] 
\begin{center}
\centering
\psfrag{v1}{\tiny $v_1$}
\psfrag{v2}{\tiny $v_2$}
\psfrag{v3}{\tiny $v_3$}
\psfrag{-}{\footnotesize $-$}
\psfrag{+}{\footnotesize $+$}
\psfrag{CB1}{\tiny $C(B,v_1)$}
\psfrag{CB2}{\tiny $C(B,v_2)$}
\psfrag{CB3}{\tiny $C(B,v_3)$}
\psfrag{B}{\footnotesize $B$}
\psfrag{S1}{\footnotesize $\Sigma'$}
\psfrag{S2}{\footnotesize $\Sigma' \cto (B\cup Y)$}
\psfrag{S3}{\footnotesize $\Sigma' \cto (B\cup Y)\dto Y$}
\psfrag{u}{\tiny $u$}
\includegraphics*[scale=0.30]{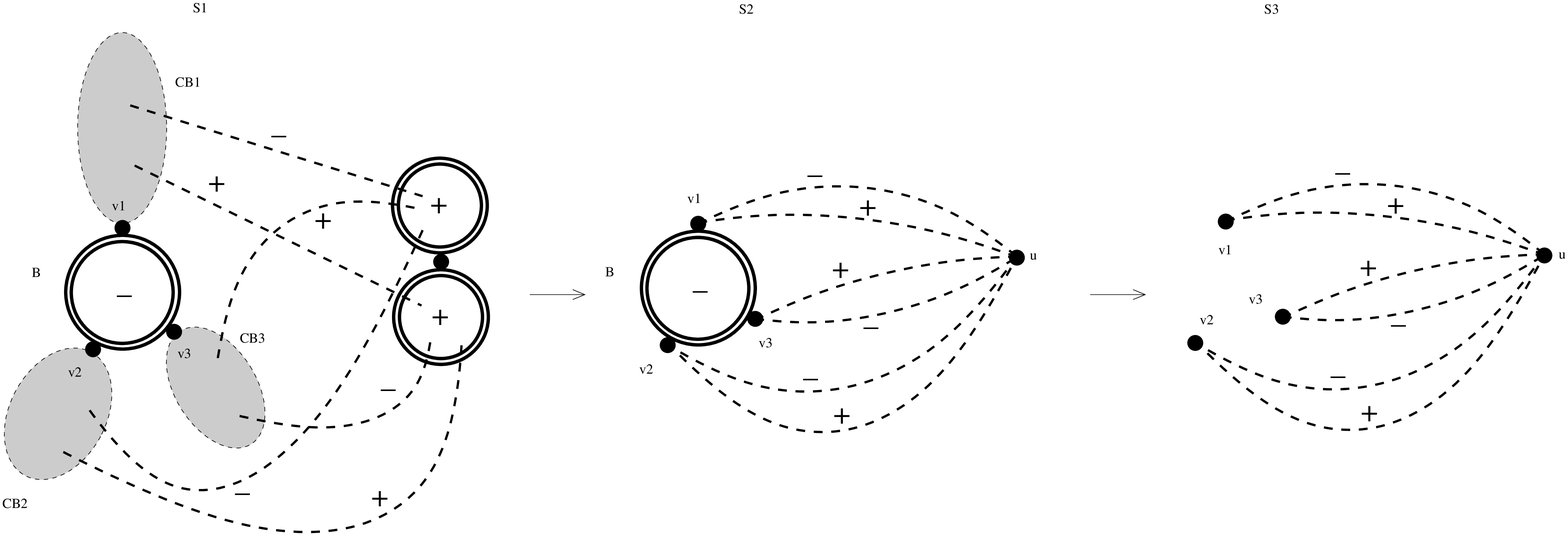}
\end{center}
\caption{}
\label{fig_u11}
\end{figure}
Therefore, by Theorems~\ref{thrm_bonds} and~\ref{th_minsig} and Proposition~\ref{prop_samematroid}, $\mathcal{L}$ is a set of cocircuits of 
$M((\Sigma'\cto(B\cup{Y}))\dto{Y})=(M(\Sigma')\cto(B\cup{Y}))\dto{Y}=(M(\Sigma)\cto(B\cup{Y}))\dto{Y}$. Since $M(\Sigma)$ is binary, 
then  by Corollary~\ref{cor_corl1}:
\[
\mathcal{C}^{*}(M(\Sigma)\cto(B\cup{Y})\dto{Y})=\pi(M(\Sigma),B,Y).
\]
But we know by (7.3) in~\cite{Tutte:1959}, that the members of $\pi(M(\Sigma),B,Y)$ should be disjoint which is a contradiction. 
Therefore, $|V_B| < {2}$.

Suppose now that $V_B=\{v\}$. If we assume that there exist $y_1\in{Y^{-}(B,v)}$ and $y_2\in{Y^{+}(B,v)}$ such that both 
$y_1$ and $y_2$ do not have $v$ as an end-vertex then the negative cycle $N$ formed by these two edges and the positive 
paths between their end-vertices in $C(B,v)$ and $\Sigma_2'$, respectively, is vertex disjoint with some negative cycle 
in the unbalanced separate $\Sigma'\dto{B}$. Since $\Sigma'$ is tangled, this can not happen and, therefore, all edges 
of $Y^{-}(B,v)$ or all edges of $Y^{+}(B,v)$ have $v$ as an end-vertex in $\Sigma'$. Furthermore, since $N$ has no vertex 
in $\Sigma\dto{B}$ other than $v$ we have that any negative cycle in $\Sigma\dto{B}$ must be incident to $v$; otherwise 
two vertex disjoint negative cycles are contained in $\Sigma'$.

Assume that $C$ is a negative cycle in $\Sigma'$ not adjacent to $v$.  Since $C$ is not contained in $\Sigma\dto{B}$ 
it contains edges from $Y$ and, furthermore, since $Y$ is an unbalancing bond the number of these edges has to be an even 
number. Say that these edges are those contained in $Y_{C}=\{y_1,y_2,\ldots,y_{2n}\}$. Arrange the edges in $Y_C$, each of 
which has one end-vertex in $\Sigma_1'$ and one in $\Sigma_2'$, as shown in Figure~\ref{fig_u14}, where the dashed lines 
are the $n$ paths between the end-vertices of the edges of $Y_C$ in $\Sigma_1'$ and $\Sigma_2'$. Since $C$ is negative 
there exist vertices $w_i$ and $w_{i+1}$ of $C$ in $\Sigma_2'$ such that the path $(w_i,y_i,u_i,\ldots,u_{i+1},y_{i+1},w_{i+1})$ 
is negative.
\begin{figure}[h] 
\begin{center}
\centering
\psfrag{u1}{\tiny $u_1$}
\psfrag{u2}{\tiny $u_2$}
\psfrag{ui}{\tiny $u_i$}
\psfrag{ui1}{\tiny $u_{i+1}$}
\psfrag{u2n1}{\tiny $u_{2n-1}$}
\psfrag{u2n}{\tiny $u_{2n}$}
\psfrag{y1}{\footnotesize $y_1$}
\psfrag{y2}{\footnotesize $y_2$}
\psfrag{yi}{\footnotesize $y_i$}
\psfrag{yi1}{\footnotesize $y_{i+1}$}
\psfrag{y2n1}{\footnotesize $y_{2n-1}$}
\psfrag{y2n}{\footnotesize $y_{2n}$}
\psfrag{w1}{\tiny $w_1$}
\psfrag{w2}{\tiny $w_2$}
\psfrag{wi}{\tiny $w_i$}
\psfrag{wi1}{\tiny $w_{i+1}$}
\psfrag{w2n1}{\tiny $w_{2n-1}$}
\psfrag{w2n}{\tiny $w_{2n}$}
\psfrag{S1}{\footnotesize $\Sigma_1$}
\psfrag{S2}{\footnotesize $\Sigma_2$}
\includegraphics*[scale=0.45]{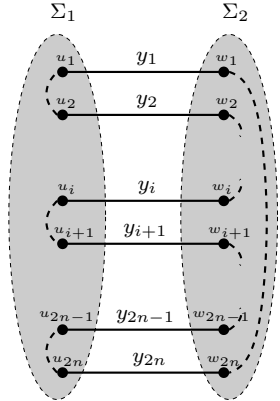}
\end{center}
\caption{An arrangement of the edges of $Y_{C}$.}
\label{fig_u14}
\end{figure}
Assume that $y_i,y_{i+1} \in {Y(B,v_l)}$ for some $v_l\in{\bar{V}_B}$. Then $v_l\neq{v}$, since $C$ is not incident to $v$. 
If $v_l\in{\bar{V}_B}-\{v\}$, then the path between $u_i$ and $u_{i+1}$ in $C(B,v_l)$ is positive or empty, which implies 
that $y_i$ and $y_{i+1}$ are of different sign. This in turn implies that $|V_B|>1$. Therefore, there exist vertices $v_1$ 
and $v_2$ in $\bar{V}_B$   such that $v_1\neq v_2 \neq v$ and $y_i\in Y(B,v_1)$ and $y_{i+1} \in Y(B,v_2)$ 
Contract the edges of $C(B,v_1)$ and $C(B,v_2)$ and switch at $v_1$ or $v_2$ if $y_i$ and $y_{i+1}$ have different signs, 
such that the path from $v_1$ to $v_2$ of $C$ contained in $\Sigma \dto{B}$ is positive and $y_i$ and $y_{i+1}$ have 
different sign. Moreover since this path, which we shall call $v_1-v_2$ path, is positive we can perform switching at 
its vertices other than $v_1$ and $v_2$ to make all of its edges positive. Contract now the positive edges of the 
$v_1-v_2$ path into a new vertex $w$ and call $\Sigma''$ the graph so-obtained.

The effect of contracting the $v_1-v_2$ path in $\Sigma'\dto{B}$ can be seen inductively if we consider the contraction 
of a single edge $e$ from the path. Consider the graph $(\Sigma'\dto{B})/e$, where $e=(w_1,w_2)$ is some positive edge 
from the  $v_1-v_2$ path and $C'$ any negative cycle of $\Sigma\dto{B}$, which we know that it is incident to $v$. If 
$e\in{C'}$, then $E(C')-{e}$ is the edge set of a negative cycle in $(\Sigma'\dto{B})/e$, which  is 
incident to $v$. If $e\notin{C'}$, then either $C'$ is still a negative cycle in $(\Sigma'\dto{B})/e$ or $C'$ is not a 
cycle any more in the graph because it is not minimal. This is because the contraction of $e$ creates two cycles $C_1'$ 
and $C_2'$ in $(\Sigma'\dto{B})/e$ which are contained in $C'$. Of the two only one is negative, and it has to be 
adjacent to $v$, since otherwise we have a negative cycle in $\Sigma'\dto{B}$ not adjacent to $v$. Since $w_1,w_2\neq{v}$, 
the contraction of $e$ into a vertex $w$ in the $2$-connected component $\Sigma'\dto{B}$ will create a $2$-connected 
unbalancing component containing $w$ and $v$ and all the negative cycles of $(\Sigma'\dto{B})/e$, and (possibly) 
$2$-connected balanced components adjacent to $w$ (see Figure~\ref{fig_u16}). The $2$-connected unbalanced component 
in  $(\Sigma'\dto{B})/e$ is not a B-necklace since $\Sigma'\dto{B}$ is not a B-necklace, i.e. the expansion of a vertex 
in a B-necklace results in a B-necklace.
\begin{figure}[h] 
\begin{center}
\centering
\psfrag{v}{\tiny $v$}
\psfrag{w1}{\tiny $w_1$}
\psfrag{w2}{\tiny $w_2$}
\psfrag{w}{\tiny $w$}
\psfrag{-}{\footnotesize $-$}
\psfrag{+}{\footnotesize $+$}
\psfrag{SB}{\footnotesize $\Sigma'\dto{B}$}
\psfrag{SBe}{\footnotesize $\Sigma'\dto{B}/e$}
\psfrag{e}{\footnotesize $e$}
\includegraphics*[scale=0.35]{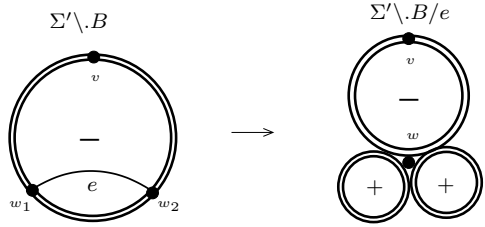}
\end{center}
\caption{Contraction of a positive edge in an unbalanced block.}
\label{fig_u16}
\end{figure}

Therefore, $Y$ is an unbalancing bond in a tangled signed graph $\Sigma''$ such that  the core of $\Sigma''\dof Y$ is not a B-necklace.
$\Sigma''\dof Y$ has an unbalanced  block $B'$ which contains vertices $v$ and $w$, where
\[
Y^{+}(B',v)\neq\emptyset \neq{Y^{-}(B',v)} \textrm{\; and \;}
\]
\[
 Y(B',w)=Y(B,v_1)\cup{Y(B,v_2)},
\]
while $y_i$ and $y_{i+1}$ are of different sign and $y_i,y_{i+1}\in{Y(B',w)}$.
But in this case $M(\Sigma'')$ is  not binary, as shown above, which contradicts the fact that $\Sigma''$ is tangled. Therefore, our 
original hypothesis that there exists negative cycle $C$ in $\Sigma'$ not adjacent to $v$ is false, which implies that $v$ is a 
balancing vertex in $\Sigma'$ and $M(\Sigma')=M(\Sigma)$ is graphic.
\end{proof}

The theorem that follows provides the graphical characterization of $\pi(M(\Sigma),B,Y)$ for a given cocircuit of a signed-graphic
matroid. 
\begin{theorem} \label{thrm_sep81}
Let $M(\Sigma)$ be a binary signed-graphic matroid and $Y$ be a star bond or an unbalancing bond of $\Sigma$ such that the core of
 $\Sigma\backslash{Y}$ is not a B-necklace. If $\Sigma\dto{B}$ is a separate of an end-graph $\Sigma_i$ of $\Sigma\backslash{Y}$ then
 $\pi(M(\Sigma),B,Y)$ is the class of all $Y(B,v)$ such that $v\in{V(\Sigma \dto{B})}$ and $Y(B,v)\neq{\emptyset}$. 
\end{theorem}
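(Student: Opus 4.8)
The plan is to prove the two inclusions $\pi(M(\Sigma),B,Y)\subseteq\{Y(B,v): v\in V(\Sigma\dto B),\ Y(B,v)\neq\emptyset\}$ and the reverse, using Corollary~\ref{cor_corl1} to reformulate $\pi(M(\Sigma),B,Y)$ as the cocircuit family $\mathcal{C}^{*}(M(\Sigma)\cto(B\cup Y)\dto Y)$ and then computing this cocircuit family graphically. First I would set up the graphical picture: by Theorem~\ref{thrm_tangled2}, $\Sigma\backslash Y$ has exactly two components, and the unbalanced block (which is not a B-necklace by hypothesis) is a bridge $B$ of $Y$ in $M(\Sigma)$, so $\Sigma\dto B$ is a well-defined unbalanced separate. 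I would then invoke Lemma~\ref{lem_un1}: either $M(\Sigma)$ is graphic, or after a series of switchings all edges outside $\Sigma\dto B$ are positive and, for each $v_i\in V(\Sigma\dto B)$ with $Y(B,v_i)\neq\emptyset$, all edges of $Y(B,v_i)$ carry the same sign. In the graphic case one can handle the statement by the analogous graphic result of Tutte (or reduce directly), so the substance is in the second case; by Proposition~\ref{prop_samematroid} these switchings do not change the matroid, so we may assume $\Sigma$ itself has this form.

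Next I would describe $\Sigma'\cto(B\cup Y)\dto Y$ explicitly. Since $\Sigma_1\backslash B$ and $\Sigma_2$ consist of positive edges only, contracting $E(\Sigma)-(B\cup Y)$ collapses each balanced component $C(B,v_i)$ of $\Sigma_1\backslash B$ down to the single vertex $v_i$ of $\Sigma\dto B$ it attaches to, and collapses all of $\Sigma_2$ to one vertex $u$; this is exactly the situation drawn in Figure~\ref{fig_u11}. Then deleting $B$ leaves a graph on vertex set $\{u\}\cup\{v_i: Y(B,v_i)\neq\emptyset\}$ in which the edges of each $Y(B,v_i)$ form a parallel class joining $v_i$ to $u$ (here I use that all edges of $Y(B,v_i)$ have the same sign, and after a further switching at $u$ if necessary we may take them all positive, so no negative loops appear). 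In such a graph — a ``book'' of parallel classes all sharing the vertex $u$ — the bonds are precisely the stars of the non-$u$ vertices, i.e. the sets $Y(B,v_i)$. Hence by Theorem~\ref{thrm_bonds} and Theorem~\ref{th_minsig}, $\mathcal{C}^{*}(M(\Sigma)\cto(B\cup Y)\dto Y)=\{Y(B,v): v\in V(\Sigma\dto B),\ Y(B,v)\neq\emptyset\}$, and Corollary~\ref{cor_corl1} identifies this with $\pi(M(\Sigma),B,Y)$.

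The main obstacle I expect is justifying the claim that after the switchings of Lemma~\ref{lem_un1} the contracted graph really is the clean ``book of parallel classes'' picture — in particular that distinct $v_i$ remain distinct after contraction (they do, since they lie in the $2$-connected unbalanced block $\Sigma\dto B$ and only $B$-edges, which are deleted, join them inside that block) and that no edge of $Y$ acquires both endpoints equal to $u$ (this is exactly the content of $\Sigma\dof Y$ having no double bond, Theorem~\ref{thrm_tangled1}(ii), since a $Y$-edge with both ends in $\Sigma_2$ would be such). A secondary care point is the graphic alternative of Lemma~\ref{lem_un1}: there one must observe that the conclusion of the theorem still holds because a balancing vertex $v$ makes $\Sigma$ essentially a graph with distinguished vertex, and the partition $\pi$ of a graphic cocircuit is Tutte's classical one, which for the corresponding cut is again the class of $Y(B,v)$'s; I would either spell this out or note it reduces to~\cite{Tutte:1959}. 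Modulo these points the argument is a direct translation between the matroid operations and the signed-graph operations via Theorems~\ref{th_minsig} and~\ref{thrm_bonds} and Corollary~\ref{cor_corl1}.
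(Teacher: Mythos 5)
Your argument, where it applies, matches the paper's treatment of one case (what the paper calls Category~3), but it does not prove the theorem as stated. The hypothesis is that $\Sigma\dto{B}$ is \emph{any} separate of an end-graph of $\Sigma\backslash Y$, whereas you fix $B$ at the outset to be the unique unbalanced block of $\Sigma\backslash Y$ and carry that assumption through the whole computation. The theorem is later applied (e.g.\ in Theorem~\ref{th_louloudi}) to pairs of bridges lying in the same component of $\Sigma\backslash Y$, at most one of which can be the unbalanced block, so the balanced bridges cannot be skipped. For a balanced separate $\Sigma\dto B$ your two key tools break down: Lemma~\ref{lem_un1} is stated only for the unbalanced separate, so the same-sign property of each $Y(B,v)$ must instead be extracted directly from tangledness (no two vertex-disjoint negative cycles); and the contracted graph $(\Sigma\cto(B\cup Y))\dto Y$ is \emph{not} a balanced ``book of parallel classes,'' because exactly one of the pieces being contracted contains the unbalanced block, and contracting an unbalanced subgraph produces negative half-edges. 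Concretely, if $\Sigma\dto B$ is a balanced block of the component $\Sigma_1$ containing the unbalanced block, the edges of $Y(B,x_j)$ for the vertex $x_j$ whose piece $C(B,x_j)$ is unbalanced become half-edges at $u$, and $Y(B,x_j)$ is then a \emph{balancing} bond rather than a star of some $v_i$; if $\Sigma\dto B$ lies in the other component, all of $Y$ turns into half-edges at the vertices of $\Sigma\dto B$. In these situations one must also argue separately that there are no further bonds beyond the $Y(B,v)$; the paper does this by noting that two bonds sharing an element would violate the disjointness of the members of $\pi(M(\Sigma),B,Y)$ guaranteed by Corollary~\ref{cor_corl1} and regularity, a step your ``the bonds of a book are precisely the stars'' argument silently absorbs but which does not transfer to the half-edge pictures.

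Two smaller points. First, your remark that a switching at $u$ makes all of $Y$ positive is not quite right (switching at $u$ negates every parallel class simultaneously); what you actually need is only that each class is uniform in sign, which already forces the graph to be balanced. Second, the deferral of the ``$M(\Sigma)$ graphic'' alternative of Lemma~\ref{lem_un1} to Tutte's graphic theory is left vague; in the paper this alternative is excluded because the theorem is invoked only for non-graphic cocircuits, and in the one place Lemma~\ref{lem_un1} is actually used (the unbalanced bridge) the non-graphicity of $M(\Sigma)$ is cited explicitly. You should either add that hypothesis or make the reduction precise.
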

\begin{proof}

Let $\mathcal{L}=\{Y(B,v)\; : \; v\in{V(\Sigma\dto{B})} \textrm{ and } Y(B,v)\neq{\emptyset}\}$ . By Corollary~\ref{cor_corl1}, 
we know that:
\[
 \pi(M(\Sigma),B,Y) = \mathcal{C}^{*}((M(\Sigma)\cto (B\cup{Y}))\dto{Y})
\]
and thus, by Theorem~\ref{th_minsig}, we have that:
\[
 \pi(M(\Sigma),B,Y) = \mathcal{C}^{*}(M((\Sigma \cto (B\cup{Y}))\dto{Y})).
\]
Let $\mathcal{M}$ be the family of bonds of $\Sigma_b= (\Sigma \cto (B\cup{Y}))\dto{Y}$. Since there is one to one correspondence 
between the members of $\mathcal{C}^{*}(M((\Sigma \cto (B\cup{Y}))\dto{Y})$ and the bonds of $\Sigma_b$, we shall equivalently show
 that, for any bridge $B$ of $Y$ in $M(\Sigma)$, $\mathcal{L}=\mathcal{M}$. Moreover, we shall show this only for the case in which 
 $Y$ is an unbalancing bond since the proof for the case in which $Y$ is a star bond follows easily.

By Theorem~\ref{thrm_tangled2}, the signed graph $\Sigma\backslash{Y}$ will consist of two components 
$\Sigma_1$ and $\Sigma_2$ and contain exactly one unbalanced block. Without loss of generality, we assume that this unbalanced 
block is contained in $\Sigma_1$. By Proposition~\ref{prop_sgn_graphic}, since $C(B,v)$ is balanced for any $v\in{V(\Sigma \dto{B})}$ and 
$\Sigma_2$ is balanced, there exists a series of switchings on the vertices of $\Sigma_1\backslash{B_1}$  and $\Sigma_2$ such that 
all the edges in  $\Sigma_1\backslash{B_1}$  and $\Sigma_2$ become positive. We call $\Sigma'$, $\Sigma_1'$, $\Sigma_2'$ and 
$\Sigma_b'$ the graphs so-obtained from $\Sigma$, $\Sigma_1$, $\Sigma_2$ and $\Sigma_b$, respectively,  by applying these 
switchings. Figure~\ref{fig:exam_unbal} depicts an example signed graph $\Sigma'$, where the dashed edges are the edges of 
the unbalancing bond $Y$.
\begin{figure}[h] 
\begin{center}
\centering
\psfrag{S1}{\footnotesize $\Sigma'_1$}
\psfrag{S2}{\footnotesize $\Sigma'_2$}
\psfrag{S}{\footnotesize $\Sigma':$}
\psfrag{1}{\footnotesize $1$}
\psfrag{2}{\footnotesize $2$}
\psfrag{3}{\footnotesize $3$}
\psfrag{4}{\footnotesize $4$}
\psfrag{5}{\footnotesize $5$}
\psfrag{6}{\footnotesize $6$}
\psfrag{7}{\footnotesize $7$}
\psfrag{B1}{\footnotesize $B_1$}
\psfrag{B2}{\footnotesize $B_2$}
\psfrag{B3}{\footnotesize $B_3$}
\psfrag{B4}{\footnotesize $B_4$}
\psfrag{B5}{\footnotesize $B_5$}
\psfrag{B6}{\footnotesize $B_6$}
\psfrag{B0}{\footnotesize $B_0$}
\psfrag{v1}{\tiny $v_1$}
\psfrag{v2}{\tiny $v_2$}
\psfrag{v3}{\tiny $v_3$}
\psfrag{v4}{\tiny $v_4$}
\psfrag{v5}{\tiny $v_5$}
\psfrag{v6}{\tiny $v_6$}
\psfrag{v7}{\tiny $v_7$}
\psfrag{v8}{\tiny $v_8$}
\psfrag{v9}{\tiny $v_9$}
\psfrag{v10}{\tiny $v_{10}$}
\includegraphics*[scale=0.40]{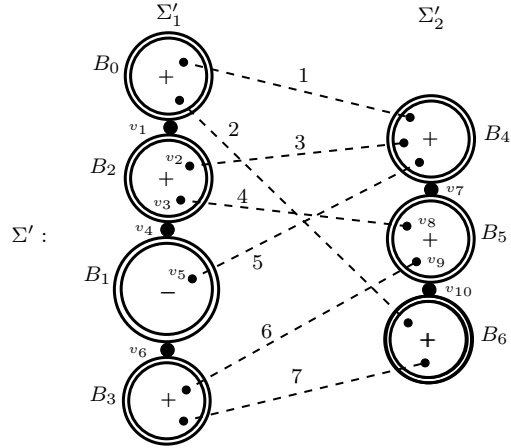}
\end{center}
\caption{An example tangled $\Sigma'$  where the dashed lines represent the edges of an unbalancing bond.}
\label{fig:exam_unbal}
\end{figure}

We classify the bridges of $Y$ in $M(\Sigma')=M(\Sigma)$ in three categories based on the form of the corresponding separates 
in $\Sigma'\backslash{Y}$. Specifically, a bridge $B$ of $Y$ in $M(\Sigma)$ falls in:
\begin{itemize}
 \item {\bf Category 1}, if the separate $\Sigma'\dto{B}$ of $\Sigma'\backslash{Y}$ is a balanced block in $\Sigma_1'$,
 \item {\bf Category 2}, if the separate $\Sigma'\dto{B}$ of $\Sigma'\backslash{Y}$ is a balanced block in $\Sigma_2'$, 
 \item {\bf Category 3}, if the separate $\Sigma'\dto{B}$ of $\Sigma'\backslash{Y}$ is the unbalanced block in $\Sigma_1'$
\end{itemize}
In what follows  we shall show that $\mathcal{L}=\mathcal{M}$ for any bridge $B$ of each category. 
We have the following three cases.\\
\noindent
{\bf Case 1:} $B$ is a bridge of $Y$ in $M(\Sigma)$ which belongs to Category 1. Initially, we shall describe the effect of the 
series of contractions and deletions in $\Sigma'$ resulting in $\Sigma_b'$. Let $X$ be the set of common vertices of $\Sigma'\dto{B}$ 
and $\Sigma_1'\backslash{B}$. Clearly, there exists an $x_j\in{X}$  such that $C(B,x_j)$ contains the unbalanced block of $\Sigma_1'$. 
The signed graph $\bar{\Sigma}'=\Sigma'\backslash{E(\Sigma_2)}$ is the graph obtained from $\Sigma'$ by contracting $\Sigma_2'$ 
into a single vertex $u$ and replacing the end-vertex of each edge $Y$ in $\Sigma_2'$ by $u$. Furthermore, the signed graph 
$\Sigma''=\bar{\Sigma}'/C(B,x_j)$, which contains $\Sigma'_b$ as a minor,  is obtained from $\bar{\Sigma}'$ by deleting $C(B,x_j)$ 
and replacing every edge in $Y(B,x_j)$ by a half-edge incident to $u$. In $\Sigma''$ we contract all $C(B,x_i)$ with $i\neq{j}$ and 
call $\Sigma_c'$ the signed graph so-obtained. Then $\Sigma_c'=\Sigma'\cto{(B\cup{Y})}$ and every edge of $Y(B,x_i)$ with $i\neq{j}$ 
has one end-vertex being $u$ and the other being $x_i$ in $\Sigma'_c$. Thus,  for any $v\in{V(\Sigma'\dto{B})-\{x_j\}}$ such that 
$Y(B,v)\neq{\emptyset}$, $Y(B,v)$ is a set of parallel edges incident to $u$ and $v$ in $\Sigma_b'=\Sigma_c'\backslash{B}$, while 
all the edges in $Y(B,x_j)$ are half-edges incident with $u$ in  $\Sigma_b'$ (see (a) in Figure~\ref{fig_exam_unbal_cases}). Furthermore, for 
any $v\in{V(\Sigma'\dto{B})-\{x_j\}}$ such that $Y(B,v)\neq{\emptyset}$, the edges of  $Y(B,v)$ must be of the same sign, since 
otherwise $\Sigma'$ would have two vertex disjoint negative cycles contradicting the fact that $\Sigma'$ is tangled. Thus, any 
$Y(B,v)\neq{\emptyset}$ is a bond of $\Sigma'_b$. This result and the fact that the signed graphs $\Sigma_b$ and $\Sigma'_b$ have equal classes of bonds imply that $\mathcal{L}$ is contained in $\mathcal{M}$. Finally, if $\Sigma_b'$ had a bond which was not equal to some $Y(B,v)$ then it would 
have two bonds with a common element and thus, by Corollary~\ref{cor_corl1}, $M(\Sigma')$ would not be regular. By Theorem~\ref{th_sliii},  this contradicts the fact that $\Sigma'$ is 
tangled and thus, $\mathcal{L}=\mathcal{M}$. \\
\noindent
{\bf Case 2:} $B$ is a bridge of $Y$ in $M(\Sigma)$ which belongs to Category 2. Since $\Sigma_2'$ consists of positive edges and 
$\Sigma_1'$ contains a negative cycle, for any $v\in{V(\Sigma'\dto{B})}$ such that $Y(B,v)\neq{\emptyset}$, $Y(B,v)$ will be a set 
of  half-edges incident with $v$ in $\Sigma'\cto({B\cup{Y}})$. Thus, the edges of each $Y(B,v)$ will form a bond of $\Sigma_b'$ 
(see (b) in Figure~\ref{fig_exam_unbal_cases}) which implies  that $\mathcal{L}$ is contained in $\mathcal{M}$. Furthermore, $\Sigma_b'$ has no other bonds, since otherwise it should have two bonds having at least one common edge. This would imply that $M(\Sigma'_b)$ would have two cocircuits which have a common element and thus, by Corollary~\ref{cor_corl1}, $M(\Sigma')$ would not be regular. By Theorem~\ref{th_sliii},  this is in contradiction with the fact that $\Sigma'$ is tangled and thus, $\mathcal{L}=\mathcal{M}$. \\
\begin{figure}[hbtp]
\begin{center}
\centering
\psfrag{1}{\tiny $1$}
\psfrag{2}{\tiny $2$}
\psfrag{3}{\tiny $3$}
\psfrag{4}{\tiny $4$}
\psfrag{5}{\tiny $5$}
\psfrag{6}{\tiny $6$}
\psfrag{7}{\tiny $7$}
\psfrag{v1}{\tiny $v_1$}
\psfrag{v2}{\tiny $v_2$}
\psfrag{v3}{\tiny $v_3$}
\psfrag{v4}{\tiny $v_4$}
\psfrag{v5}{\tiny $v_5$}
\psfrag{v6}{\tiny $v_6$}
\psfrag{v7}{\tiny $v_7$}
\psfrag{v8}{\tiny $v_8$}
\psfrag{v9}{\tiny $v_9$}
\psfrag{v10}{\tiny $v_{10}$}
\psfrag{u}{\tiny $u$}
\psfrag{S1}{\tiny $\Sigma' \cto (B_2\cup{Y})\dto{Y}$}
\psfrag{S2}{\tiny $\Sigma' \cto (B_5\cup{Y})\dto{Y}$}
\psfrag{S3}{\tiny $\Sigma' \cto (B_1\cup{Y})\dto{Y}$}
\mbox{
\subfigure[Case~1]
{
\includegraphics*[scale=0.29]{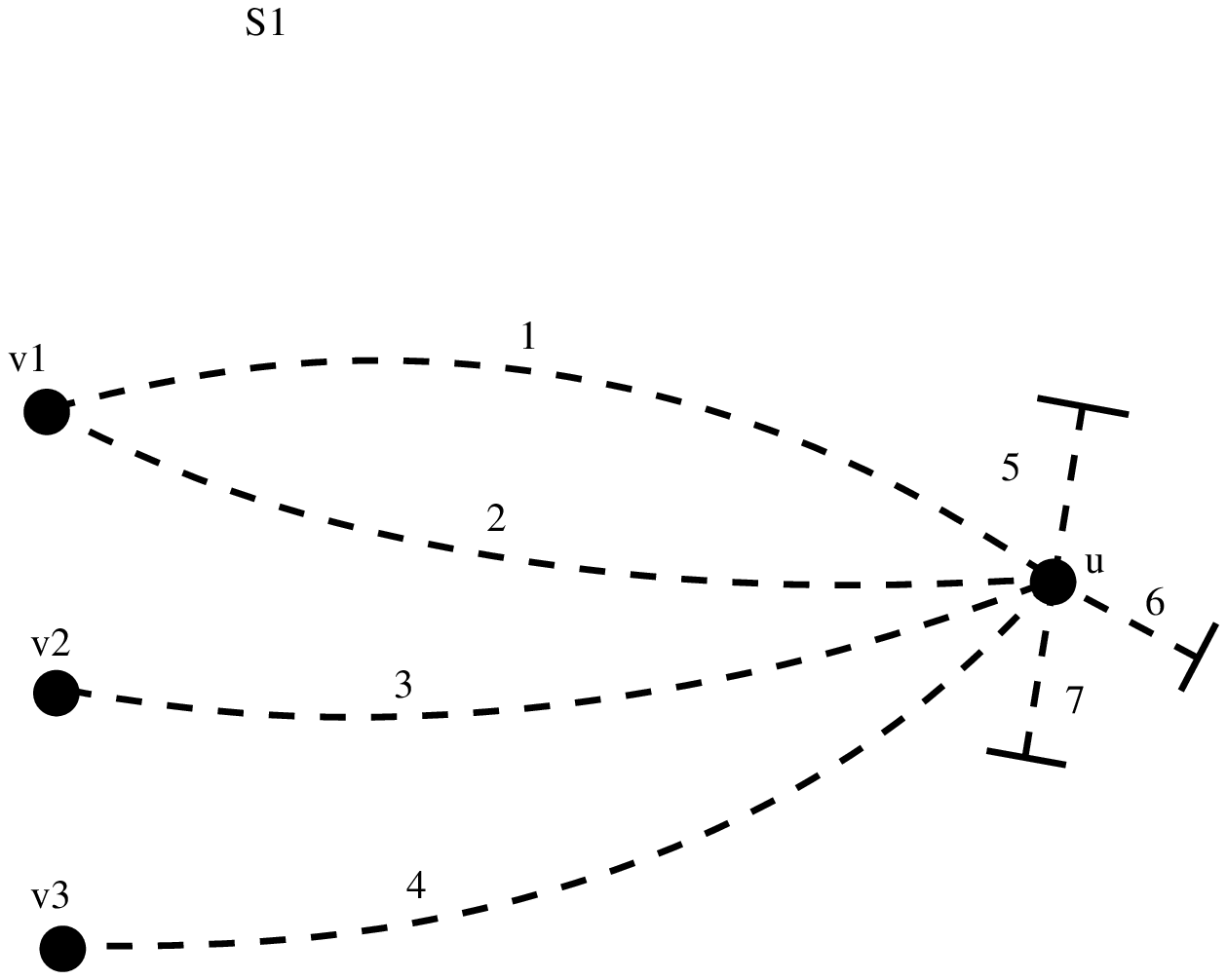}
}\quad
\subfigure[Case~2]
{
\includegraphics*[scale=0.29]{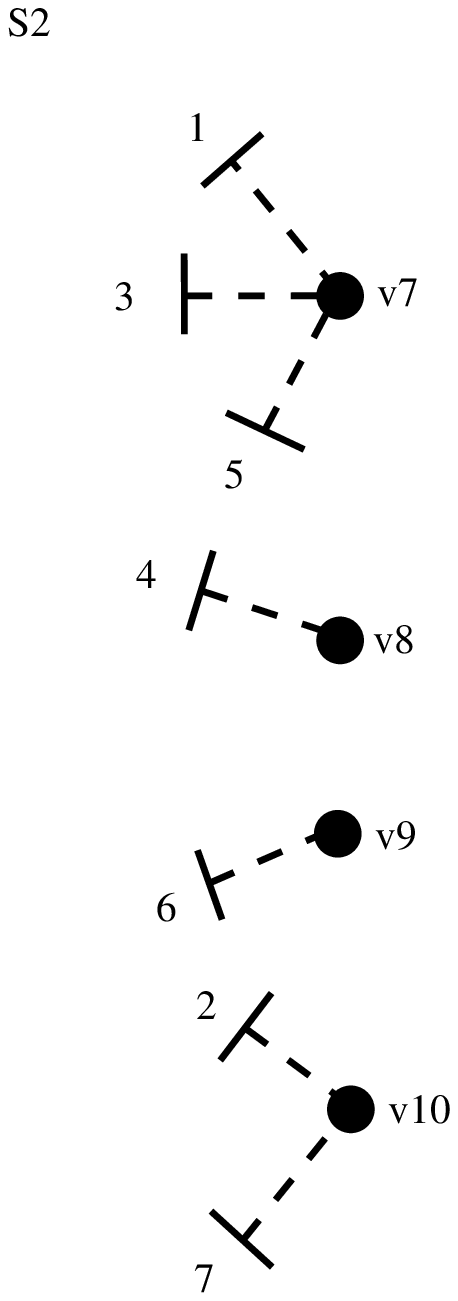}
}\quad
\subfigure[Case~3]
{
\includegraphics*[scale=0.29]{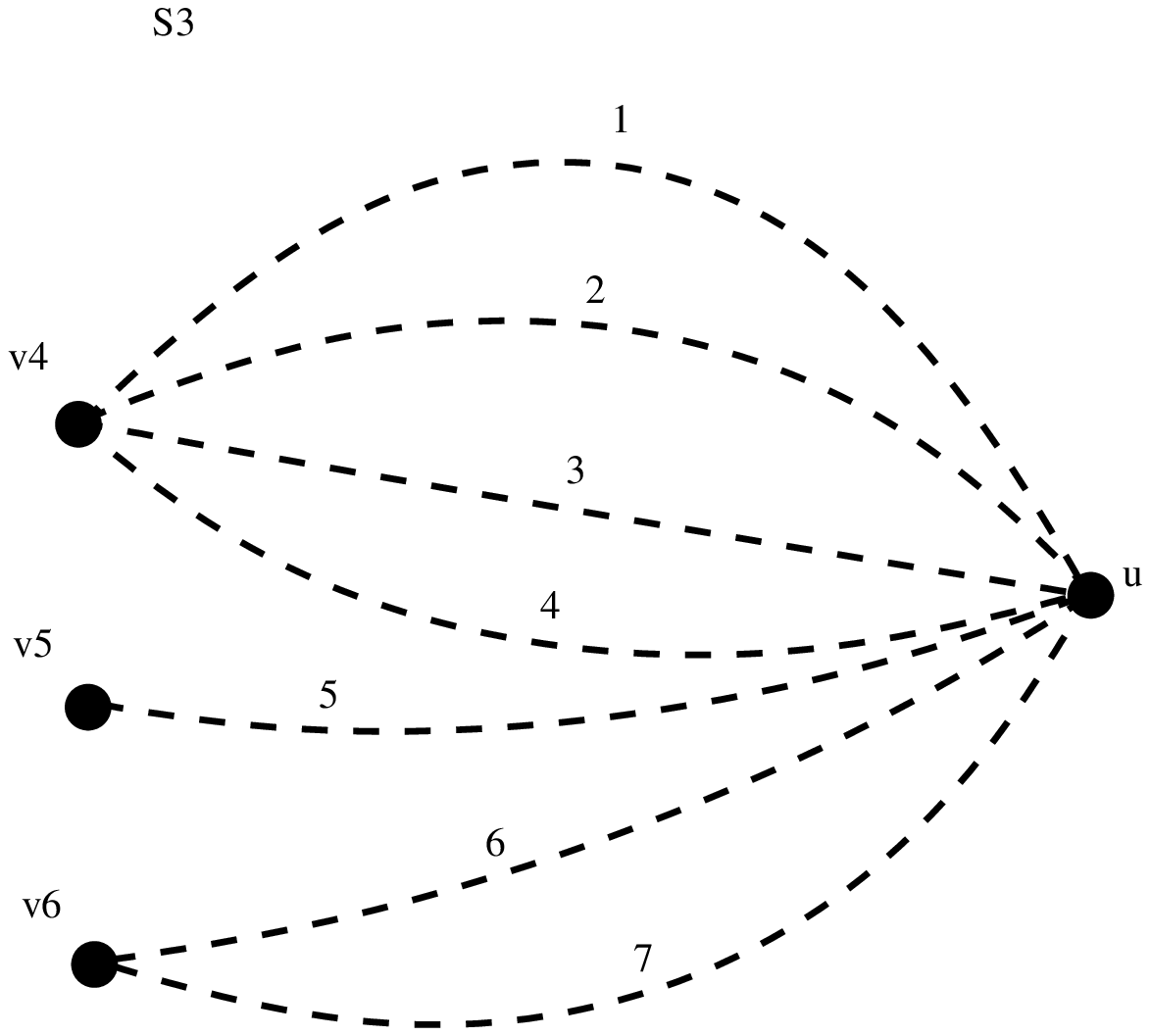}
}} \caption{Three different cases}
\label{fig_exam_unbal_cases}
\end{center}
\end{figure}
\noindent
{\bf Case 3:} $B$ is a bridge of $Y$ in $M(\Sigma)$ which belongs to Category 3. Since both $\Sigma_2'$ and $\Sigma_1'\backslash{B}$ 
consist of positive edges, the graph $\Sigma'\cto (B\cup{Y})$ is obtained from $\Sigma'$ by contracting $\Sigma_2'$ to a 
vertex $u$ and by contracting each $C(B,v)$ (where $v\in{V(\Sigma'\dto{B})}$) to $v$. Therefore, the edges of each $Y(B,v)$ become incident with 
$u$ and $v$ which implies that the edges of each $Y(B,v)$ are parallel edges in $\Sigma'_b$ (see (c) in Figure~\ref{fig_exam_unbal_cases}). 
Furthermore, by Lemma~\ref{lem_un1} and since $M(\Sigma)$ is not graphic, each $Y(B,v)$ in $\Sigma_b$ consists of edges of the 
same sign. Thus, each $Y(B,v)$ is a bond of $\Sigma'_b$ which implies that $\mathcal{L}$ is contained in $\mathcal{M}$. Finally, $\Sigma_b'$ has no other bonds, since otherwise it should  have two bonds having at least one common edge. This implies that $M(\Sigma')$ would have two cocircuits having a common element and thus, by Corollary~\ref{cor_corl1}, $M(\Sigma')$ would not be regular. By Theorem~\ref{th_sliii},  this contradicts the fact that $\Sigma'$ is tangled and thus, $\mathcal{L}=\mathcal{M}$.
\end{proof}

It turns out that star bonds or unbalancing bonds whose deletion does not result in the formation of a B-necklace, are always
bridge-separable in the corresponding signed-graphic matroid.
\begin{theorem} \label{th_louloudi}
Let $Y$ be a cocircuit of a binary signed-graphic and non graphic matroid $M(\Sigma)$. If $Y$ is a star bond 
or an unbalancing bond of $\Sigma$ such that the core of $\Sigma\dof{Y}$ is not a B-necklace then $Y$ is a bridge-separable 
cocircuit of $M(\Sigma)$.
\end{theorem}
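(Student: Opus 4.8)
The plan is to partition the bridges of $Y$ in $M(\Sigma)$ into two classes so that no two bridges in the same class overlap; this is exactly what bridge-separability demands. First I would apply Theorem~\ref{thrm_tangled2}: since $Y$ is a star bond or an unbalancing bond of the tangled graph $\Sigma$, the graph $\Sigma\dof{Y}$ has exactly two components $\Sigma_1,\Sigma_2$ and contains exactly one unbalanced block, which we take to lie in $\Sigma_1$. As the core of $\Sigma\dof{Y}$ is not a B-necklace, every bridge $B$ of $Y$ corresponds to a genuine block $\Sigma\dto{B}$ of $\Sigma\dof{Y}$, and each such block lies wholly in $\Sigma_1$ or wholly in $\Sigma_2$. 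I would then let $U$ collect the bridges whose separate lies in $\Sigma_1$ and $V$ collect those whose separate lies in $\Sigma_2$, and reduce the theorem to the claim that any two bridges lying in the same component avoid each other.

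The main tool is Theorem~\ref{thrm_sep81}, by which, for every bridge $B$, the partition $\pi(M(\Sigma),B,Y)$ is exactly the family of nonempty sets $Y(B,v)$ with $v\in V(\Sigma\dto{B})$; combined with Tutte's result that the members of $\pi(M(\Sigma),B,Y)$ are disjoint and cover $Y$, this says that the nonempty $Y(B,v)$ partition $Y$. Now take distinct bridges $B_1,B_2$ in the same class, so that $\Sigma\dto{B_1}$ and $\Sigma\dto{B_2}$ are blocks of a common component $\Sigma_j$. Using the block-cut-vertex tree of $\Sigma_j$, I would let $v_1\in V(\Sigma\dto{B_1})$ and $v_2\in V(\Sigma\dto{B_2})$ be the cut vertices at which the path from $B_1$ to $B_2$ leaves $B_1$ and $B_2$ respectively. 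The heart of the argument is the covering identity $C(B_1,v_1)\cup C(B_2,v_2)=V(\Sigma_j)$: every vertex separated from $B_1$ by $v_1$ lies in $C(B_1,v_1)$, every vertex separated from $B_2$ by $v_2$ lies in $C(B_2,v_2)$, and between them these two sets already exhaust $\Sigma_j$. Since the components $C(B_1,v)$, $v\in V(\Sigma\dto{B_1})$, are pairwise disjoint, this forces $C(B_1,v)\subseteq C(B_2,v_2)$ for every $v\neq v_1$, and therefore $Y(B_1,v_1)\cup Y(B_2,v_2)=Y$. If both sets are nonempty they are members of $\pi(M(\Sigma),B_1,Y)$ and $\pi(M(\Sigma),B_2,Y)$ witnessing that $B_1$ and $B_2$ avoid; if one of them is empty the other equals $Y$ and can be paired with an arbitrary member of the other partition, again witnessing avoidance. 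For a star bond, $\Sigma_2$ is a single vertex, $V=\emptyset$, and only the pairs in $U$ require this treatment.

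The step I expect to cause the most trouble is the verification of the covering identity $C(B_1,v_1)\cup C(B_2,v_2)=V(\Sigma_j)$: this calls for a careful walk through the block-cut-vertex structure of $\Sigma_j$, using the facts that $B_1$ and $B_2$ are blocks (so distinct vertices of a block lie in distinct components after that block is removed) and that $\Sigma_j$ is connected. One should also check the easy points that $\Sigma\dto{B_1}$, $\Sigma\dto{B_2}$ and the ``in between'' blocks on the $B_1$--$B_2$ path in $\Sigma_j$ are correctly accounted for, and handle the degenerate situation in which $B_1$ and $B_2$ share a cut vertex. Everything else --- the partition of the bridges via Theorem~\ref{thrm_tangled2}, the appeal to Theorem~\ref{thrm_sep81}, and the disposal of the star-bond and empty-set cases --- is routine.
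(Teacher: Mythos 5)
Your proof is correct and follows essentially the same route as the paper: partition the bridges into two classes according to which component of $\Sigma\backslash Y$ their separates lie in, then use Theorem~\ref{thrm_sep81} together with the covering identity $Y(B_1,v_1)\cup Y(B_2,v_2)=Y$ (obtained from $C(B_1,v_1)\cup C(B_2,v_2)\supseteq V(\Sigma_j)$) to show that two bridges in the same class avoid each other. Your extra care with the block--cut-vertex structure and with the degenerate case where one of the sets $Y(B_i,v_i)$ is empty only makes explicit details the paper leaves implicit.
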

\begin{proof}
Let $Y$ be a star bond or an unbalancing bond of $\Sigma$ such that the core of $\Sigma\dof{Y}$ is not a B-necklace. By 
Theorem~\ref{thrm_tangled2}, $\Sigma\dof{Y}$ consists of two components which we call $\Sigma_1$ and $\Sigma_2$. We arrange the 
bridges of $Y$ in $M(\Sigma)$ in two classes $T$ and $U$ such that a bridge $B_i$ is in $T (U)$ if $\Sigma\dto{B_i}$ is a 
separate of $\Sigma_1 (\Sigma_2)$. Suppose now that two bridges $B_1$ and $B_2$ of $T$ overlap. Then $\Sigma\dto{B_1}$ and 
$\Sigma\dto{B_2}$ are separates of $\Sigma_1$. Thus there exist vertices $v_1$ of $\Sigma\dto{B_1}$ and $v_2$ of 
$\Sigma\dto{B_2}$ such that $\Sigma\dto{B_2}$ is a subgraph of $C(B_1,v_1)$ and $\Sigma\dto{B_1}$ is a subgraph of $C(B_2,v_2)$. 
Furthermore, every vertex of $V(\Sigma_1)$ is a vertex of $C(B_1,v_1)$ or $C(B_2,v_2)$, therefore we have that 
$Y(B_1,v_1)\cup Y(B_2,v_2)=Y$. Thus, by Theorem~\ref{thrm_sep81}  we can find some $K\in\pi(M(\Sigma),B_1,Y)$ and 
$J\in\pi(M(\Sigma),B_2,Y)$ such that $K\cup{J}=Y$ . This is in contradiction with our assumption that $B_1$ and $B_2$ overlap 
and the result follows.
\end{proof}

As the next result demonstrates, balanced bridges of non-graphic cocircuits result in $Y$-components which are
graphic matroids.
\begin{lemma} \label{lem_balanced=graphic}
Let $M(\Sigma)$ be a binary signed-graphic matroid, $Y$ a non-graphic cocircuit and $B$ a bridge of $Y$ in $M(\Sigma)$.
If $\Sigma \dto B$ is balanced then $M(\Sigma) \cto (Y\cup B)$ is graphic.
\end{lemma}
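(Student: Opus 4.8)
The plan is to analyze the signed graph $\Sigma' := \Sigma \cto (Y \cup B)$ and show it satisfies one of the sufficient conditions for graphicness in Proposition~\ref{prop_sgn_graphic}. Since $M(\Sigma)$ is binary and non-graphic, by Theorems~\ref{th_nero1} and~\ref{th_sliii} the graph $\Sigma$ is $2$-connected and tangled, and by the classification in Section~\ref{subsec_corc_bonds} the cocircuit $Y$ is either a star bond or an unbalancing bond whose core in $\Sigma \backslash Y$ is not a B-necklace. By Theorem~\ref{thrm_tangled2}, $\Sigma \backslash Y$ has exactly two components $\Sigma_1, \Sigma_2$ and exactly one unbalanced block, which (without loss of generality) lies in $\Sigma_1$; its edge set is some bridge $B^{*} \neq B$ of $Y$ in $M(\Sigma)$. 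The hypothesis that $\Sigma \dto B$ is balanced means $B$ is one of the \emph{balanced} separates, so $\Sigma \dto B$ lies in $\Sigma_1$ or in $\Sigma_2$ but does not contain the unbalanced block.

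First I would record the structure of $\Sigma' = \Sigma \cto (Y \cup B)$. By Theorem~\ref{th_minsig}, $M(\Sigma') = M(\Sigma) \cto (Y \cup B)$, so it suffices to prove $M(\Sigma')$ is graphic. Using Proposition~\ref{prop_samematroid} I would first switch so that all edges outside the unbalanced block $\Sigma \dto B^{*}$ are positive (this is possible exactly as in the proof of Lemma~\ref{lem_un1}, since all components $C(B,v)$ and all of $\Sigma_2$, as well as $\Sigma_1 \backslash B^{*}$, are balanced). Contracting $E(\Sigma) - (Y \cup B)$ then collapses each balanced piece: every component $C(B,v)$ for $v \in V(\Sigma \dto B)$ contracts to the single vertex $v$, and the whole of the ``other side'' (the component of $\Sigma \backslash Y$ not containing $\Sigma \dto B$, together with the rest of the component that does contain it outside $B$) contracts down, in particular carrying the unbalanced block $\Sigma \dto B^{*}$ — which is being \emph{contracted}, not deleted — down to a single vertex $u$. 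The key point is that contracting away a connected subgraph that contains all the negative cycles leaves a graph in which every remaining negative cycle must pass through $u$: indeed any negative cycle of $\Sigma'$ either lies entirely in the balanced block $\Sigma \dto B$ (impossible, it is balanced) or uses edges of $Y$, and all such edges are, after the contraction, incident with $u$.

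Hence I would argue that $u$ is a balancing vertex of $\Sigma'$: deleting $u$ destroys every negative cycle, so $\Sigma' \backslash u$ is balanced, and Proposition~\ref{prop_sgn_graphic}(iv) gives that $M(\Sigma') = M(\Sigma) \cto (Y \cup B)$ is graphic. The cleanest way to see that $\Sigma \dto B^{*}$ really does contract to a single vertex is to note that contracting all of $E(\Sigma) - (Y\cup B)$ in particular contracts $E(\Sigma \dto B^{*}) \subseteq E(\Sigma) - (Y \cup B)$ (as $B \neq B^{*}$ and bridges are disjoint), and contracting a connected subgraph identifies all its vertices; likewise the remaining positive edges outside $B$ on the $\Sigma_1$-side, and all of $\Sigma_2$, get identified into that same vertex $u$ because the graph $\Sigma$ minus $Y$ minus $B$ is, on the relevant side, connected after the earlier switching normalization. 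I expect the main obstacle to be bookkeeping: carefully justifying that after contraction \emph{all} the ``non-$B$'' vertices that are endpoints of $Y$-edges or of $B$-edges collapse to one vertex $u$, so that no negative cycle can avoid $u$ — this requires invoking the $2$-connectedness of $\Sigma$ and the precise shape of $\Sigma \backslash Y$ from Theorem~\ref{thrm_tangled2}, much as in Case~1 and Case~3 of the proof of Theorem~\ref{thrm_sep81}. Once that is pinned down, the appeal to Proposition~\ref{prop_sgn_graphic}(iv) is immediate.
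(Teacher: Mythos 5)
Your overall strategy---contract everything outside $B\cup Y$, observe that the balanced pieces collapse, and control the surviving negative cycles so that Proposition~\ref{prop_sgn_graphic} applies---is the same as the paper's. But there is a genuine error in the mechanics of the contraction, and it conceals the one place where tangledness must actually do work. Under the paper's definition of contraction in a signed graph, contracting a connected \emph{unbalanced} subgraph does not leave behind a vertex $u$: after a spanning tree is contracted one is left with at least one negative loop, and contracting a negative loop deletes its vertex and turns every other link at that vertex into a half edge. So in $\Sigma\cto(B\cup Y)$ the unbalanced block does not survive as a vertex through which all negative cycles pass; rather it vanishes and the edges of $Y$ attached to it become half edges. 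The balancing-vertex clause (iv) of Proposition~\ref{prop_sgn_graphic} is therefore not available as you invoke it; the correct route is clause (iii) (the only negative cycles remaining are half edges), which is what the paper's proof uses.

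The more substantive gap is the case where $\Sigma\dto B$ lies in the same component of $\Sigma\dof Y$ as the unbalanced block. There the contracted pieces do \emph{not} all identify to a single vertex: the other component of $\Sigma\dof Y$ contracts to one vertex, each balanced $C(B,v)$ contracts to its own $v$, and the unbalanced $C(B,x_j)$ disappears, converting the edges of $B$ incident to $x_j$ (and the $Y$-edges with an end in $C(B,x_j)$) into half edges. One must then exclude, for instance, a negative cycle formed by two $Y$-edges of different signs together with a positive path in $\Sigma\dto B$ avoiding $x_j$; such a cycle would lift to a negative cycle of $\Sigma$ vertex-disjoint from the negative cycles of the unbalanced block, contradicting tangledness. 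This is precisely the paper's second case, and it is the step you defer to ``bookkeeping''; without it the lemma is not proved.
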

\begin{proof}
Since $Y$ is a non-graphic cocircuit, $M(\Sigma)$ is not graphic and by  Theorem~\ref{th_sliii} $\Sigma$ is 
a tangled signed graph. Moreover, $Y$ will be either a star or unbalancing bond in $\Sigma$ such that the
core of $\Sigma \dof Y$ is not a B-necklace. It suffices to examine the case where $Y$ is an unbalancing bond.

Let $B^{+}$ be any bridge of $Y$ such that $\Sigma \dto B^{+}$ is balanced, while $B^{-}$ be the bridge corresponding to the
unique unbalanced block of $\Sigma \dof Y$.
Perform switchings in the vertices of $\Sigma$ such that all the edges in the balanced blocks of $\Sigma \dof Y$ become positive. 
If $B^{+}$ is in the balanced component of $\Sigma\dof Y$, contract any other balanced block to obtain 
$\Sigma \cto (B^{+} \cup B^{-} \cup Y )$ (see Figure~\ref{fig:balanced_graphic_1}). Contracting the edges of the 
unbalanced block $B^{-}$, where if an edge is negative switch one of its end-vertices, will result in one or more negative 
loops since this block contains negative cycles. Therefore, if we contract these negative loops according to the 
definition of contraction in signed graphs given in section~\ref{subsec_signed_graphs}, we get the signed graph
$\Sigma \cto (B^{+} \cup Y )$ where the only negative cycles are the half edges of $Y$. Therefore by 
Proposition~\ref{prop_sgn_graphic} $M(\Sigma \cto (B^{+} \cup Y ) ) = M(\Sigma) \cto (B^{+} \cup Y )$  is graphic.
\begin{figure}[h] 
\begin{center}
\centering
\psfrag{S1}{\footnotesize $\Sigma \cto (B^{+} \cup B^{-} \cup Y )$}
\psfrag{S3}{\footnotesize $\Sigma \cto (B^{+} \cup Y )$}
\psfrag{B+}{\footnotesize $B^{+}$}
\psfrag{B-}{\footnotesize $B^{-}$}
\includegraphics*[scale=0.30]{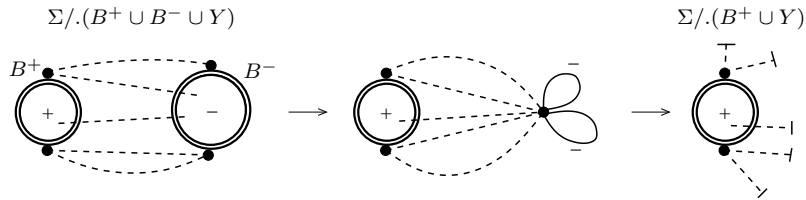}
\end{center}
\caption{$B^{+}$ in the balanced component of $\Sigma \dof Y$.}
\label{fig:balanced_graphic_1}
\end{figure}
If $B^{+}$ is in the unbalanced component of $\Sigma\dof Y$ the argument is similar. Contract again any other 
balanced block to obtain $\Sigma \cto (B^{+} \cup B^{-} \cup Y )$ (see Figure~\ref{fig:balanced_graphic_2}). 
Contraction now of the edges in the unbalanced block $B^{-}$ may result in changing the sign of the edges in 
$B^{+}$ which are adjacent to
the unique vertex of attachment $v$, while these edges will be become half edges upon deletion of the negative
loops. Therefore $\Sigma \cto (B^{+} \cup Y )$ will contain a balanced component $\bar{B}$, which is not 
necessarily 2-connected, and a number of half edges from $Y$ and $B^{+}$. If $\Sigma \cto (B^{+} \cup Y )$
contains a negative cycle $C$ other than the half edges, then $C$ would be a negative cycle in $\Sigma$ which
is disjoint from $v$, and thereby vertex disjoint with any negative cycle in $B^{-}$ implying that $\Sigma$
is not tangled. 
\begin{figure}[h] 
\begin{center}
\centering
\psfrag{S1}{\footnotesize $\Sigma \cto (B^{+} \cup B^{-} \cup Y )$}
\psfrag{S2}{\footnotesize $\Sigma \cto (B^{+} \cup Y )$}
\psfrag{B+}{\footnotesize $B^{+}$}
\psfrag{B-}{\footnotesize $B^{-}$}
\psfrag{Bbb}{\footnotesize $\bar{B}$}
\psfrag{v}{\tiny $v$}
\includegraphics*[scale=0.30]{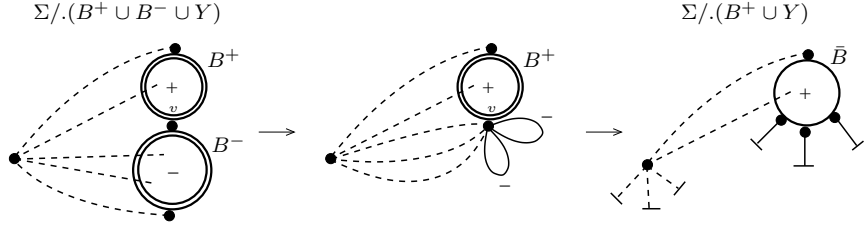}
\end{center}
\caption{$B^{+}$ in the unbalanced component of $\Sigma \dof Y$.}
\label{fig:balanced_graphic_2}
\end{figure}
\end{proof}

Theorem~\ref{th_tu84} is an extension of a result of Tutte in~\cite{Tutte:1959} (Theorem 8.4) regarding graphic matroids.  
It shows that given a signed graphic matroid $M$ and some non-graphic cocircuit $Y$ with no two overlapping bridges, there exists a 
signed graph representation where $Y$ is the star of a vertex. It is an important structural result that will be used in
the decomposition Theorem~\ref{thrm_decomposition}.
\begin{theorem} \label{th_tu84}
Let $Y$ be a non-graphic cocircuit of a connected binary signed-graphic matroid $M$ such that no two bridges of
 $Y$ in $M$ overlap. Then  there exists a $2$-connected signed graph $\Sigma$ where $Y$ is the star of a vertex 
$v\in{V(\Sigma)}$ and $M=M(\Sigma)$.
\end{theorem}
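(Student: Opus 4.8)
The plan is to start from the graphical characterization of $\pi(M(\Sigma),B,Y)$ given by Theorem~\ref{thrm_sep81} together with the bridge-separability established in Theorem~\ref{th_louloudi}, and to mimic the strategy of Tutte's Theorem~8.4 in~\cite{Tutte:1959}: build the desired signed graph $\Sigma$ by attaching the various $Y$-components along the cocircuit $Y$. Since $M$ is binary, non-graphic and signed-graphic, by Theorem~\ref{th_sliii} any signed-graph representation is tangled, and by Section~\ref{subsec_corc_bonds} the cocircuit $Y$ must be a star bond or an unbalancing bond whose complement has a core which is not a B-necklace. First I would fix one such representation $\Sigma_0$ of $M$; then $Y$ is a bond of $\Sigma_0$, $\Sigma_0\backslash Y$ has exactly two components by Theorem~\ref{thrm_tangled2}, and exactly one bridge $B^{-}$ of $Y$ is unbalanced while all others are balanced.

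The key step is a reconstruction/gluing argument. By hypothesis no two bridges overlap, so by the definition of bridge-separability (and by Theorem~\ref{th_louloudi} this classification is automatic here) the bridges split into two classes; combined with Theorem~\ref{thrm_sep81}, which identifies $\pi(M(\Sigma_0),B,Y)$ with the partition of $Y$ into the sets $Y(B,v)$, $v\in V(\Sigma_0\!:\!B)$, each $Y$-component $M\!\cto\!(B\cup Y)$ is itself a (small) signed-graphic matroid in which $Y$ is a star bond at a single vertex $u_B$: indeed for a balanced bridge $M\!\cto\!(B\cup Y)$ is graphic by Lemma~\ref{lem_balanced=graphic}, with $Y$ a star, and for the unique unbalanced bridge $B^{-}$ the edges of $Y$ become half-edges at one vertex (as in the proofs of Lemma~\ref{lem_un1} and Lemma~\ref{lem_balanced=graphic}, Case~3 of Theorem~\ref{thrm_sep81}), so $Y$ is again the star of a vertex there. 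Then I would glue all these star representations together by identifying the centres $u_B$ of all the $Y$-components into a single vertex $v$ and re-attaching the bridge-subgraphs $\Sigma\!:\!B$ (suitably switched so that all balanced bridges have only positive edges) to the ends of the edges of $Y$ according to the partitions $\pi(M,B,Y)$. The resulting signed graph $\Sigma$ has $Y$ as the star of $v$; one checks $M(\Sigma)=M$ by verifying that the circuits of $M(\Sigma)$ meeting $Y$ and those avoiding $Y$ match those of $M$, using Corollary~\ref{cor_corl1} and the fact that two matroids with the same cocircuit $Y$ and the same $\pi(\cdot,B,Y)$ for every bridge and the same $Y$-components coincide — this is exactly Tutte's bridge-reconstruction principle applied in the binary setting.

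The main obstacle, and the place requiring genuine care, is showing that the glued graph $\Sigma$ is actually $2$-connected and tangled, and that the gluing is globally consistent — i.e. that the local star-representations of the individual $Y$-components can be chosen with a \emph{coherent} assignment of the edges of $Y$ to vertices, so that a single edge $y\in Y$ gets a well-defined pair of end-vertices (one in a bridge of one class, one in a bridge of the other class, or one end at $v$ in the star/half-edge case). Non-overlapping of bridges is precisely what guarantees this coherence: by Theorem~\ref{thrm_sep81} overlapping would force members of $\pi(M,B_1,Y)$ and $\pi(M,B_2,Y)$ that cannot be simultaneously realized as stars at a common vertex $v$, whereas the two-class structure lets us put one class ``on each side'' of $v$. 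I would also need to argue that after the gluing no new negative cycle appears that is vertex-disjoint from the negative cycles inside $B^{-}$ (otherwise $\Sigma$ would fail to be tangled, contradicting Theorem~\ref{th_sliii} since $M(\Sigma)=M$ is binary non-graphic); this follows because all balanced bridges are switched positive and any negative cycle must therefore pass through $v$ or through the unbalanced bridge $B^{-}$. Finally, $2$-connectedness of $\Sigma$ follows from connectedness of $M$ together with Theorem~\ref{th_nero1}, once $M(\Sigma)=M$ is established.
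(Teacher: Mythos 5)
Your outline proceeds by reconstruction: represent each $Y$-component separately with $Y$ as a star, glue all the centres $u_B$ into one vertex $v$, and appeal to a ``bridge-reconstruction principle'' to conclude that the glued signed graph represents $M$. The paper's proof is genuinely different and, crucially, avoids reconstruction altogether: it fixes one tangled $2$-connected representation $\Sigma$ of $M$ in which $Y$ is an unbalancing bond, and then performs a sequence of reversings (Whitney twists) about $2$-separations of $\Sigma$ --- the $2$-separations being read off from the avoidance relations $Y(B_0,v)\cup Y(B_i,v_1)=Y$ via Theorem~\ref{thrm_sep81} --- so as to migrate the separates of one side of $\Sigma\backslash Y$ across to the other side, one group of bridges at a time, until that side is a single vertex. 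At each step it checks, via Proposition~\ref{prop_samematroid}, that every negative cycle in a reversing part contains both defining vertices, so the matroid never changes; the identity $M=M(\Sigma)$ therefore never has to be re-established.

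That is exactly where your proposal has a genuine gap. The statement you lean on --- that two connected binary matroids with the same cocircuit $Y$, the same bridges, the same partitions $\pi(\cdot,B,Y)$ and the same $Y$-components must coincide --- is not a citable result; it is essentially the whole difficulty. It is the content of Tutte's Theorem~8.5 in the graphic case, and of the hard direction of Theorem~\ref{thrm_decomposition} here, where even with only \emph{two} pieces $U^{+},U^{-}$ the equality $M=M(\Sigma)$ requires the symmetric-difference argument with the family $\mathbb{C}$ plus an induction on $|E(M)|$ --- and that proof in turn invokes Theorem~\ref{th_tu84}, so building your argument on it would be circular. Moreover the gluing itself is underspecified: identifying all the centres into one vertex does not determine a signed graph. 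You must also decide which vertices of \emph{distinct} bridges are identified with one another, and, for each $y\in Y$, which single vertex of $\bigcup_B V(\Sigma\dto B)$ is its non-central end, consistently with $y\in Y(B,w_B)$ for every bridge $B$ simultaneously. You correctly flag this coherence problem as the main obstacle, but asserting that non-overlapping ``guarantees'' it is not a proof: extracting a consistent vertex placement from the abstract data $\pi(M,B,Y)$ is the combinatorial heart of the matter, and the outline does not supply it. The paper's twisting strategy sidesteps all of this because a consistent placement already exists inside the given representation $\Sigma$ and is only deformed by matroid-preserving moves.
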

\begin{proof}
By Theorem~\ref{th_sliii} there exists a tangled 2-connected signed graph $\Sigma$ where $M=M(\Sigma)$ and $Y$ is either 
a star bond or an unbalancing bond such that the core of $\Sigma \dof Y$ is not a B-necklace. If $Y$ is a star bond there is nothing
to prove. 

Let $Y$ be an unbalancing bond of $\Sigma$, while  $\Sigma_{1}$ and $\Sigma_{2}$ the two non-empty components  of $\Sigma \dof Y$,
where one of them will contain  the unique  unbalanced block corresponding to bridge, say $B^{-}$, of $Y$ (see Theorem~\ref{thrm_tangled2}). 
Furthermore, assume that we have performed switchings such that only $Y$ and $B^{-}$ may contain edges with
negative sign. In what follows we will show that there exist disjoint 2-separations in $\Sigma$, such that by reversing on the defining vertices we can reduce the
size of one of the components of $\Sigma \dof Y$ by one separate at a time. Moreover it will be proved that the aforementioned reversings
do not alter the signed graphic matroid $M(\Sigma)$ .

Fix an arbitrary bridge $B_{0}$ of $Y$ in $M(\Sigma)$  where $\Sigma \dto B_{0}$ is a separate of $\Sigma_{2}$. For any bridge $B_{1}$ of
$Y$ such that $\Sigma \dto B_{1}$ is a separate of $\Sigma_{1}$, we know by Theorem~\ref{thrm_sep81} that there exist $v_{0} \in V(\Sigma \dto B_{0})$ and 
$v_{1} \in V(\Sigma \dto B_{1})$ such that 
\begin{equation}\label{eq_part_Y_1}
Y(B_{0}, v_{0}) \cup Y(B_{1}, v_{1}) = Y. 
\end{equation}
Choose $B_{1}, v_{1}$ and $v_{0}$ such that the number of edges of $C(B_{1},v_{1})$ is the least possible. If
$\Sigma_{i}$ the component of $\Sigma \dof Y$ such that $\Sigma \dto B \subseteq \Sigma_{i}$, define $F(B,v) := \Sigma_{i} \dof E( C(B,v) )$. 
Let $B_{1},B_{2},\ldots, B_{k}$ be the bridges of $Y$ such that $v_{1} \in V(\Sigma \dto B_{i})$ for $i=1,\ldots,k$, and consider any $B_{i}$. We know that
there exist $w \in V(\Sigma \dto B_{i})$ and $v \in V(\Sigma \dto B_{0})$ such that 
\begin{equation}\label{eq_part_Y_2}
Y(B_{i}, w) \cup Y(B_{0}, v) = Y. 
\end{equation}
We will snow that $w=v_{1}$ for all $i=1,\ldots,k$. From \eqref{eq_part_Y_1} and the fact that $\Sigma$ is 2-connected, we can deduce that there exists
at least one edge $e\in Y$ with one end-vertex in $F(B_{1},v_{1})$ and the other end-vertex in $C(B_{0},v_{0})$. Suppose now that $w \neq v_{1}$.  Then
$e \notin Y(B_{i},w)$ which implies that $v=v_{0}$ for \eqref{eq_part_Y_2} to be true. This contradicts the choice of $B_{1}, v_{1}$ and $v_{0}$ since
$E( C(B_{i},w) ) \subset E( C(B_{1},v_{1}) )$. The situation for $k=4$ is depicted in Figure~\ref{fig:2_separations}.

\begin{figure}[h] 
\begin{center}
\centering
\psfrag{S1}{\footnotesize $\Sigma_{1}$}
\psfrag{S2}{\footnotesize $\Sigma_{2}$}
\psfrag{w1}{\tiny $v_0$}
\psfrag{w}{\tiny $v$}
\psfrag{v1}{\tiny $v_1$}
\psfrag{B}{\tiny $B_0$}
\psfrag{B1}{\tiny $B_1$}
\psfrag{B2}{\tiny $B_2$}
\psfrag{B3}{\tiny $B_3$}
\psfrag{B4}{\tiny $B_4$}
\psfrag{Yw}{\tiny $Y(B_{0}, v)$}
\psfrag{FB1}{\tiny $F(B_1, v_1)$}
\psfrag{FB2}{\tiny $F(B_2, v_1)$}
\psfrag{FB3}{\tiny $F(B_3, v_1)$}
\psfrag{FB4}{\tiny $F(B_4, v_1)$}
\psfrag{CBw1}{\tiny $C(B_0, v_0)$}
\psfrag{CB1}{\tiny $C(B_1, v_1)$}
\psfrag{e}{\tiny $e$}
\includegraphics*[scale=0.40]{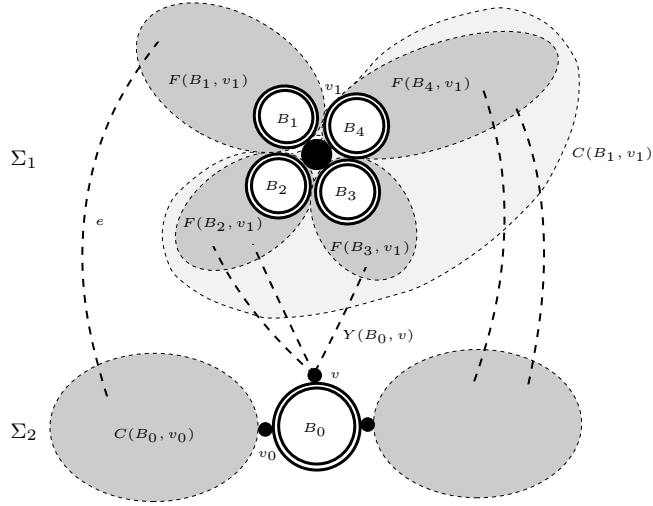}
\end{center}
\caption{Groups of bridges as 2-separations in $\Sigma$.}
\label{fig:2_separations}
\end{figure}
Partition $\{ B_{1},B_{2},\ldots, B_{k} \}$ into groups of bridges where $Y(B_{0}, v)$  from \eqref{eq_part_Y_2} is common to all bridges in a group. 
For example in Figure~\ref{fig:2_separations}, both $B_{2}$ and $B_{3}$ have $Y(B_{0}, v)$ in \eqref{eq_part_Y_2}. 
Each such
group $Q(v)$ defines a 2-separation in $\Sigma$, with defining vertices $v$ and $v_1$, and a partition $\{T, E(\Sigma) \dof T \}$ of $E(\Sigma)$ where
\begin{equation*}
T = E ( C(B_{0}, v) ) \cup Y(B_{0},v) \cup \left( \bigcup_{B_{i} \in Q(v)} E( F(B_{i}, v_{1} ) ) \right).
\end{equation*}
This is so, since for any $B_{i} \in Q(v)$ for $i=1,\ldots, k$, there are no edges of $Y$ with one end-vertex in $F(B_{i},v_{1})$ and the other in $F(B_{0},v)$ 
due to the avoidance between $B_{i}$ and $B_{0}$. Therefore by reversing about $v_{1}$ and $v$ for every group of bridges $Q(v)$, we can 
create a signed graph $\Sigma'$ with $Y$ an unbalancing bond such that one of the components of $\Sigma' \dof Y$ is $\Sigma_{2}$ with 
$B_{0}$ replaced by a vertex, while the other component is $\Sigma_{1}$ with $v_{1}$ replaced by $B_{0}$.

It remains to be shown that the reversings mentioned above do not alter $M(\Sigma)$. For every group of bridges $Q(v)$ we can identify 
three cases (see Figure~\ref{fig_2-separations}): 
(a) $C(B_{0},v)= \emptyset = \bigcap_{B_{i}\in Q(v)} C(B_{i},v_{1})$, 
(b) $C(B_{0},v)= \emptyset$ and $\bigcap_{B_{i}\in Q(v)} C(B_{i},v_{1}) \neq \emptyset$, and 
(c) $C(B_{0},v) \neq \emptyset \neq \bigcap_{B_{i}\in Q(v)} C(B_{i},v_{1})$. 
We will show that  reversing $\Sigma$ about $\{v_{1}, v\}$ produces a signed graph $\Sigma'$ such that $M(\Sigma') = M(\Sigma)$ in every case. 
\begin{figure}[hbtp]
\begin{center}
\centering
\psfrag{v1}{\tiny $v_1$}
\psfrag{v2}{\tiny $v$}
\psfrag{B1}{\tiny $B_{i}$}
\psfrag{B2}{\tiny $B_{0}$}
\psfrag{Bk}{\tiny $B_{k}$}
\psfrag{S1}{\footnotesize $\Sigma_{1}$}
\psfrag{S2}{\footnotesize $\Sigma_{2}$}
\psfrag{Y1}{\tiny $\bigcap_{B_{i}\in Q(v)} Y(B_{i},v_{1}) $}
\psfrag{Y2}{\tiny $Y(B_{0},v)$}
\psfrag{C1}{\tiny $\bigcap_{B_{i}\in Q(v)} C(B_{i}, v_{1})$}
\psfrag{C2}{\tiny $C(B_{0}, v)$}
\mbox{
\subfigure[]
{
\includegraphics*[scale=0.265]{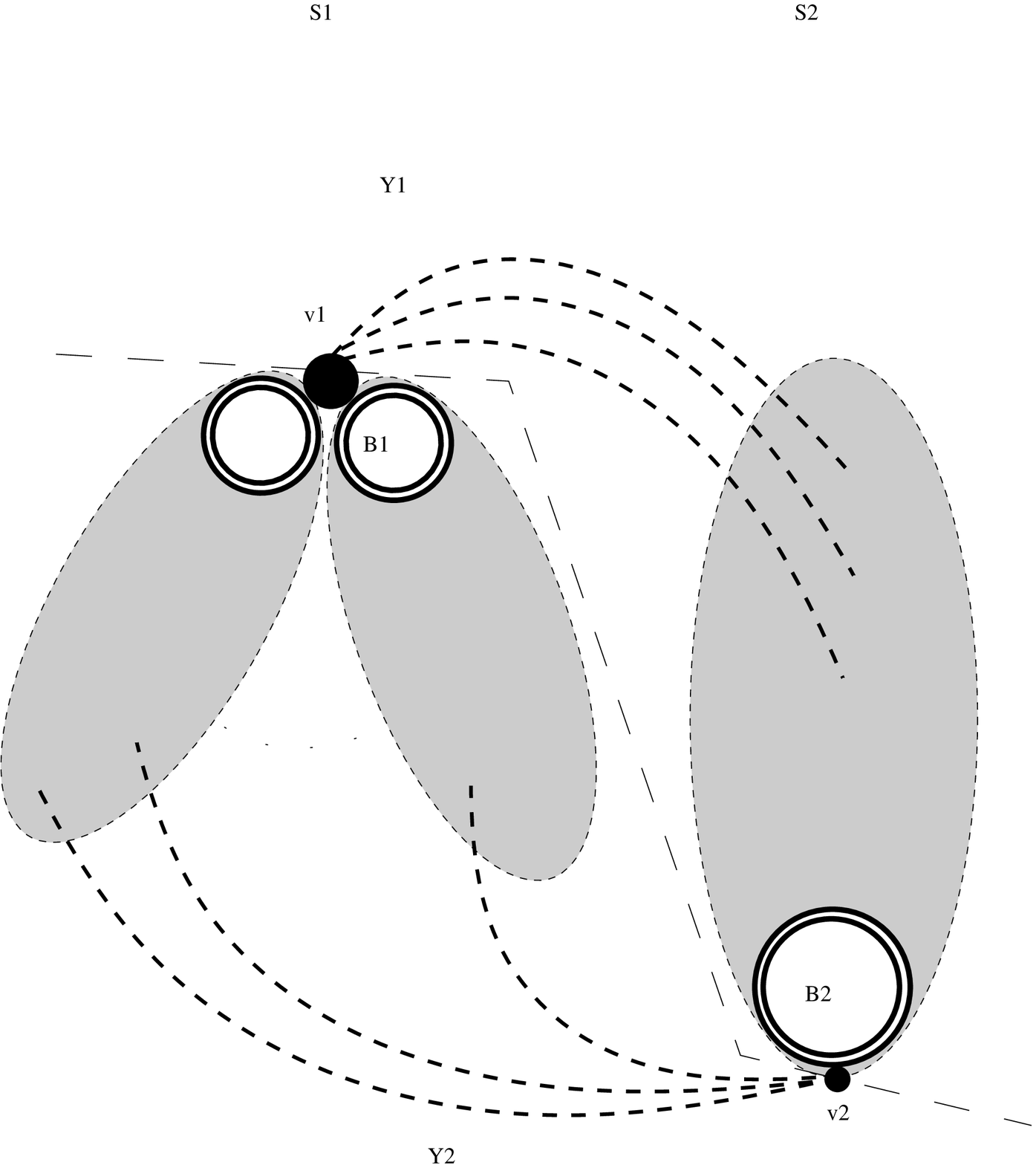}
}\quad
\subfigure[]
{
\includegraphics*[scale=0.265]{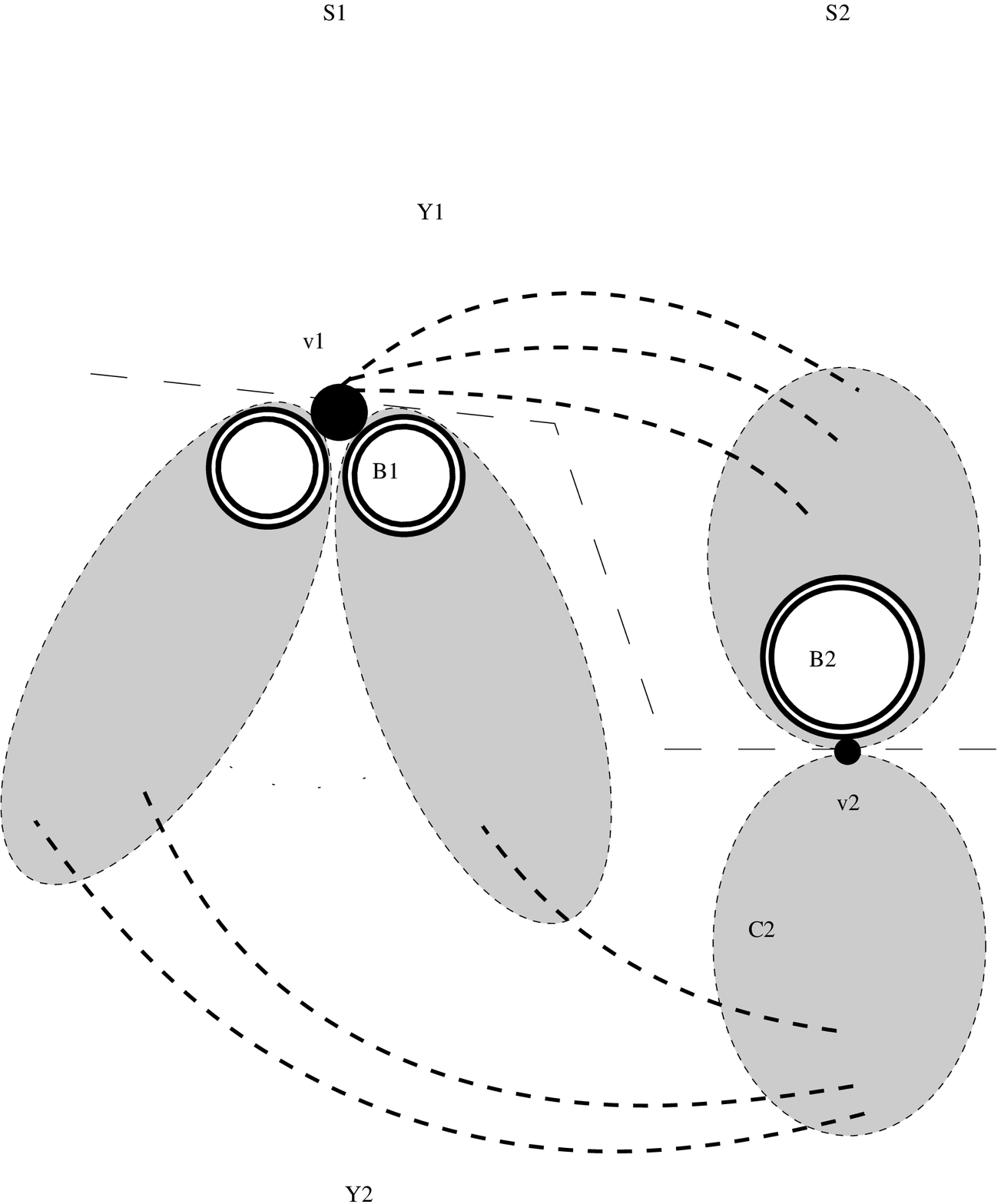}
}\quad
\subfigure[]
{
\includegraphics*[scale=0.265]{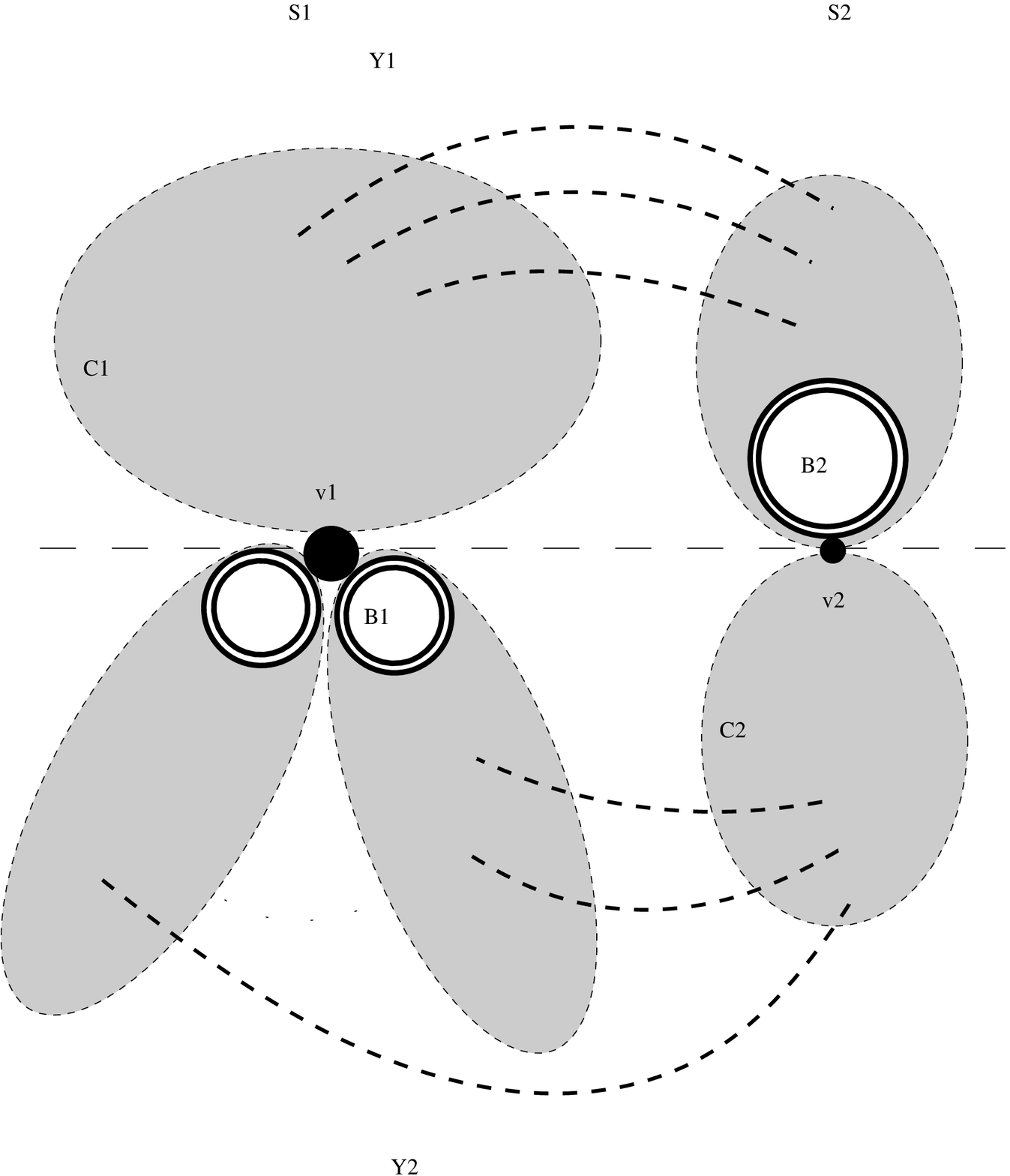}
}} \caption{2-separations from non-overlapping bridges}
\label{fig_2-separations}
\end{center}
\end{figure}

Consider the first case. The reversing parts of $\Sigma$ about $v_{1}$ and $v$ are
$\Sigma_{1}' = \Sigma : E_{1}$ and $\Sigma_{2}' = \Sigma : (E(\Sigma)-E_{1})$ where $E_{1} = E(\Sigma_{1}) \cup Y(B_{0}, v)$. We first assume that 
$B^{-} \subseteq E(\Sigma_{2})$.  If $\Sigma_{1}'$ contains no negative cycles, then by Proposition~\ref{prop_samematroid}  $M(\Sigma') = M(\Sigma)$.  
Let $C_{1}$ be a negative cycle in $\Sigma_{1}'$. Since the only negative edges of $\Sigma_{1}'$  appear in $Y(B_{0},v)$ we will have that  
$E(C_{1}) \cap Y(B_{0}, v) \neq \emptyset$ which in turn implies that $v \in V(C_{1})$. Moreover, $B_{0} = B^{-}$ because otherwise, $C_{1}$ would be vertex disjoint 
with all the negative cycles contained in $B^{-}$, a contradiction since $\Sigma$ is tangled.  Now, $\Sigma_{2}'$ will contain a negative cycle $C_{2}$ such that 
$v \notin V(C_{2})$, otherwise $v$ would be a balancing vertex in $\Sigma$, contradicting the fact that $M(\Sigma)$ is a  non-graphic matroid (see 
Proposition~\ref{prop_sgn_graphic}).  Therefore $C_{2}$ is not contained in $B^{-}$, which implies that  
$E(C_{2}) \cap \bigcap_{B_{i}\in Q(v)}Y(B_{i}, v_{1}) \neq \emptyset$. In that case $v_{1} \in V(C_{1})$, otherwise $C_{1}$ and $C_{2}$ would be vertex disjoint. 
We can therefore conclude that any negative cycle in $\Sigma_{1}'$ contains the vertices $v_{1}$ and $v$, and by Proposition~\ref{prop_samematroid} 
$M(\Sigma') = M(\Sigma)$. If now we assume that $B^{-} \subseteq E(\Sigma_{2})$ then the proof follows the above lines with the minor difference that $B^{-}$ can 
be any $B_{i} \in Q(v)$.

Consider now that $C(B_{0},v)= \emptyset$ and $\bigcap_{B_{i}\in Q(v)} C(B_{i},v_{1}) \neq \emptyset$. The reversing parts of $\Sigma$ about $v_{1}$ and $v$ are 
$\Sigma_{1}' = \Sigma : E_{1}$ and $\Sigma_{2}' = \Sigma : (E(\Sigma)-E_{1})$ where $E_{1} = E(\Sigma_{1}) \cup Y(B_{0}, v) \cup E(C(B_{0}, v))$. We shall consider 
cases regarding the subgraph containing $B^{-}$. In all cases we assume that both reversing parts are unbalanced, otherwise   $M(\Sigma') = M(\Sigma)$ due to 
Proposition~\ref{prop_samematroid} and thus, there is nothing to prove.  Assume that $B^{-}$ is in $\Sigma_{1}'$, and consider any negative cycle $C_{1}$ in $\Sigma_{2}'$. 
If $B^{-} \subseteq E(C(B_{0}, v))$, then  since $E(C_{1}) \cap \bigcap_{B_{i}\in Q(v)}Y(B_{i}, v_{1}) \neq \emptyset$ and 
$v_{1} \in V(C_{1})$, for $C_{1}$ not to be vertex disjoint with some 
negative cycle in $B^{-}$ we must have that $v \in V(C_{1})$ also. If $B^{-} \subseteq E(\Sigma_{1})$, then  $B^{-}$ is any $B_i\in Q(v)$, and we have a case similar 
 to (a).  Alternatively, let $B^{-}$ be in $\Sigma_{2}'$, and let $C_{1}$ be a negative cycle in $\Sigma_{1}'$. Then 
$E(C_{1}) \cap Y(B_{0}, v) \neq \emptyset$ and 
$v \in V(C_{1})$, while $B_{0} = B^{-}$. In order for $v$ not to be a balancing vertex there must exist a cycle $C_2$ in $\Sigma_2'$ which will have $v$ as a vertex. 
However, since $\Sigma$ is tangled, $C_1$ and $C_2$ must not be vertex disjoint and  therefore, $v_{1} \in V(C_{1})$. Thus, in all cases $C_{1}$ 
contains both vertices $v_1$ and $v$ and by Proposition~\ref{prop_samematroid} $M(\Sigma') = M(\Sigma)$. The case (c) is similar to the previous cases. 
\end{proof}

We are now in a position to prove the main result of this paper. 
\begin{theorem}\label{thrm_decomposition}
Let $M$ be a connected binary matroid and $Y\in \mathcal{C}^{*}(M)$ be a non-graphic cocircuit. Then $M$ is signed-graphic if and only if:
\begin{itemize}
 \item [(i)] $Y$ is bridge-separable, and
 \item [(ii)] the $Y$-components of $M$ are all graphic apart from one which is signed-graphic.
\end{itemize}
\end{theorem}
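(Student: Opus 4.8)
The plan is to establish the two implications separately. The ``only if'' direction is a short deduction from the structural results of this section, while the ``if'' direction is a Tutte-style reconstruction which amalgamates the (signed-)graphical representations of the $Y$-components of $M$ along $Y$.

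For the ``only if'' direction, suppose $M$ is signed-graphic. Since $M\dof{Y}$ is a non-graphic minor of $M$, the matroid $M$ is itself non-graphic, so by Theorems~\ref{th_nero1} and~\ref{th_sliii} we may write $M=M(\Sigma)$ for a $2$-connected tangled signed graph $\Sigma$, and by the classification of cocircuits in Section~\ref{subsec_corc_bonds} the non-graphic cocircuit $Y$ is a star bond or an unbalancing bond of $\Sigma$ such that the core of $\Sigma\dof{Y}$ is not a B-necklace. Part (i) is then immediate from Theorem~\ref{th_louloudi}. For part (ii), Theorem~\ref{thrm_tangled2} gives that $\Sigma\dof{Y}$ has exactly one unbalanced block, and since the core of $\Sigma\dof{Y}$ is not a B-necklace, Theorem~\ref{th_Zasl11} shows that this unbalanced block is the unique unbalanced separate of $\Sigma\dof{Y}$; hence there is exactly one bridge $B^{-}$ of $Y$ with $\Sigma\dto{B^{-}}$ unbalanced, and $\Sigma\dto{B}$ is balanced for every other bridge $B$. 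By Lemma~\ref{lem_balanced=graphic} the $Y$-component of each balanced bridge is graphic, while $M\cto(B^{-}\cup Y)$ is a minor of the signed-graphic matroid $M$ and hence signed-graphic (signed-graphic matroids are closed under minors, by Theorem~\ref{th_minsig}). This is exactly (ii).

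For the ``if'' direction, assume (i) and (ii), and let $B_1,\dots,B_r$ be the bridges of $Y$, numbered so that the $Y$-component $N_i:=M\cto(B_i\cup Y)$ is graphic for $i<r$ and $N_r$ is signed-graphic. Since $M$ is connected each $N_i$ is connected~\cite{Tutte:1959}, and $M\dof{Y}$ is the direct sum of its restrictions to its elementary separators $B_i$; a routine computation with deletion and contraction gives $N_i\dof{Y}=M\dto{B_i}$. As $Y$ is a non-graphic cocircuit, $M\dof{Y}$ is non-graphic; since a direct sum of graphic matroids is graphic and each $N_i$ with $i<r$ (hence each $M\dto{B_i}$ with $i<r$) is graphic, it follows that $N_r\dof{Y}=M\dto{B_r}$ is non-graphic, i.e.\ $Y$ is a non-graphic cocircuit of the signed-graphic matroid $N_r$. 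As $B_r$ is the unique bridge of $Y$ in $N_r$, no two bridges of $Y$ in $N_r$ overlap, so Theorem~\ref{th_tu84} yields a $2$-connected signed graph $\Sigma_r$ with $N_r=M(\Sigma_r)$ in which $Y$ is the star of a vertex $v_r$. For $i<r$, $Y$ is a non-separating cocircuit of the connected graphic matroid $N_i$, so by Tutte's Theorem~8.4 in~\cite{Tutte:1959} (the graphic precursor of Theorem~\ref{th_tu84}) there is a $2$-connected graph $G_i$ with $N_i=M(G_i)$ in which $Y$ is the star of a vertex $v_i$. It then remains to amalgamate $\Sigma_r$ and $G_1,\dots,G_{r-1}$ into a single signed graph $\Sigma$ with $M(\Sigma)=M$: identify $v_1,\dots,v_r$ to a single vertex $v$, so that $Y$ is the star of $v$, and identify the non-central endpoints of the edges of $Y$ across the pieces according to the partitions $\pi(M,B_i,Y)=\mathcal{C}^{*}(N_i\dto{Y})$ of Corollary~\ref{cor_corl1}. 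Bridge-separability, which splits the bridges into two classes within which any two bridges avoid each other, is precisely what makes these identifications mutually consistent, exactly as in Tutte's reconstruction for graphic matroids~\cite{Tutte:1965}. Calling the result $\Sigma$, one verifies that a subset of $E(M)$ is a cocircuit of $M$ if and only if it is a bond of $\Sigma$: the cocircuits of $M$ disjoint from $Y$ are the cocircuits of $M\dof{Y}=\bigoplus_i M\dto{B_i}$ and are visibly bonds of $\Sigma$, while a cocircuit meeting $Y$ is determined by $Y$ together with the partitions $\pi(M,B_i,Y)$ it induces and, by bridge-separability, the corresponding edge set is a bond of $\Sigma$; conversely every bond of $\Sigma$ arises this way. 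Since $\Sigma$ admits a tangled representation (or has a balancing vertex in the degenerate case), $M(\Sigma)$ is binary by Theorem~\ref{th_sliii}, and the cocircuit check forces $M(\Sigma)=M$, so $M$ is signed-graphic.

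The main obstacle is this last reconstruction step and the verification that $M(\Sigma)=M$. In contrast with the purely graphic situation treated by Tutte, the piece $\Sigma_r$ may contribute half-edges and negative cycles, so one must show that identifying it with the balanced pieces $G_i$ produces a \emph{tangled} signed graph with a single unbalanced block --- which is exactly why only one $Y$-component is permitted to be non-graphic --- rather than a signed graph containing two vertex-disjoint negative cycles or a balancing vertex. Bridge-separability excludes incompatible gluings just as in the graphic case, but the bookkeeping over the four bond types of a tangled signed graph (reflected already in the case analysis of Theorem~\ref{thrm_sep81} and Theorem~\ref{th_tu84}) is the technical heart of the argument.
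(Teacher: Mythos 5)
Your ``only if'' direction follows the paper's argument almost verbatim, with one omission: you show the $Y$-component of the unbalanced bridge is signed-graphic but never that it fails to be graphic, which is what (ii) asserts. The paper closes this by noting that if \emph{all} $Y$-components were graphic, Theorem~8.5 of Tutte~\cite{Tutte:1959} would force $M$ itself to be graphic, contradicting the existence of a non-graphic cocircuit.

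The real problem is the amalgamation in your ``if'' direction. You realize each $Y$-component $N_i$ separately with $Y$ a star at a vertex $v_i$, then ``identify $v_1,\dots,v_r$ to a single vertex $v$, so that $Y$ is the star of $v$.'' This is wrong whenever the two avoidance classes are both nonempty: in the target signed graph, $Y$ is in general an \emph{unbalancing} bond whose deletion leaves two nontrivial components, and the star centre of $G_i$ represents the contracted \emph{opposite} side of $Y$ from $B_i$. Star centres of bridges in the same class do coincide in the final graph, but star centres of bridges in different classes are different vertices; identifying all of them collapses the two sides of the bond. A purely graphic example already kills the construction: let $G$ be the $4$-cycle $ab,bc,cd,da$ and $Y=\{ab,cd\}$, with bridges $B_1=\{bc\}$, $B_2=\{da\}$ on opposite sides. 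Both $Y$-components are triangles with $Y$ a star; your gluing produces a graph on three vertices in which $\{bc,da\}$ is a $2$-element circuit, whereas $M(G)$ has the single circuit $\{ab,bc,cd,da\}$. The correct route --- the paper's --- is to first form $M\cto(U^{+}\cup Y)$ and $M\cto(U^{-}\cup Y)$ for the two avoidance classes, show each is connected with $Y$ a cocircuit whose bridges are pairwise non-overlapping, represent each with $Y$ as the star of a single vertex (Tutte's Theorem~8.4 for the graphic class, Theorem~\ref{th_tu84} \emph{in its multi-bridge strength} for the signed-graphic class; your application of Theorem~\ref{th_tu84} only to the single-bridge $N_r$ never combines $B^{-}$ with the other bridges of its class), and then glue the two representations by deleting \emph{both} star centres and joining matching end-vertices of each edge of $Y$. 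Two further points you elide: the fact that $M\cto(U^{-}\cup Y)$ is signed-graphic is not immediate from (ii) --- the paper gets it from a minimal-counterexample induction on $|E(M)|$ --- and the final verification $M=M(\Sigma)$ requires an actual argument (the paper's symmetric-difference analysis of $\mathcal{C}^{*}(M(\Sigma))$ versus $\mathcal{C}^{*}(M)$), not just the assertion that cocircuits and bonds ``visibly'' correspond.
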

\begin{proof}
Assume that $M$ is signed-graphic. Since it is  binary and not graphic, by Theorem~\ref{th_sliii} there exists a tangled
signed graph $\Sigma$ such that $M=M(\Sigma)$. Moreover since $M\dof Y$ is not graphic, $Y$ cannot be a balancing bond
or unbalancing bond of $\Sigma$ such that $\Sigma \dof Y$ contains a B-necklace. Therefore $Y$ is either a star bond or 
an unbalancing bond such that $\Sigma \dof Y$ does not contain a B-necklace, and by Theorem~\ref{th_louloudi}
we can conclude that $Y$ is a bridge-separable cocircuit of $M$. By Theorem~\ref{thrm_tangled2}, $\Sigma \dof Y$ will
contain exactly one unbalanced block, say  $\Sigma \dto B^{-}$ which is not a B-necklace, and $k$ balanced blocks $\Sigma \dto B_{i}$
where $k\geq 0$. By Theorem~\ref{th_Zasl11} these blocks are the elementary separators of $M(\Sigma\dof Y) =M(\Sigma)\dof Y$, and 
therefore the bridges of $Y$ in $M(\Sigma)$. By Lemma~\ref{lem_balanced=graphic}
we have that $M(\Sigma)\cto (B_{i}\cup Y)$ is graphic for each $i$.  Now, since $M(\Sigma)\cto (B^{-}\cup Y)$ is a minor
of $M(\Sigma)$ it can be either signed-graphic or graphic. It cannot be graphic though since otherwise we would have a 
bridge-separable cocircuit of a connected binary matroid with all $Y$-components graphic, and by Theorem (8.5) of Tutte 
in~\cite{Tutte:1959}, $M$ should be graphic.

Assume by  contradiction that there exists $M$ and $Y\in  \mathcal{C}^{*}(M)$ such that the theorem is not true, and
among these choose the one with the least $| E(M) |$.

If $Y$ has only one bridge $B$, then $M=M\cto ( B\cup Y)$ and $M$ is signed-graphic by assumption.
Given that $Y$ has more than one bridge and its bridge-separable, we partition the bridges of $Y$ into two nonempty families
$L^{-}$ and $L^{+}$ such that no two members of the same family overlap. Furthermore let $B^{-}\in L^{-}$, where $B^{-}$ is the 
bridge of $Y$ corresponding to the unique signed-graphic component $M\cto ( B^{-} \cup Y^{-})$. Let $U^{-},U^{+} \subseteq E(M)$ 
be the unions of the members of $L^{-}$ and $L^{+}$ respectively. 

Let us now consider the matroids $M\cto (U^{-}\cup Y)$ and $M\cto (U^{+} \cup Y)$, and let $U$ denote both $U^{+}$ and $U^{-}$. 
By Theorem (2)
in~\cite{Tutte:1960} we know that $M\cto (B \cup Y)$ is connected for any $B\in U$. If $S$ is a separator of $M\cto (U \cup Y)$, then 
there exists some $B\in U$, such that $S \cap (B\cup Y) \neq \emptyset$. By the definition of contraction operation then, 
$S\cap (B\cup Y)$ would be a separator in $M\cto (B \cup Y)$. We can therefore conclude that the matroids $M\cto (U^{-}\cup Y)$ 
and $M\cto (U^{+} \cup Y)$ are connected.  Now since $M\cto (U\cup Y) = (M ^{*} \dto (U \cup Y))^{*}$ by the definition of 
contraction, we have
\[
(M\cto (U\cup Y) )^{*} = M^{*} \dto (U \cup Y),
\]
which implies that $Y$ is a cocircuit of $M\cto (U\cup Y)$.
By Theorem (7.4) in~\cite{Tutte:1959} we know that the bridges of $Y$ in $M\cto (U\cup Y)$ are the members of $L^{-}$ and $L^{+}$ 
respectively, and $\pi ( M\cto (U\cup Y), B, Y) = \pi ( M, B,  Y) $ for all $B\in U$, which means that the bridges of $Y$ are 
non-overlapping in both matroids. Moreover the $Y$-components in both $M\cto (U^{-} \cup Y)$ and  $M\cto (U^{+} \cup Y)$ are 
the $Y$-components in $M$, since $M\cto (U\cup Y) \cto (B \cup Y) = M \cto (B \cup Y)$.
We can therefore conclude that $M\cto (U^{+} \cup Y)$ is a graphic matroid by Theorem (8.5) of~\cite{Tutte:1959}, while
$M\cto (U^{-} \cup Y)$ is signed-graphic since its smaller than $M$, and $Y$ is a cocircuit with non-overlapping bridges in both. 

By Theorem (8.4) in~\cite{Tutte:1959} there exists a 2-connected graph $G^{+}$ such that $M\cto (U^{+} \cup Y) = M(G^{+})$ and $Y$ 
will be a star at a vertex say $w^{+}$. By Theorem~\ref{th_tu84} there exists a 2-connected tangled signed graph 
$\Sigma^{-} := (G^{-}, \sigma^{-} )$ such that $M\cto (U^{-} \cup Y ) = M(\Sigma^{-})$ and $Y$ is a star bond say at vertex $v^{-}$. 
Construct now a signed graph $\Sigma := (G, \sigma)$ as follows. The underlying graph $G$ is obtained by the graphs $G^{+}\dof Y$ 
and $G^{-}\dof Y$ were $w^{-}$ and $w^{+}$ are deleted, by adding an edge between every end-vertex of $Y$ in $G^{+}$ to the 
corresponding end-vertex of this edge in $G^{-}$. The sign function $\sigma$ will be
\begin{equation*}
\sigma (e) := \left\{ \begin{array}{ll}
                    \sigma^{-} (e),   & \mbox{if  $e\in E(\Sigma^{-})$,} \\
                    +1,   & \mbox{otherwise}.
                \end{array}
          \right. 
\end{equation*}
Since $G^{+}$ and $G^{-}$ are 2-connected and $Y$ is a star of  a vertex in both, $Y$ would be a minimal set of 
edges in $\Sigma$ such that its deletion creates two components, and namely $\Sigma : U^{+}$ and $\Sigma : U^{-}$.
The component $\Sigma : U^{+}$ contains only positive edges by construction,  therefore its balanced. If $\Sigma : U^{-}$ did 
not contain a negative cycle it would imply that $w^{-}$ is a balancing vertex in $\Sigma^{-}$ which contradicts the fact that 
its tangled. We therefore conclude that $Y$ is an unbalancing bond in $\Sigma$ .

Since $\Sigma : U^{+}$ is balanced, we have 
\begin{equation} \label{eq_dec_1}
M(\Sigma) \cto (U^{-} \cup Y) \!=\! M(\Sigma \cto (U^{-} \cup Y) ) \!=\! M(\Sigma\cof U^{+}) \!=\! M(\Sigma^{-}) \!=\! M\cto (U^{-} \cup Y).
\end{equation}
$\Sigma : U^{-}$ contains at least one negative cycle, therefore $\Sigma\cof U^{-}$ is a signed graph with all positive edges, and 
half edges in the end-vertices of $Y$. By Proposition~\ref{prop_sgn_graphic} then
\begin{equation} \label{eq_dec_2}
M(\Sigma) \cto (U^{+} \cup Y) \!=\! M(\Sigma \cto (U^{+} \cup Y) ) \!=\! M(\Sigma\cof U^{-}) \!=\! M(G^{+}) \!=\! M\cto (U^{+} \cup Y).
\end{equation}
Finally, given that both $M\cof (U^{+} \cup Y)$ and $M\cof (U^{-} \cup Y)$ are connected, by a similar argument previously in the 
proof, $M(\Sigma)$ is also connected.

Now that we have established a relationship between $M$ and $M(\Sigma)$ given by (\ref{eq_dec_1}) and (\ref{eq_dec_2}),  by using a
matroidal argument we will show that they are in fact equal.
Consider the following family of cocircuits of $M$
\begin{equation*}
\mathbb{C} := \{ X\in\mathcal{C}^{*}(M(\Sigma)) : \exists \; X_{i} \in \mathcal{C}^{*}(M) \; \mbox{such that} \; X = \triangle X_{i} \}
\end{equation*}
where $\triangle$ stands for the symmetric difference of sets. Note that for $X_{1}, X_{2} \in \mathbb{C}$ such that 
$X_{1} \cap X_{2} \neq\emptyset$, since $M(\Sigma)$ is binary we have that $X_{1} \triangle X_{2} \in \mathcal{C}^{*}(M(\Sigma))$ 
and $X_{1} \triangle X_{2} \in \mathbb{C}$. 
\begin{claim} There exists some $X\in \mathcal{C}^{*}(M(\Sigma)) - \mathbb{C}$ such that $X-(X\cap Y)$ is a 
cocircuit of $M(\Sigma)\dof Y$.
\end{claim}
\begin{proof} 
We can assume that for all $X\in \mathcal{C}^{*}(M(\Sigma)) - \mathbb{C}$ we  have $X\cap Y = \emptyset$, since otherwise
by the deletion operation we have that $X-(X\cap Y) \in \mathcal{C}^{*} ( M(\Sigma)\dof Y)$. Choose such an $X$ and assume that
it is not a cocircuit of $M(\Sigma)\dof Y$. Then there exists $T\in \mathcal{C}^{*} ( M(\Sigma) )$ such that $T \subset X\cup Y$, and
since $M(\Sigma)$ is binary $X\triangle T$ is a cocircuit of $M(\Sigma)$. If $X\triangle T$ or $T$ do not belong to $\mathbb{C}$
the result follows. Therefore $(X\triangle T) \triangle T = X$ which implies that $X \in \mathbb{C}$, which is a contradiction.
\renewcommand{\qedsymbol}{} 
\end{proof}
By the above claim and the fact that $U^{-}$ and $U^{+}$ are separators for $M(\Sigma)\dof Y$ by construction, we can conclude that
either $X\subseteq (U^{+}\cup Y)$ or $X\subseteq (U^{-}\cup Y)$.  Therefore since $X\in \mathcal{C}^{*}(M(\Sigma))$ we have that $X$ is either a cocircuit 
of $M(\Sigma) \cto (U^{+}\cup Y) = M\cto (U^{+} \cup Y)$ or a cocircuit of $M(\Sigma) \cto (U^{-}\cup Y) = M\cto (U^{-} \cup Y)$, and a cocircuit of $M$. 
But since $M$ is connected and binary this is 
a contradiction to the fact that $X\notin \mathbb{C}$. So for any cocircuit $X\in \mathcal{C}^{*}(M(\Sigma))$ we have
\[
X = X_{1} \triangle X_{2} \triangle \ldots \triangle X_{n}
\]
for $X_{i} \in \mathcal{C}^{*}(M)$. But in binary matroids the symmetric difference of cocircuits contains a cocircuit or its empty, 
so we can conclude  that there exists some $X' \in \mathcal{C}^{*}(M)$ such that $X' \subseteq X$.

Reversing the above argument we can also state that any cocircuit $X'$ of $M$ contains a cocircuit $X$ of $M(\Sigma)$, 
and by Lemma (2.1.19) in~\cite{Oxley:92} we have that $M = M(\Sigma)$ contradicting our original hypothesis.
\end{proof}

In Theorem~\ref{thrm_decomposition} a binary signed-graphic $M$ is decomposed given that  it contains  non-graphic separating cocircuits. 
If all the cocircuits of $M$ are graphic, then we can apply the excluded minor characterization given in Theorem~\ref{th_ll3}. It remains to be
shown the case of those binary signed-graphic matroids where all non-graphic cocircuits are non-separating. The following result is a 
chain like  theorem which demonstrates that the non-existence of non-graphic and non-separating cocircuits is preserved by the operation
of deletion of a cocircuit. 

\begin{theorem}\label{thrm_llla}
If $M(\Sigma)$ is a binary signed-graphic matroid such that any non-graphic cocircuit $Y\in{\mathcal{C}^{*}(M(\Sigma))}$ is non-separating then
 any non-graphic cocircuit $Y'$ of $M(\Sigma)\backslash{Y}$ is also non-separating.
\end{theorem}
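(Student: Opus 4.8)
The plan is to prove the contrapositive in spirit by working with graphical representations. Since $M(\Sigma)$ is binary and signed-graphic, by Theorem~\ref{th_sliii} we may assume $\Sigma$ is tangled (if $M(\Sigma)$ is graphic the statement is vacuous, as graphic matroids have no non-graphic cocircuits). Let $Y$ be a non-graphic cocircuit of $M(\Sigma)$; by hypothesis $Y$ is non-separating, so $M(\Sigma)\backslash Y$ has a single bridge and $M(\Sigma)\cto(B\cup Y) = M(\Sigma)\backslash Y$ for the unique bridge $B$. By the classification in Section~\ref{subsec_corc_bonds}, $Y$ is a star bond or an unbalancing bond of $\Sigma$ whose core in $\Sigma\backslash Y$ is not a B-necklace; combining non-separability with Theorem~\ref{th_tu84}, we may choose the representation so that $Y$ is the star of a vertex $v$ in a $2$-connected tangled $\Sigma$. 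Then $\Sigma\backslash Y = \Sigma\backslash v$ is connected, $2$-connected, and since $M(\Sigma)\backslash Y$ is not graphic, $\Sigma\backslash v$ must still be tangled (it cannot have a balancing vertex or two disjoint negative cycles, else $M(\Sigma)\backslash Y$ would be graphic by Proposition~\ref{prop_sgn_graphic} or fail to be binary-signed-graphic).

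Next I would take a non-graphic cocircuit $Y'$ of $M(\Sigma)\backslash Y = M(\Sigma\backslash Y)$ and suppose, for contradiction, that it is separating in $M(\Sigma)\backslash Y$. Since $\Sigma\backslash Y$ is tangled, $Y'$ is itself a star bond or unbalancing bond of $\Sigma\backslash Y$ with non-B-necklace core, and $(\Sigma\backslash Y)\backslash Y'$ has at least two separates (bridges), one of which is unbalanced. The goal is to "lift" $Y'$ to a cocircuit $\widehat{Y}$ of the original matroid $M(\Sigma)$ that is non-graphic and separating, contradicting the hypothesis. The natural candidate is $\widehat{Y} = Y' \triangle Z$ for a suitable subset $Z$ of $Y$, or more concretely a bond of $\Sigma$ obtained by "adding back" an appropriate subset of the star edges at $v$ to the bond $Y'$ of $\Sigma\backslash v$: since $Y'$ disconnects $\Sigma\backslash v$ into pieces, and $v$ is attached to several of these pieces via edges of $Y$, one can form a bond of $\Sigma$ by taking $Y'$ together with exactly those $Y$-edges running between the two sides of the partition that $Y'$ induces. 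One must check this is indeed minimal (a bond) and that it increases the number of balanced components of $\Sigma$ appropriately, using the structure of tangled graphs and Theorem~\ref{thrm_bonds}.

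Then I would verify the two needed properties of $\widehat{Y}$. For non-graphicness: $\Sigma\backslash\widehat{Y}$ retains an unbalanced block which is not a B-necklace, because this block already sits inside $(\Sigma\backslash Y)\backslash Y'$ and adding edges of $Y$ back does not destroy it nor create a B-necklace out of it (an unbalanced block stays unbalanced and is not turned into a B-necklace by vertex-expansions, cf. the argument in Lemma~\ref{lem_un1}); hence $M(\Sigma)\backslash\widehat{Y}$ is not graphic by the criterion in Section~\ref{subsec_corc_bonds}. For separating: the bridges of $Y'$ in $M(\Sigma\backslash Y)$ correspond to separates of $(\Sigma\backslash Y)\backslash Y'$, and after adding back only $Y$-edges that connect the two $Y'$-sides, the separates on at least one side remain distinct separates of $\Sigma\backslash\widehat{Y}$ — so $M(\Sigma)\backslash\widehat{Y}$ has more than one bridge, i.e. $\widehat{Y}$ is separating. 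This contradicts the hypothesis that every non-graphic cocircuit of $M(\Sigma)$ is non-separating, completing the proof.

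The main obstacle I anticipate is the bookkeeping in the lifting step: making sure that the set $\widehat{Y}$ constructed from $Y'$ and a carefully chosen subset of the star $Y$ is genuinely a \emph{minimal} edge-cut increasing the balanced-component count (a bond), rather than a proper superset of one, and confirming that its deletion does not accidentally make a previously unbalanced, non-B-necklace block either balanced or a B-necklace. This requires a careful case analysis paralleling Theorems~\ref{thrm_tangled2} and~\ref{thrm_sep81} and Lemma~\ref{lem_un1}, tracking how the attachment vertices of $v$ distribute among the separates of $(\Sigma\backslash Y)\backslash Y'$.
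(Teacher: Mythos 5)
Your proposal is correct and follows essentially the same route as the paper: reduce to the case where $Y$ is the star of a vertex $v$ (forced by non-separability), assume for contradiction that $Y'$ is separating, and lift $Y'$ to a bond $\widehat{Y}=Y'\cup S_1$ of $\Sigma$ by adjoining the star edges from $v$ to the unbalanced side of $(\Sigma\backslash Y)\backslash Y'$, which is then a non-graphic unbalancing (hence separating) cocircuit of $M(\Sigma)$, contradicting the hypothesis. The minimality bookkeeping you flag as an obstacle is handled just as tersely in the paper, and your observation that the balanced side stays balanced (else $\Sigma$ would contain two vertex-disjoint negative cycles) is exactly the needed ingredient.
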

\begin{proof}
By way of contradiction, suppose that $Y'$ is a non-graphic cocircuit of $M(\Sigma)\backslash{Y}=M(\Sigma\backslash{Y})$ which is separating. 
Let $\Sigma'=\Sigma\backslash{Y}$, then, due to the classification of bonds, $Y'$ is an unbalancing bond of $\Sigma'$. Thereby, 
$\Sigma'\backslash{Y'}$ consists of two components $\Sigma_1'$ and $\Sigma_2'$ which are non-empty of edges, where w.l.o.g. suppose that 
$M(\Sigma_1')$ is not graphic. Moreover, either $Y'$ or $\bar{Y}=Y'\cup{S}$, where $S\subset{Y}$, is a cocircuit of $M(\Sigma)$. If $Y'$ was 
a cocircuit of $M(\Sigma)$ then it could not be a star of $\Sigma$ since $Y'$ is not a star in $\Sigma'=\Sigma\backslash{Y}$. Furthermore, since 
graphicity of a matroid is a minor-closed property, $M(\Sigma)\backslash{Y'}$ is not graphic and therefore, $Y'$ is an unbalancing bond of $\Sigma$ 
which implies that $Y'$ is a non-graphic and separating cocircuit of $M(\Sigma)$; a contradiction. 
In the other case, due to the fact that $Y$ is a star bond of $\Sigma$, all the edges in $S$ have a common end-vertex $v$ in $\Sigma$. Let 
$S_1\subset{S}$ be the edges which have their end-vertices other than $v$ in $\Sigma_1$. Then, $\hat{Y}=Y'\cup{S_1}$ is a minimal set of edges 
such that $\Sigma\backslash{\hat{Y}}$ consists of two components, one of which is $\Sigma_1$, where $M(\Sigma_1)$ is not graphic. This implies that 
$\hat{Y}$ is an unbalancing bond of $\Sigma$ and therefore, $\hat{Y}$ is a separating and non-graphic cocircuit of $M(\Sigma)$; a contradiction.      
\end{proof}

Finally, we provide a graphical characterization for those binary signed-graphic matroids where all non-graphic cocircuits are non-separating. 
\begin{theorem}\label{thrm_nomore}
$M(\Sigma)$ is a connected binary signed-graphic matroid such that any non-graphic cocircuit $Y\in{\mathcal{C}^{*}(M(\Sigma))}$ is non-separating 
if and only if $\Sigma$ is a $2$-connected signed graph such that any non-graphic cocircuit $Y\in \mathcal{C}^{*}(M(\Sigma))$ is a star of $\Sigma$ 
and $\Sigma\backslash{Y}$ is $2$-connected.
\end{theorem}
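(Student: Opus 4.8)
The plan is to prove the equivalence in Theorem~\ref{thrm_nomore} by combining the structural results already established — principally Theorem~\ref{th_nero1}, Theorem~\ref{th_sliii}, Theorem~\ref{thrm_tangled2}, Theorem~\ref{th_tu84}, and the chain-like Theorem~\ref{thrm_llla} — together with the classification of bonds in tangled signed graphs from Section~\ref{subsec_corc_bonds}.

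For the ``only if'' direction, suppose $M(\Sigma)$ is connected binary signed-graphic with every non-graphic cocircuit non-separating. If $M(\Sigma)$ is graphic there is a standard graphic representation and little to prove; otherwise, by Theorem~\ref{th_sliii}, $\Sigma$ may be taken tangled, and by Theorem~\ref{th_nero1} it is then $2$-connected. Now let $Y$ be a non-graphic cocircuit. By the classification in Section~\ref{subsec_corc_bonds} together with the fact that graphic cocircuits arise exactly from balancing bonds and bonds whose core is a B-necklace, $Y$ must be a star bond or an unbalancing bond whose core (in $\Sigma\backslash Y$) is not a B-necklace. Since by hypothesis $Y$ is non-separating, its unique bridge gives $M(\Sigma)\cto(B\cup Y)=M(\Sigma)$, so $Y$ has non-overlapping bridges trivially; hence Theorem~\ref{th_tu84} applies and yields a $2$-connected signed graph representation in which $Y$ is the star of a vertex. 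The point requiring care is that we must get \emph{all} non-graphic cocircuits to be stars \emph{simultaneously in one representation}: here I would argue that if $Y$ is already a star bond of $\Sigma$ then $\Sigma\backslash Y$ is $2$-connected because by Theorem~\ref{thrm_tangled2} $\Sigma\backslash Y$ has exactly two components, each of which must be $2$-connected since $\Sigma$ is $2$-connected and $Y$ is a star (so no cut-vertex can survive in the pieces); and an unbalancing non-graphic cocircuit would be separating, contradicting the hypothesis, so in fact \emph{every} non-graphic cocircuit of a tangled $2$-connected $\Sigma$ with the non-separating property is automatically a star. This makes the ``only if'' direction essentially a structural observation once Theorem~\ref{th_tu84} is in hand — but one must double-check that the reversings of Theorem~\ref{th_tu84} needed to turn one unbalancing bond into a star do not destroy the star-property of some other non-graphic cocircuit; I expect this to be the main obstacle, and I would handle it by noting that under the non-separating hypothesis there simply are no unbalancing non-graphic cocircuits to begin with, so no reversing is ever needed.

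For the ``if'' direction, assume $\Sigma$ is $2$-connected with every non-graphic cocircuit $Y\in\mathcal{C}^{*}(M(\Sigma))$ being the star of a vertex with $\Sigma\backslash Y$ still $2$-connected. By Theorem~\ref{th_nero1}, $M(\Sigma)$ is connected. Take any non-graphic cocircuit $Y$; it is a star bond. I must show the (unique) bridge structure is trivial, i.e.\ that $M(\Sigma)\backslash Y$ has exactly one elementary separator, equivalently that $Y$ is non-separating. Using Theorem~\ref{th_Zasl11}, the bridges of $Y$ correspond to the separates of $\Sigma\backslash Y$, which are the outer blocks together with the core (or, if the core is a B-necklace, its constituent blocks); but since $Y$ is a star, $\Sigma\backslash Y$ is obtained from $\Sigma$ by deleting a single vertex $v$, and the hypothesis that $\Sigma\backslash Y$ is $2$-connected forces $\Sigma\backslash Y$ to consist of a single block — hence a single separate, hence a single bridge. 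Therefore $Y$ is non-separating, as required.

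The remaining routine verifications are: that deleting a star-vertex of a $2$-connected graph whose result is again $2$-connected indeed yields a single elementary separator in $M(\Sigma)\backslash Y$ (immediate from Theorem~\ref{th_Zasl11}, after observing that a $2$-connected $\Sigma\backslash Y$ is never a B-necklace here, or if it were, that it would have a balancing vertex contradicting non-graphicity considerations via Proposition~\ref{prop_sgn_graphic}); and that graphic cocircuits are irrelevant to the statement since it only quantifies over non-graphic ones. I would then close by remarking that Theorem~\ref{thrm_llla} guarantees this property is inherited by $M(\Sigma)\backslash Y$, so the graphical characterization is stable under the cocircuit-deletion decomposition, which is the intended role of this theorem within Section~\ref{sec_decomposition}. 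The single genuine difficulty, as noted, is the simultaneity issue in the ``only if'' direction, and the resolution is the observation that the non-separating hypothesis already precludes the only bond type (unbalancing, non-B-necklace core) that fails to be a star.
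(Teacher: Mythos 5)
Your overall route coincides with the paper's: both directions rest on Theorems~\ref{th_sliii}, \ref{th_nero1}, \ref{thrm_tangled2} and~\ref{th_Zasl11}, and your observation that an unbalancing bond whose core is not a B-necklace is automatically separating (so that the non-separating hypothesis forces every non-graphic cocircuit to be a star bond, and no appeal to the reversings of Theorem~\ref{th_tu84} is needed) is exactly the right way to dispose of the ``simultaneity'' worry. Your ``if'' direction is also essentially the paper's argument, including the correct exclusion of the B-necklace case via non-graphicity.

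There is, however, one genuinely false step in your ``only if'' direction: the claim that $\Sigma\backslash Y$ is $2$-connected ``because $\Sigma$ is $2$-connected and $Y$ is a star, so no cut-vertex can survive in the pieces.'' Deleting the star of a vertex from a $2$-connected graph does \emph{not} in general leave a $2$-connected graph (delete a vertex of a $5$-cycle and you are left with a path), so this justification fails. Worse, it proves too much: if it were true, every star bond would automatically be non-separating and the hypothesis would be vacuous in that direction. The correct argument — the one the paper gives — is that $2$-connectivity of $\Sigma\backslash Y$ must be \emph{deduced from} the non-separating hypothesis: since $Y$ is non-separating, $M(\Sigma)\backslash Y=M(\Sigma\backslash Y)$ is connected, i.e.\ has a single elementary separator, and by Theorem~\ref{th_Zasl11} the edge-bearing part of $\Sigma\backslash Y$ therefore consists of a single separate; combined with Theorem~\ref{thrm_tangled2} this single separate is the unique unbalanced block (and not a B-necklace, since otherwise Theorem~\ref{th_Zasl11} would again produce several separators), whence $\Sigma\backslash Y$ is $2$-connected. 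You have all the needed hypotheses available at that point in your proof; you simply do not invoke them, and the substitute you offer is incorrect. With that step repaired, your proof matches the paper's.
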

\begin{proof}
($\Rightarrow$) Since $M(\Sigma)$ is binary and connected, by Theorem~\ref{th_nero1}, $\Sigma$ is tangled and $2$-connected. Furthermore, 
any non-graphic cocircuit of $M(\Sigma)$ is non-separating and therefore, by our classification of bonds, $Y$ is a star bond of $\Sigma$. By 
Theorem~\ref{thrm_tangled2}, $\Sigma\dof{Y}$ must have one unbalanced block. Moreover, $M(\Sigma)\dof{Y}$ is connected since $Y$ is 
non-separating and therefore, by Theorem~\ref{th_Zasl11}, $\Sigma\dof{Y}$ can not be a necklace or contain any other block except for the 
unbalanced one.\\
($\Leftarrow$) $M(\Sigma)$ is non-graphic therefore, by Theorem~\ref{th_sliii}, $\Sigma$  tangled. Furthermore, $\Sigma$ is $2$-connected thus, 
by Theorem~\ref{th_nero1}, $M(\Sigma)$ is connected. Any non-graphic cocircuit $Y$ is such that $\Sigma\dof{Y}$ is $2$-connected; moreover, 
$\Sigma\dof{Y}$ is not a necklace since $Y$ is non-graphic. Thus, by Theorem~\ref{th_Zasl11}, $M(\Sigma)\dof{Y}$ is connected and therefore, 
any non-graphic cocircuit of $M(\Sigma)$ is non-separating.
\end{proof}

We are now ready to see how a binary signed-graphic matroid can be decomposed to graphic matroids and possibly one binary matroid with
no $M^{*}(G_{17})$, $M^{*}(G_{19})$, $F_7$, $F_7^{*}$ minors by the successive deletion of a cocircuit. 
While there exist non-graphic separating cocircuits we simply apply Theorem~\ref{thrm_decomposition}, which dictates that the deletion of
such a cocircuit will result in graphic matroids and one signed-graphic matroid $M(\Sigma)$. If all the non-graphic cocircuits of $M(\Sigma)$
are non-separating then by Theorems~\ref{thrm_llla} and~\ref{thrm_nomore}, it is evident that all these cocircuits will correspond to 
stars in $\Sigma$ and they can be deleted, resulting to either a graphic matroid or a signed-graphic matroid with 
no $M^{*}(G_{17})$, $M^{*}(G_{19})$, $F_7$, $F_7^{*}$ minors.

\bibliographystyle{plain}

\begin{thebibliography}{10}

\bibitem{Arch:81}
D.~Archdeacon.
\newblock A {K}uratowski theorem for the projective plane.
\newblock {\em Journal of Graph Theory}, 5(3):243--246, 1981.

\bibitem{BixCunn:79}
R.E. Bixby and W.H. Cunningham.
\newblock Matroids, graphs, and 3-connectivity.
\newblock In J.A. Bondy and U.S.R. Murty, editors, {\em Graph Theory And
  Related Topics}, pages 91--103. Academic Press, 1979.

\bibitem{Diestel:05}
R.~Diestel.
\newblock {\em Graph Theory}.
\newblock Graduate Texts in Mathematics. Springer, 2005.

\bibitem{Gerards:90}
A.M.H. Gerards.
\newblock {\em Graphs and polyhedra. Binary spaces and cutting planes}.
\newblock CWI Tract vol. 73. Centrum voor Wiskunde en Informatica, Amsterdam,
  1990.

\bibitem{Hlileny:07}
P.~Hlileny.
\newblock {MACEK} 1.2+ {MA}troids {C}omputed {E}fficiently {K}it, 2007.
\newblock http://www.fi.muni.cz/$\sim$hlineny/MACEK/.

\bibitem{MoTh:01}
B.~Mohar and C.~Thomassen.
\newblock {\em Graphs on Surfaces}.
\newblock The John Hopkins University Press, 2001.

\bibitem{Oxley:92}
J.G. Oxley.
\newblock {\em Matroid Theory}.
\newblock Oxford University Press, 1992.

\bibitem{Pagano:1998}
S.R. Pagano.
\newblock {\em Separability and representability of bias matroids of signed
  graphs}.
\newblock PhD thesis, Binghampton University, 1998.

\bibitem{Seymour:1980}
P.D. Seymour.
\newblock Decomposition of regular matroids.
\newblock {\em Journal of Combinatorial Theory Series B}, 28:305--359, 1980.

\bibitem{Slilaty:2005b}
D.~Slilaty.
\newblock On cographic matroids and signed-graphic matroids.
\newblock {\em Discrete Mathematics}, 301:207--217, 2005.

\bibitem{Slilaty:06}
D.~Slilaty.
\newblock Bias matroids with unique graphical representations.
\newblock {\em Discrete Mathematics}, 306:1253--1256, 2006.

\bibitem{Slilaty:07}
D.~Slilaty.
\newblock Projective-planar signed graphs and tangled signed graphs.
\newblock {\em Journal of Combinatorial Theory Series B}, 97:693--717, 2007.

\bibitem{SliQin:07}
D.~Slilaty and H.~Qin.
\newblock Decompositions of signed-graphic matroids.
\newblock {\em Discrete Mathematics}, 307:2187--2199, 2007.

\bibitem{SliQinZh:09}
H.~Qin and D.~Slilaty and X.~Zhou.
\newblock The regular excluded minors for signed-graphic matroids.
\newblock {\em Combinatorics, Probability and Computing}, 18:953--978, 2009.

\bibitem{Truemper:98}
K.~Truemper.
\newblock {\em Matroid Decomposition}.
\newblock Leibniz, 1998.

\bibitem{Tutte:1959}
W.T. Tutte.
\newblock Matroids and graphs.
\newblock {\em Transactions of the American Mathematical Society}, 90:527--552,
  1959.

\bibitem{Tutte:1960}
W.T. Tutte.
\newblock An algorithm for determining whether a given binary matroid is
  graphic.
\newblock {\em Proceedings of the American Mathematical Society}, 11:905--917,
  1960.

\bibitem{Tutte:1965}
W.T. Tutte.
\newblock Lectures on matroids.
\newblock {\em Journal of Research of the National Bureau of Standards (B)},
  69:1--47, 1965.

\bibitem{Tutte:98}
W.T. Tutte.
\newblock {\em Graph Theory As I Have Known It.}
\newblock Oxford University Press, 1998.

\bibitem{Tutte:01}
W.T. Tutte.
\newblock {\em Graph Theory}.
\newblock Cambridge University Press, 2001.

\bibitem{Whittle:05}
G.~Whittle.
\newblock Recent work in matroid representation theory.
\newblock {\em Discrete Mathematics}, 302:285--296, 2005.

\bibitem{Zaslavsky:1982}
T.~Zaslavsky.
\newblock Signed graphs.
\newblock {\em Discrete Applied Mathematics}, 4:47--74, 1982; 
\newblock Erratum: {\em Discrete Applied Mathematics},5:248, 1983.

\bibitem{Zaslavsky:1990}
T.~Zaslavsky.
\newblock Biased graphs whose matroids are special binary matroids.
\newblock {\em Graphs and Combinatorics}, 6:77--93, 1990.

\bibitem{Zaslavsky:1991a}
T.~Zaslavsky.
\newblock Biased graphs. {II}. {T}he three matroids.
\newblock {\em Journal of Combinatorial Theory Series B}, 51:46--72, 1991.

\end{thebibliography}

\end{document}